\newtheorem{thrm}{Theorem}[section]
\newtheorem{lemma}[thrm]{Lemma}
\newtheorem{prop}[thrm]{Proposition}
\newtheorem{cor}[thrm]{Corollary}
\newtheorem{dfn}[thrm]{Definition}
\newtheorem{rmrk}[thrm]{Remark}
\newtheorem{conv}[thrm]{Convention}
\newtheorem{exam}[thrm]{Example}
\newcommand{\newsection}{    
\setcounter{equation}{0}\section}
\def\appendix#1{\addtocounter{section}{1}\setcounter{equation}{0}
\renewcommand{\thesection}{\Alph{section}}
\section*{Appendix \thesection\protect\indent \parbox[t]{11.15cm}{#1}}
\addcontentsline{toc}{section}{Appendix \thesection\ \ \ #1}}
\newcommand{\be}{\begin{eqnarray}}
\newcommand{\ee}{\end{eqnarray}}
\newcommand{\bea}{\begin{eqnarray}}
\newcommand{\eea}{\end{eqnarray}}
\newcommand{\ba}{\begin{array}}
\newcommand{\ea}{\end{array}}
\def\sb {{\nabla}}
\def\LC{{\nabla^g}}
\def\p{{\varphi}}
\def\ph{{\Phi}}
\def\ps{{\Psi}}
\begin{document}
\begin{center}
\vspace*{-1.0cm}


\vspace{1.2 cm} {\Large \bf  Parallel  torsion and $G_2, Spin(7)$ instantons
} 

\vspace{1cm}
 {\large Stefan~ Ivanov${}^{1,2}$,  Alexander~Petkov$^1$ and Luis~Ugarte$^3$}

\vspace{0.5cm}

${}^1$ University of Sofia, Faculty of Mathematics and
Informatics,\\ blvd. James Bourchier 5, 1164, Sofia, Bulgaria

\vspace{0.5cm}
${}^2$ Institute of Mathematics and Informatics,
Bulgarian Academy of Sciences,\\
acad. Georgi Bonchev str. bl.8, 1113 Sofia, Bulgaria

\vspace{0.5cm}
${}^3$ Departamento de Matem\'aticas\,-\,I.U.M.A., 
Universidad de Zaragoza\\
Campus Plaza San Francisco, 
50009 Zaragoza, Spain

\vspace{0.5cm}

\end{center}

\begin{abstract}
Instanton properties of the characteristic connection $\sb$ on an integrable $G_2$ manifold as well as instanton condition of the torsion connection $\sb$ on a $Spin(7)$ manifold  are investigated.  
It is shown that for an integrable $G_2$ manifold with $\sb$-parallel Lee form the curvature of the characteristic connection is a $G_2$ instanton exactly when the torsion 3-form  is $\sb$-parallel. It is observed that  on   a compact $Spin(7)$ manifold with $\sb$ closed  torsion 3-form the torsion connection is a $Spin(7)$ instanton if and only if the torsion 3-form is parallel with respect to the torsion connection.

\medskip

AMS MSC2010: 53C55, 53C21, 53D18, 53Z05
\end{abstract}


\tableofcontents

\setcounter{section}{0}
\setcounter{subsection}{0}



\newsection{Introduction}
Riemannian manifolds with metric connections having totally skew-symmetric torsion and special holonomy received a lot of interest in mathematics and theoretical physics mainly from supersymmetric string theories and supergravity.  The main reason becomes from the Hull-Strominger system which describes the supersymmetric background in heterotic string theories \cite{Str,Hull}. The number of preserved supersymmetries depends on the number  of  parallel spinors with respect to a metric connection $\sb$ with totally skew-symmetric torsion $T$. 
The existence  of a $\nabla$-parallel spinor leads to a restriction of the
holonomy group $Hol(\nabla)$ of the torsion connection $\nabla$.
Namely, $Hol(\nabla)$ has to be contained in $SU(n), Sp(n), G_2, Spin(7)$. 
A detailed analysis of the possible geometries is
carried out in \cite{GMW}. 


In dimension 7 one has to consider a $G_2$ structure. Necessary and sufficient  conditions for a $G_2$ structure  $\p$ to admit a metric connection with torsion 3-form preserving the $G_2$ structure are found in  \cite{FI}, namely the $G_2$ structure has to be integrable,  i.e. $d*\p=\theta\wedge*\p$, where $\theta$ is the Lee form defined below in \eqref{g2li}  and $*$ denotes the Hodge star operator of the Riemannian metric induced by $\p$ (see also \cite{GKMW,FI1,GMW,GMPW,II}). The $G_2$ connection constructed in \cite[Theorem~4.8]{FI} is unique and it is called  \emph{the characteristic connection}. 

From the point of view of physics, 
the compactification of the physical theory leads to the study of models of the form $N^k\times M^{10-k}$, where $N^k$ is a $k$-dimensional Lorentzian manifold and $M^{10-k}$ is a Riemannian spin manifold which encodes the extra dimensions of a supersymmetric vacuum. 
 For application to dimension 7, 
  the integrable $G_2$ structure should be \emph{strictly integrable}, i.e. the scalar product $(d\p,*\p)=0$,  and the Lee form $\theta$ has to be an exact form representing the dilaton, \cite{GKMW}.   It should be mentioned that strictly integrable  $G_2$ structure with an exact Lee form enforce $N=R^{1,2}$
in the compactification. A different compactification ansatz, with $N$ anti-de Sitter space-time, leads to a more general class of solutions with $(d\p,*\p)=\lambda=const.$ \cite{OLS}. The constant  $(d\p,*\p)$ is interpreted as the AdS radius \cite{Oss1,Oss2} ,  \cite[Section~5.2.1]{AMP}. We call this class \emph{integrable $G_2$ structure of constant type} \cite{IS1}.  

In dimension 8,  one has to deal with  a $Spin(7)$ structure. It is shown in \cite[Theorem~1]{I1} that any $Spin(7)$-manifold admits a unique metric connection with totally skew-symmetric torsion  preserving the $Spin(7)$-structure, i.e. there always exists a parallel spinor with respect to the metric connection with torsion 3 form (see also \cite{Fr,Mer} for another proof of this fact). 



 For application to the heterotic string theory in dimension eight, 
 the $Spin(7)$-manifold should be compact 
 and globally conformally balanced which means that the Lee form $\theta$  defined below in \eqref{sg2li} must be an exact form, $\theta=df$ for a smooth function $f$  which represents the dilaton  \cite{GKMW,GMW,GMPW,MSh}.

The Hull-Strominger system in dimension seven, \cite{CGFT,Oss2} (resp. eight) is known as  the $G_2$-Strominger system (resp. the $Spin(7)$-Strominger system).
 It  consists of the supersymmetry  equations and the anomaly cancellation condition. The latter expressed the exterior derivative of the 3-form torsion in terms of a difference of the first Pontrjagin forms of a $G_2$ instanton (resp. $Spin(7)$ instanton) connection on an auxiliary vector bundle and a connection on the tangent bundle. The extra requirements for a solution of the supersymmetry equations  and the anomaly cancellation condition to provide  a supersymmetric vacuum of the theory is given by the $G_2$ instanton (resp. $Spin(7)$ instanton) condition on the connection on the tangent bundle \cite{Iv} (see also \cite{MS,XS}). The $G_2$ instanton (resp. $Spin(7)$ instanton) condition means that the curvature 2-form $R$ of a connection $\sb$ belongs to the Lie algebra $\frak{g}_2$ (resp. Lie algebra $\frak{spin}(7)$ of the Lie group $G_2$ (resp. $Spin(7)$. In general, 
 Hull \cite{Hull}  used the more physically accurate Hull connection   to define
the first Pontrjagin form on the tangent bundle. However, this choice leads to a system of equations, which is not
mathematically closed: e.g. the curvature of the Hull connection is only an  instanton modulo higher order  corrections, see \cite{MS}.

Compact solutions to the $G_2$-Strominger system (resp. to the $Spin(7)$-Strominger system) are constructed in \cite{FIUVdim7-8} with connection on the tangent bundle 
taken as the characteristic connection (resp.  the torsion connection 
of the $Spin(7)$-structure). 
Furthermore, for some of the solutions found on the product $H^5\times T^2$ of the 5-dimensional Heisenberg nilmanifold $H^5$ by the 2-torus, and on  the 7-dimensional generalized Heisenberg nilmanifold $H^7$ (resp. on non-trivial $Spin(7)$ extensions of $H^7$), the 
connection is a $G_2$-instanton (resp. a $Spin(7)$-instanton), thus providing supersymmetric vacua of the theory in dimensions 7 and 8. 

In the case of torsion-free $G_2$-structures, $G_2$-instantons on compact and non-compact manifolds are constructed 
in \cite{Clarke, SW, W} by using different methods, 
and more recently for $G_2$-structures of several non-zero torsion types \cite{BO,LO, Waldron}.

 It is also well known that on a compact $G_2$ manifold  (resp. $Spin(7)$ manifold)  a connection $\sb$ on the tangent bundle with curvature 2-form $R$ is an absolute minimum of the Yang-Mills functional with torsion $YM_T=\int_M||R||^2vol-\int_Mtr(R\wedge R)vol$ if and only if it is a  $G_2$ instanton (resp. $Spin(7)$ instanton).

 The main purpose of the paper is to develop   the $G_2$ instanton  condition of the characteristic connection on a 7-dimensional integrable $G_2$ manifold and the $Spin(7)$ instanton of the torsion connection on an eight dimensional  $Spin(7)$ manifold.  
 
 It is known from  \cite[Lemma~3.4]{I} that the curvature $R$ of a metric connection $\sb$ with  torsion 3-form $T$ is symmetric in exchanging the first and the second pairs,  $R\in S^2\Lambda^2$, 
 if and only if the covariant derivative of the torsion with respect to the torsion connection  is a 4-form, $\sb T\in \Lambda^4$, i.e. the torsion $T$ is a Killing 3-form. If  the holonomy group of a metric connection with torsion 3-form lies in $\frak{g}_2$ (resp. $\frak{spin}(7)$), the condition $\sb T\in \Lambda^4$ implies that the curvature is a $G_2$ instanton (resp.  $Spin(7)$ instanton). In particular, if the torsion is parallel with respect to this connection then its  curvature is an instanton.
 
The main object of interest in the paper is to investigate when the converse statement holds, namely, when the $G_2$ or $Spin(7)$ instanton condition implies the torsion is parallel.  We note that $G_2$ and $Spin(7)$ instanton manifolds with Killing torsion form   are investigated in \cite{IS1} and \cite{Iap}, respectively. In this note we dropped the Killing torsion condition. 
 
In the $G_2$ case, we show the following

\begin{thrm}\label{instm}
Let $(M,\p)$ be  an integrable $G_2$ manifold with $\sb$-parallel Lee form and the  curvature of the characteristic  connection $\sb$   is a $G_2$-instanton, i.e.
\[d*\p=\theta\wedge*\p, \qquad \sb\theta=0, \quad R\in \frak{g}_2\otimes\frak{g}_2.\]
Then the torsion 3-form is parallel with respect to the characteristic connection, $\sb T=0$.

In particular, the $G_2$ manifold is of constant type, the characteristic Ricci tensor is symmetric, $\sb$-parallel  and $\sb dT=0$.
\end{thrm}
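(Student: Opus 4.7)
My plan is to decompose the torsion $3$-form along the $G_2$-irreducible splitting $\Lambda^3 = \Lambda^3_1 \oplus \Lambda^3_7 \oplus \Lambda^3_{27}$, writing $T = T_1 + T_7 + T_{27}$, where $T_1$ is proportional to $\p$ with coefficient a multiple of $(d\p,*\p)$, $T_7 = -\tfrac{1}{3}*(\theta\wedge\p)$, and $T_{27}\in\Lambda^3_{27}$. Because $\sb\p=0$ and, by hypothesis, $\sb\theta=0$, we obtain $\sb T_7 = 0$, and since $\sb$ preserves the $G_2$-decomposition, $\sb T_k \in \Lambda^1 \otimes \Lambda^3_k$ for each $k$. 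The problem thus reduces to showing $\sb T_1 = 0$ (equivalently, $(d\p,*\p)$ locally constant, giving constant type) and $\sb T_{27} = 0$.

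The central algebraic tool is \cite[Lemma~3.4]{I}: for a metric connection with skew torsion, $R \in S^2\Lambda^2$ is equivalent to $\sb T \in \Lambda^4$. My first substantive step is to derive $R \in S^2\Lambda^2$ from the $G_2$-instanton hypothesis $R\in\frak{g}_2\otimes\frak{g}_2$. I would apply the first Bianchi identity for a skew-torsion connection, schematically
\[
\mathfrak{S}_{X,Y,Z}\, R(X,Y,Z,W) = (\sb_W T)(X,Y,Z) + \sigma_T(X,Y,Z,W) - dT(X,Y,Z,W),
\]
and use the $G_2$-algebraic identity $*(\alpha\wedge\p) = -\alpha$ for $\alpha\in\frak{g}_2$, together with $\mathrm{Hol}(\sb)\subset G_2$. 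Contracting the Bianchi sum against $\p$ and $*\p$ should force the antisymmetric-in-pairs part of $R$ to vanish, hence $R\in S^2\Lambda^2$ and $\sb T \in \Lambda^4$.

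With $\sb T \in \Lambda^4$ established and $\sb T_7 = 0$, the next step is a $G_2$-representation-theoretic analysis. Each $\sb T_k$ lives in $\Lambda^1 \otimes \Lambda^3_k$, while $\Lambda^4$, viewed inside $\Lambda^1 \otimes \Lambda^3$, has a specific isotypic decomposition; matching this with the decomposition of $\Lambda^1 \otimes \Lambda^3_1 \cong V_7$ and of $\Lambda^1 \otimes \Lambda^3_{27}$ (which contains, among others, components isomorphic to $V_7$ and $V_{27}$), together with additional Bianchi-type identities obtained by projecting $\sb R$ onto its various $G_2$-isotypic pieces, should force $\sb T_1 = 0$ and $\sb T_{27} = 0$. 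I expect the second Bianchi identity, contracted with $\p$ and again combined with $R\in\frak{g}_2\otimes\frak{g}_2$, to furnish the extra algebraic constraints needed to rule out the allowed residual components.

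Once $\sb T = 0$ is proved, the remaining conclusions follow formally: the identity $4\,\mathrm{Alt}(\sb T) = dT - \sigma_T$ combined with $\sb T = 0$ gives $dT = \sigma_T$, and since $\sigma_T$ is polynomial in the now-parallel $T$, we obtain $\sb dT = 0$. Symmetry of the characteristic Ricci tensor follows from the standard formula for its antisymmetric part in terms of $\delta T$, which vanishes when $\sb T = 0$, and $\sb\mathrm{Ric}^\sb = 0$ follows by contracting the second Bianchi identity. The main obstacles I foresee are the two applications of the instanton hypothesis: first, to upgrade it to the pair-symmetry of $R$, which requires careful bookkeeping of the quadratic torsion term $\sigma_T$; and second, more delicately, to eliminate the residual components of $\sb T_{27}$ that purely linear algebra alone would allow in $\Lambda^4$.
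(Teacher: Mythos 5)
Your proposal has two genuine gaps, both at the places you yourself flag as the ``main obstacles.'' First, the step ``derive $R\in S^2\Lambda^2$ from the instanton hypothesis'' by contracting the first Bianchi identity against $\p$ and $*\p$ will not go through. The pair-antisymmetric part $A(X,Y,Z,V)=R(X,Y,Z,V)-R(Z,V,X,Y)$ lies, under the instanton hypothesis, in $\Lambda^2\frak{g}_2$ (a $91$-dimensional space), and by the identity $2A(X,Y,Z,V)=(\sb_XT)(Y,Z,V)-(\sb_YT)(X,Z,V)-(\sb_ZT)(X,Y,V)+(\sb_VT)(X,Y,Z)$ it is equivalent to the failure of $\sb T$ to be totally skew. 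Contractions with $\p$ and $\ph$ only capture the $\Lambda^3_1\oplus\Lambda^3_7$ (trace-type) components of the tensors involved and cannot see the $\Lambda^3_{27}$-valued parts of $\sb T$, so they cannot force $A=0$. Tellingly, the paper never establishes $R\in S^2\Lambda^2$ at any intermediate stage; it obtains it only a posteriori from $\sb T=0$. What the paper proves instead is the \emph{complementary} statement $d^{\sb}T=4\,\mathrm{Alt}(\sb T)=0$: using $\sb\theta=0$ one shows $d\lambda=0$, then that $X\lrcorner d^{\sb}T$ is orthogonal to both $\p$ and $\alpha\lrcorner\ph$, hence lies in $\Lambda^3_{27}$ for every $X$, and Proposition~\ref{4-for} kills the $4$-form.

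Second, even if you had $\sb T\in\Lambda^4$, your plan to finish by matching isotypic components of $\Lambda^1\otimes\Lambda^3_k$ against $\Lambda^4$ cannot close the argument, because $\Lambda^4\cong\Lambda^3_1\oplus\Lambda^3_7\oplus\Lambda^3_{27}$ shares irreducible summands with $\Lambda^1\otimes\Lambda^3_{27}$ (and with $\Lambda^1\otimes\Lambda^3_1\cong V_7$), so linear algebra leaves exactly the residual components you worry about, and you give no concrete mechanism for eliminating them. The paper's actual closing device is analytic, not representation-theoretic: from the second Bianchi identity contracted with $\ph$ one gets $\sb_iR_{iplm}=\theta_rR_{rplm}$, and pairing this with $T_{plm}$ (using $d^{\sb}T=0$, $dT=2\sigma^T$ and $\sigma^T_{ijkl}T_{ijk}=0$) yields the pointwise identity $\Delta||T||^2-\sb_{\theta}||T||^2=-2||\sb T||^2$; a separate computation with the contracted second Bianchi identity \eqref{e1} shows $d||T||^2=0$, whence $||\sb T||^2=0$. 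This norm identity, or some substitute for it, is indispensable and is absent from your outline. (A minor point: your formula $4\,\mathrm{Alt}(\sb T)=dT-\sigma_T$ should read $dT-2\sigma^T$ in the paper's normalization, so $\sb T=0$ gives $dT=2\sigma^T$, not $dT=\sigma^T$.)
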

The main observation in the proof of the theorem is Proposition~\ref{instm1} which says that under the conditions of the theorem the four form $$d^{\sb}T=4Alt(\sb T)=0,$$ where $Alt(\sb T)$ stand for the alternation of $\sb T$ (see \eqref{dh} below). 

In terms of $dT$ and the four form $\sigma^T$ introduced below in \eqref{sigma}, the condition $d^{\sb}T=0$ is equivalent to $dT=2\sigma^T$ (see \eqref{inst6} below). Note, that if $\sb T=0$ then automatically $d^{\sb} T=0$ and $dT=2\sigma^T$.


With the help of Theorem~\ref{instm} we improve \cite[Theorem~1.1]{IS1} as follows
\begin{thrm}\label{instqjm}
Let $(M,\p)$ be  an integrable $G_2$ manifold of constant type and the  curvature of the characteristic  connection $\sb$   is a Ricci flat  $G_2$-instanton, i.e.
\[d*\p=\theta\wedge*\p \qquad (d\p,*\p)=const.,\quad R\in \frak{g}_2\otimes\frak{g}_2, \quad Ric=0.\]
Then the torsion 3-form is parallel with respect to the Levi-Civita connection and to the characteristic connection, $\LC T=\sb T=0.$ 

In particular, the torsion 3-form  is harmonic, $\delta T=dT=0$ and the  4-form $\sigma^T=0$. 
\end{thrm}

Since on a co-calibrated $G_2$ manifold the Lee form vanishes, $\theta=0$, Theorem~\ref{instm} implies
\begin{cor}\label{instco}
Let $(M,\p)$ be  a  co-calibrated $G_2$ manifold  and the  curvature of the characteristic  connection $\sb$   is a $G_2$-instanton, i.e.
\[d*\p=0, \quad R\in \frak{g}_2\otimes\frak{g}_2.\]
Then the torsion 3-form is parallel with respect to the characteristic connection, $\sb T=0$.
\end{cor}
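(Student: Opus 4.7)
The plan is to verify that a co-calibrated $G_2$ manifold automatically satisfies every hypothesis of Theorem~\ref{instm}, whereupon the conclusion follows directly. In other words, the corollary is a pure specialization and no new computation is needed beyond checking that the three assumptions of the theorem are in force.

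First I would observe that on a 7-manifold the wedge map $\theta \mapsto \theta\wedge *\p$ from $\Lambda^1$ to $\Lambda^5$ is injective (indeed a bijection), since $*\p$ is a nowhere vanishing 4-form and the associated pairing between 1-forms and 4-forms is non-degenerate. Consequently, the co-calibrated condition $d*\p = 0$ forces the Lee form $\theta$, defined via \eqref{g2li} as the unique 1-form for which $d*\p = \theta\wedge *\p$ in the integrable setting, to vanish identically. In particular $(M,\p)$ is an integrable $G_2$ manifold with $\theta \equiv 0$, so the integrability hypothesis of Theorem~\ref{instm} is met.

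Second, with $\theta \equiv 0$ the $\sb$-parallelism $\sb\theta = 0$ is automatic. The remaining hypothesis, $R \in \frak{g}_2\otimes\frak{g}_2$, is exactly the $G_2$-instanton condition assumed in the corollary. All three hypotheses of Theorem~\ref{instm} therefore hold, and I would apply that theorem verbatim to conclude $\sb T = 0$.

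There is no substantive obstacle at this stage: the entire content resides in Theorem~\ref{instm}, and the only elementary point to record is that co-calibration forces the Lee form to vanish on a 7-manifold via the injectivity of wedging with $*\p$. Accordingly, the proof is a one-line invocation of the preceding theorem.
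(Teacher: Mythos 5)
Your proposal is correct and matches the paper's own derivation: the paper obtains Corollary~\ref{instco} precisely by noting that co-calibration ($d*\p=0$) forces the Lee form to vanish, so $\sb\theta=0$ holds trivially and Theorem~\ref{instm} applies verbatim. Your additional remark on the injectivity of $\theta\mapsto\theta\wedge*\p$ is a correct (and slightly more explicit) justification of the step the paper takes for granted.
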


Integrable $G_2$ structures  with parallel torsion 3-form with respect to the characteristic connection are investigated in  \cite{F,AFer,CMS}   and a large number of examples are given there.  
In the case of left-invariant $G_2$-structures on Lie groups, a classification of $2$-step nilpotent Lie groups and co-calibrated $G_2$-structures on them for which the characteristic connection 
satisfies the $G_2$-instanton condition 
is obtained in \cite{CdBM}. From this classification it follows that the $G_2$-instantons given in \cite{FIUVdim7-8} 
are the only ones of purely co-calibrated type (i.e. $(d\p,*\p)=0=d*\p$) in the class of $2$-step nilpotent Lie groups. 
It is also proved in \cite[Theorem 1.2]{CdBM} that for left-invariant co-calibrated $2$-step nilpotent Lie groups, the $G_2$-instanton condition implies $\nabla T=0$, so our Corollary~\ref{instco} 
provides an extension of this result to any co-calibrated $G_2$ manifold. 

Integrable $G_2$ structures  with parallel torsion 3-form with respect to the characteristic connection are recently classified,  up to naturally reductive homogeneous spaces and nearly parallel $G_2$ structures, in\cite{MSem}.

Note that integrable $G_2$ structures  with $\sb$-parallel torsion 3-form  have co-closed Lee form. More general,   due to  \cite[Theorem~3.1]{FI1},  for any  integrable $G_2$ structure  on a compact manifold there exists an unique integrable $G_2$ structure conformal to the original one with co-closed Lee form,  called \emph{the Gauduchon $G_2$ structure}.

In the compact case we prove 
\begin{thrm}\label{cmpga}
Let $(M,\p)$ be a compact integrable $G_2$ manifold of constant type with a Gauduchon $G_2$ structure, $\delta\theta=0$.  

The characteristic connection is a $G_2$-instanton if and only if the torsion 3-form is parallel with respect to the characteristic connection, $\sb T=0$. 
\end{thrm}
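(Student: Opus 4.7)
The ``only if'' direction is the substantive one. In the easy direction, $\nabla T=0$ gives $\nabla T\in\Lambda^4$ trivially, so by the observation quoted just before Theorem~\ref{instm} the curvature $R$ lies in $S^2\Lambda^2$; combined with $\mathrm{Hol}(\nabla)\subset\mathfrak{g}_2$ this forces $R\in\mathfrak{g}_2\otimes\mathfrak{g}_2$, i.e.\ the $G_2$-instanton condition.

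For the converse the strategy is to reduce to Theorem~\ref{instm}, i.e.\ to upgrade the Gauduchon hypothesis $\delta\theta=0$ to parallelism $\nabla\theta=0$, and then quote Theorem~\ref{instm} to obtain $\nabla T=0$. The constant-type hypothesis $(d\varphi,*\varphi)=\lambda=\mathrm{const}$ is what makes this upgrade feasible: together with the integrability relation $d*\varphi=\theta\wedge*\varphi$, it allows one to express the torsion $3$-form $T$ as a $G_2$-equivariant polynomial in $\varphi$, $*\varphi$, $\theta$ and the constant $\lambda$. Since $\nabla\varphi=0$, covariantly differentiating this expression yields a pointwise identity of the form $\nabla T = \mathcal{L}(\nabla\theta)$, where $\mathcal{L}$ is an explicit $G_2$-equivariant first-order operator whose kernel is controlled by the representation-theoretic decomposition of $T^*M\otimes T^*M$ under $G_2$.

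The core of the argument is then a Bochner/Stokes computation on the compact manifold $M$. I would form the global $L^2$-norm $\int_M |\nabla\theta|^2\,dV$ and rewrite it via integration by parts, using $\delta\theta=0$ to kill the ``trace'' terms that would otherwise appear, and using the instanton condition $R\in\mathfrak{g}_2\otimes\mathfrak{g}_2$ to replace the curvature commutator terms arising in the Weitzenb\"ock formula by expressions involving $T$ and $\nabla T$ alone. Combining with the identity $\nabla T=\mathcal{L}(\nabla\theta)$ from the previous step, one should obtain an inequality of the shape $\int_M|\nabla\theta|^2\,dV\le 0$, forcing $\nabla\theta=0$. At that point Theorem~\ref{instm} applies directly and gives $\nabla T=0$, together with the statements about the Ricci tensor and $\nabla dT=0$.

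The main obstacle will be the Bochner step: one has to carefully match the representation-theoretic pieces of $\nabla\theta$ (lying in $T^*M\otimes T^*M = \mathbb{R}\oplus\mathfrak{g}_2\oplus\mathfrak{g}_2^\perp\oplus S^2_0 T^*M$) against the curvature contribution, using the fact that the instanton condition kills precisely the $\Lambda^2_7$ component of $R$, and then check that all terms not manifestly non-negative are exact divergences so that Stokes eliminates them. The constant-type assumption is essential here, since any gradient of $\lambda$ would spoil the integration by parts by producing an uncontrolled term; this is the reason Theorem~\ref{cmpga} is stated under both the Gauduchon and constant-type hypotheses.
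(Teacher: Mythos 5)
Your top-level strategy coincides with the paper's: first upgrade $\delta\theta=0$ to $\sb\theta=0$ using compactness, the instanton condition and constant type, and then invoke Theorem~\ref{instm} (the paper routes this through Theorem~\ref{RG1}, Proposition~\ref{instm1} and Theorem~\ref{maing2}). The easy direction is also handled the same way. However, the mechanism you propose for the key step contains a genuine error. You claim that integrability plus constant type lets you write $T$ as a $G_2$-equivariant polynomial in $\p$, $*\p$, $\theta$ and $\lambda$, and hence that $\sb T=\mathcal{L}(\sb\theta)$ pointwise. This is false: an integrable $G_2$ structure has intrinsic torsion in $W_1\oplus W_7\oplus W_{27}$, and the $\Lambda^3_{27}$-component of $T$ is independent data not determined by $\theta$ (the $W_7$ part) and $\lambda$ (the $W_1$ part). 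The identity \eqref{torcy2} is only a linear \emph{constraint} on $T$, not a formula for $T$ in terms of $\theta$ and $\lambda$. If your identity $\sb T=\mathcal{L}(\sb\theta)$ held, then $\sb\theta=0$ would immediately give $\sb T=0$ and Theorem~\ref{instm} would be trivial, whereas its proof in the paper requires the substantive Propositions~\ref{instm1} and~\ref{instm2}.

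The Bochner step is also left at the level of ``one should obtain an inequality of the shape $\int_M|\sb\theta|^2\le 0$,'' and the ingredients that actually make it work are not identified. In the paper these are: the divergence identity $\sb_i\delta T_{ij}=\frac12\delta T_{ia}T_{iaj}$; the formula $\delta T=d^{\sb}\theta-d\lambda\lrcorner\p$ together with $d\lambda=0$ from constant type; Lemma~\ref{inst1} (the instanton condition forces $\delta T\in\Lambda^2_{14}$), which is what kills the term $\sigma^T_{abci}\ph_{abcj}\theta_i\theta_j$; and the resulting computation $Ric(\theta,\theta)=-\tfrac32\theta_i\sb_i\|\theta\|^2$, which turns the Weitzenb\"ock identity \eqref{gafinf} into $\Delta\|\theta\|^2+3\theta_i\sb_i\|\theta\|^2=-2\|\sb\theta\|^2\le 0$ and allows the strong maximum principle (or integration against $\delta\theta=0$) to conclude $\sb\theta=0$. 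Your correct observation that a nonzero $d\lambda$ would obstruct the argument is the one place where the constant-type hypothesis genuinely enters, but as written the proposal does not constitute a proof: the pointwise identity it rests on is wrong, and the curvature term in the Bochner formula is not controlled.
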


We also observe in Theorem~\ref{g2clost} that on a compact integrable $G_2$ manifold with closed torsion, $dT=0$, the characteristic connection is a $G_2$ instanton if and only if the torsion is parallel with respect to the characteristic connection.

For $Spin(7)$ manifold we  show the following

\begin{thrm}\label{main2}
Let $(M,\p)$ be  a compact  $Spin(7)$ manifold. 

The  curvature of the torsion  connection $\sb$   is a $Spin(7)$-instanton and $d^{\sb}T=0$, i.e.
\[R\in \frak{spin}(7)\otimes\frak{spin}(7), \qquad dT=2\sigma^T , \] 
if and only if the torsion 3-form is parallel with respect to the torsion connection, $\sb T=0$.

In particular, the  Ricci tensor of the torsion connection is symmetric, $\sb$-parallel  and $\sb dT=0$.
\end{thrm}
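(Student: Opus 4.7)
For the ``if'' direction, assume $\nabla T = 0$. The alternation identity $d^{\nabla}T = 4\,\mathrm{Alt}(\nabla T)$ immediately yields $d^{\nabla}T = 0$, equivalently $dT = 2\sigma^T$. Moreover $\nabla T \in \Lambda^4$ trivially, so by \cite[Lemma~3.4]{I} the curvature satisfies $R \in S^2\Lambda^2$; combined with $\mathrm{Hol}(\nabla) \subset \mathfrak{spin}(7)$, this promotes $R$ to $S^2 \mathfrak{spin}(7) \subset \mathfrak{spin}(7) \otimes \mathfrak{spin}(7)$, delivering the $Spin(7)$-instanton condition. This direction does not use compactness.

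For the ``only if'' direction, the central reduction is the observation that $\nabla T = 0$ is equivalent to the conjunction $\nabla T \in \Lambda^4$ and $d^{\nabla}T = 0$: under the first condition $\nabla T$ coincides with its alternation $\frac{1}{4} d^{\nabla}T$, and the second condition then forces vanishing. Since $d^{\nabla}T = 0$ is given by hypothesis, it suffices to prove $\nabla T \in \Lambda^4$, or equivalently (by \cite[Lemma~3.4]{I}) that $R$ is symmetric in the exchange of pairs. Writing $\Lambda^2 = \Lambda^2_7 \oplus \Lambda^2_{21}$ with $\Lambda^2_{21} = \mathfrak{spin}(7)$ and decomposing $R = \sum_{A,B \in \{7,21\}} R_{A,B}$ into blocks, the instanton hypothesis forces $R_{A,7} = 0$ for each $A$; the sole remaining obstruction to $R \in S^2\Lambda^2$ is therefore the vanishing of the off-diagonal component $R_{7,21}$.

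To remove this obstruction I would invoke the first algebraic Bianchi identity for $\nabla$, which (see the analysis leading up to \cite[Lemma~3.4]{I}) expresses the cyclic sum $\sum_{\mathrm{cyc}}R(X,Y,Z,V)$ as an algebraic combination of $\sum_{\mathrm{cyc}}(\nabla_X T)(Y,Z,V)$, $dT$ and $\sigma^T$; the hypothesis $dT = 2\sigma^T$ cancels the purely algebraic part and leaves a clean identity coupling the antisymmetric-in-pairs component $R_{7,21}$ to the non-alternating part $(\nabla T)^{\perp}$ of $\nabla T$. Projecting this identity onto the $Spin(7)$-irreducible pieces $\Lambda^4 = \Lambda^4_1 \oplus \Lambda^4_7 \oplus \Lambda^4_{27} \oplus \Lambda^4_{35}$, pairing with $T$ and integrating over the compact manifold $M$ should yield, after a Weitzenb\"ock-type manipulation and one integration by parts, an identity of the shape $\int_M |(\nabla T)^{\perp}|^2 \,\mathrm{vol} = 0$; this forces $\nabla T \in \Lambda^4$, and hence $\nabla T = 0$ as required. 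The main obstacle is the representation-theoretic bookkeeping: identifying how the $Spin(7)$-irreducibles conspire, after integration, to give a nonnegative quadratic form in $(\nabla T)^{\perp}$, with the instanton condition absorbing all indefinite curvature-times-torsion terms; this is a direct but delicate calculation, analogous in spirit to the analysis behind Proposition~\ref{instm1}. The concluding ``in particular'' statements then follow from standard consequences of $\nabla T = 0$: symmetry of the Ricci tensor from $R \in S^2\Lambda^2$, its $\nabla$-parallelism from the second Bianchi identity specialised to parallel torsion, and $\nabla dT = 2\nabla\sigma^T = 0$ from $\nabla T = 0$ together with the metricity of $\nabla$.
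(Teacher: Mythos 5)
Your ``if'' direction is correct and identical to the paper's: $\sb T=0$ gives $d^{\sb}T=0$ and, via \eqref{4form}, $R\in S^2\Lambda^2$, which together with $Hol(\sb)\subset\frak{spin}(7)$ yields the instanton condition; the ``in particular'' clauses also follow as you say. The problem is the ``only if'' direction, where your argument has a genuine gap. The reduction you propose -- it suffices to show $\sb T\in\Lambda^4$, since then $\sb T=\frac14 d^{\sb}T=0$ -- is logically sound, but it merely renames the difficulty: proving $\sb T\in\Lambda^4$ (equivalently, killing the $\Lambda^2_7\otimes\frak{spin}(7)$ block of $R$) is exactly as hard as the theorem itself, and your proposed route to it is an unexecuted sketch. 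The first Bianchi identity \eqref{1bi} with $dT=2\sigma^T$ is a \emph{pointwise algebraic} relation between the cyclic sum of $R$ and $\sb T$; pairing it with $T$ and integrating by parts produces terms cubic in $T$ and terms of the form $\langle R, T\otimes T\rangle$ with no sign, and you give no reason why the resulting quadratic form in $(\sb T)^{\perp}$ should be nonnegative, nor why the curvature-times-torsion terms are ``absorbed'' by the instanton condition. The phrase ``should yield \dots an identity of the shape $\int_M|(\sb T)^{\perp}|^2=0$'' is precisely the step that needs proof, and nothing in your outline produces it.

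The mechanism that actually closes the argument in the paper is different and worth recording. One contracts the \emph{second} Bianchi identity \eqref{bi22} with $\ps_{ijkp}$ and uses the instanton condition \eqref{srri} together with the algebraic torsion identity \eqref{storcy2} to obtain the divergence formula $\sb_iR_{iplm}=-\frac76\theta_rR_{rplm}$. Pairing this with $T_{plm}$ and invoking \eqref{1bi1} together with the hypotheses $d^{\sb}T=0$, $dT=2\sigma^T$ gives the pointwise identity $T_{plm}\sb_i\sb_iT_{plm}+\frac7{12}\sb_{\theta}\|T\|^2=0$, whence by \eqref{lapt} the elliptic inequality $\Delta\|T\|^2+\frac76\sb_{\theta}\|T\|^2=-2\|\sb T\|^2\le 0$; the strong maximum principle on the compact $M$ then forces $\sb T=0$ outright (note it gives the full $\sb T$, not merely its non-alternating part). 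In particular compactness enters through the maximum principle applied to a drift Laplacian, not through an integration-by-parts positivity of the kind you anticipate. As written, your proposal does not constitute a proof of the converse implication.
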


We show in Theorem~\ref{sp7clost} that on a compact $Spin(7)$ manifold with closed torsion the torsion connection is a $Spin(7)$ instanton if and only if the torsion is parallel with respect to the characteristic connection.

In the non-compact case we observe 
\begin{thrm}\label{mainsp0}
Let $(M,\ps)$ be  a $Spin(7)$ manifold.

If the Lee form is closed,  the torsion  connection $\sb$   is a $Spin(7)$-instanton and $d^{\sb}T=0$, 
\[d\theta=0,\qquad R\in \frak{spin}(7)\otimes\frak{spin}(7), \qquad dT=2\sigma^T,\]
then the torsion 3-form is parallel with respect to the torsion connection, $\sb T=0$.

In this case the  Ricci tensor of the torsion connection is symmetric, $\sb$-parallel  and $\sb dT=0$.
\end{thrm}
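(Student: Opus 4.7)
The plan is to adapt the proof of the compact case (Theorem~\ref{main2}) to the non-compact setting, replacing any integration by Stokes' theorem with a pointwise identity that relies on $d\theta=0$. The endpoint is the same in both cases: by Lemma~3.4 of [I], the pair-exchange symmetry $R\in S^2\Lambda^2$ is equivalent to $\sb T\in\Lambda^4$, and once $\sb T$ is totally skew the hypothesis $d^{\sb}T=0$ forces $\sb T=0$, because then $d^{\sb}T=4\,\text{Alt}(\sb T)=4\,\sb T$. Thus the entire proof reduces to establishing $R\in S^2\Lambda^2$ from the three hypotheses $d\theta=0$, the instanton condition $R\in\frak{spin}(7)\otimes\frak{spin}(7)$, and $dT=2\sigma^T$ (the latter being equivalent to $d^{\sb}T=0$).

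First I would write down the first Bianchi identity with skew torsion for $\sb$, expressing the cyclic sum over $X,Y,Z$ of $R(X,Y,Z,W)$ as a cyclic sum of $(\sb_XT)(Y,Z,W)$ plus $dT(X,Y,Z,W)-2\sigma^T(X,Y,Z,W)$. The assumption $dT=2\sigma^T$ immediately kills the last two terms, so the cyclic curvature sum is a cyclic sum of $\sb T$; a short antisymmetrization identifies the pair-antisymmetric part $R_{abcd}-R_{cdab}$ as a specific linear combination of components of $\sb T$. Next, the instanton hypothesis places both 2-form legs of $R$ in $\Lambda^2_{21}\cong\frak{spin}(7)$, so the pair-antisymmetric part must live in the antisymmetric complement of $S^2(\Lambda^2_{21})$ inside $\Lambda^2_{21}\otimes\Lambda^2_{21}$. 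Decomposing this complement into $Spin(7)$-irreducibles and pairing the Bianchi identity with test vectors from the trace-type $\Lambda^2_7\cong TM$, the residual obstruction to pair symmetry is expressed, via the standard $Spin(7)$ identities relating $\theta$ to $T$ and $\ps$, as a numerical multiple of $d\theta$.

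The hypothesis $d\theta=0$ then eliminates this residual term and yields $R\in S^2\Lambda^2$. Applying Lemma~3.4 of [I] gives $\sb T\in\Lambda^4$, and the argument of the first paragraph forces $\sb T=0$. The remaining assertions follow quickly: $\sb T=0$ combined with $dT=2\sigma^T$ gives $\sb\, dT=2\,\sb\sigma^T=0$, since $\sigma^T$ is polynomial in $T$ alone; the first Bianchi identity with $\sb T=0$ becomes an algebraic relation on $R$ which, when traced against the metric, forces symmetry of the Ricci tensor of $\sb$; and the second Bianchi identity with $\sb T=0$ yields $\sb\,\text{Ric}^{\sb}=0$. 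The main obstacle is the intermediate $Spin(7)$-representation bookkeeping: identifying which irreducible component of the pair-antisymmetric part of $R$ survives the instanton condition and verifying that the Lee-form-to-torsion identities present its residual as a clean numerical multiple of $d\theta$, rather than a more complicated combination of derivatives of $T$ that would defeat the argument.
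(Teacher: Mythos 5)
Your reduction of the theorem to the statement $R\in S^2\Lambda^2$ is correct: granting $d^{\sb}T=0$, pair symmetry of $R$ is equivalent to $\sb T\in\Lambda^4$ by \eqref{4form}, and then $\sb T=\tfrac14 d^{\sb}T=0$. The gap is in how you propose to obtain $R\in S^2\Lambda^2$. From the first Bianchi identity and $d^{\sb}T=0$ one does get $R_{ijkl}-R_{klij}=\sb_iT_{jkl}-\sb_jT_{ikl}$ (this is \eqref{ins2}), and the instanton plus holonomy conditions place this tensor in $\Lambda^2\bigl(\frak{spin}(7)\bigr)$. But that space is $210$-dimensional, decomposing as $\mathbf{21}\oplus\mathbf{189}$, whereas $d\theta$ carries at most $28$ components ($\mathbf{7}\oplus\mathbf{21}$); only the trace-type piece of $R_{ijkl}-R_{klij}$ (the one seen by $Ric_{ij}-Ric_{ji}=-\delta T_{ij}$ and hence by $d^{\sb}\theta$ and $\theta\lrcorner T$) is tied to the Lee form. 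Nothing in your argument controls the $\mathbf{189}$ component, so the assertion that ``the residual obstruction to pair symmetry is a numerical multiple of $d\theta$'' is unsubstantiated and is exactly the step you yourself flag as the obstacle. (Also, $\Lambda^2_7$ is the $7$-dimensional representation of $Spin(7)$, not $TM\cong\mathbf{8}$, so the proposed pairing with ``test vectors from $\Lambda^2_7\cong TM$'' does not make sense as stated.) A purely pointwise, first-order representation-theoretic argument of this kind would make the compactness in Theorem~\ref{main2} and the auxiliary hypotheses in Theorems~\ref{mainsp0} and \ref{mainsp} superfluous, which is a strong signal that it is not available.

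The paper's proof is genuinely second order and runs through the \emph{second} Bianchi identity, which your proposal never invokes. Concretely: Propositions~\ref{spin7} and \ref{spin71} show that under the instanton and $d^{\sb}T=0$ hypotheses one has $\delta T=\tfrac73\sb\theta=-\tfrac76\theta\lrcorner T$ and, via the contracted second Bianchi identity \eqref{e1}, that $d(Scal)=0$, i.e.\ \eqref{ssss}. The hypothesis $d\theta=0$ then gives $\sb_i\|\theta\|^2=\tfrac67\delta T_{ij}\theta_j=\theta_sT_{sij}\theta_j=0$, so $\|\theta\|^2$ and hence $\|T\|^2$ are constant. Finally, the second Bianchi identity contracted with $\ps$ yields \eqref{sbi24} and thence the Weitzenb\"ock-type identity \eqref{slap}, $\Delta\|T\|^2+\tfrac76\sb_\theta\|T\|^2=-2\|\sb T\|^2$, whose left-hand side vanishes because $\|T\|^2$ is constant; this is Corollary~\ref{smain3} and it forces $\sb T=0$. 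If you want to repair your proposal, you would need either to supply the missing algebraic fact about the $\mathbf{189}$ component (which I do not believe holds pointwise) or to replace that step by the differential argument above.
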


\begin{rmrk}
We remark that the converse in Theorem~\ref{mainsp0} is not true. We construct in Example~\ref{exdtit} a $Spin(7)$ manifold having parallel torsion with respect to the torsion connection with non-closed Lee form.
\end{rmrk}

In the general non-compact case we have
\begin{thrm}\label{mainsp}
Let $(M,\ps)$ be  a $Spin(7)$ manifold. 

The  curvature of the torsion  connection $\sb$   is a $Spin(7)$-instanton and $d^{\sb}T=\delta T=0$, i.e.
\[R\in \frak{spin}(7)\otimes\frak{spin}(7), \qquad dT=2\sigma^T,\qquad \delta T=0 , \]
if and only if the torsion 3-form is parallel with respect to the characteristic connection, $\sb T=0$.
\end{thrm}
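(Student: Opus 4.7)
The plan is to establish both implications, with the substantive content in the forward direction.

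The reverse implication $\sb T = 0 \Rightarrow$ (instanton, $d^{\sb} T = 0$, $\delta T = 0$) is immediate: $d^{\sb} T = 4\,\mathrm{Alt}(\sb T)$ and $\delta T$ (a trace of $\sb T$) both vanish at once, and $\sb T = 0$ vacuously lies in $\Lambda^4$, so by \cite[Lemma~3.4]{I} the curvature $R$ of $\sb$ is symmetric in exchanging the first and second pairs. Combined with $\mathrm{Hol}(\sb)\subset Spin(7)$ this forces the $Spin(7)$-instanton condition $R\in\frak{spin}(7)\otimes\frak{spin}(7)$. The symmetry and $\sb$-parallelism of the Ricci tensor together with $\sb dT=0$ then follow from standard contractions of the differential Bianchi identity for $\sb$.

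For the forward direction, I would first invoke the algebraic Bianchi identity for a metric connection with totally skew torsion $T$, which expresses the cyclic sum $\mathfrak{S}_{X,Y,Z}R(X,Y,Z,W)$ as a combination of $d^{\sb}T$, $\sigma^T$ and $(\sb_W T)(X,Y,Z)$. Under the hypothesis $d^{\sb}T=0$ (equivalently $dT=2\sigma^T$), this identity reduces to a relation between $R$, $\sb T$ and $dT$ alone. Next I would exploit the instanton assumption: $R\in\frak{spin}(7)\otimes\frak{spin}(7)$ is equivalent to an eigenvalue identity of the form $\ps\,\lrcorner\, R = cR$ on each pair of indices (with the standard convention for the fundamental $Spin(7)$ 4-form $\ps$). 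Contracting the Bianchi identity with $\ps$ in the first three slots then turns it into a purely pointwise, algebraic constraint on $\sb T$, $\sigma^T$ and $dT$.

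The decisive step is to decompose $\sb T\in T^*M\otimes \Lambda^3$ into its $Spin(7)$-irreducible components. The assumption $d^{\sb}T=0$ annihilates the total alternation, while $\delta T=0$ annihilates the codifferential-type trace, forcing $\sb T$ to lie in a specific sum of irreducible summands of $T^*M\otimes\Lambda^3$. The algebraic identity derived in the previous step then pins each of these remaining components to zero, giving $\sb T=0$. The conclusions about the Ricci tensor and $\sb dT$ follow exactly as in the reverse implication.

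The main obstacle is the representation-theoretic bookkeeping in the last step: $T^*M\otimes\Lambda^3$ splits under $Spin(7)$ into several irreducible pieces, and one must verify in each one that the vanishing of the alternation (from $d^{\sb}T=0$), the vanishing of the trace (from $\delta T=0$), and the $\ps$-contracted Bianchi identity together leave no nonzero solution. This purely pointwise argument is what replaces the integration by parts used in the compact Theorem~\ref{main2}, the extra hypothesis $\delta T=0$ being precisely the pointwise counterpart of the trace identity that compactness would otherwise supply.
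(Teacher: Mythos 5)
Your reverse implication is fine and matches the paper. The forward direction, however, has a genuine gap: you propose a purely pointwise, representation-theoretic argument based on the \emph{first} (algebraic) Bianchi identity, the instanton condition, and the decomposition of $\sb T\in T^*M\otimes\Lambda^3$ into $Spin(7)$-irreducible pieces. This cannot work as described. The space $T^*M\otimes\Lambda^3$ has dimension $448$; the hypotheses $d^{\sb}T=0$ and $\delta T=0$ remove only the $\Lambda^4$-alternation ($70$ dimensions) and the $\Lambda^2$-trace ($28$ dimensions), leaving a $350$-dimensional space of admissible $\sb T$. The only pointwise link between $R$ and $\sb T$ is the first Bianchi identity (equivalently \eqref{ins1}), and contracting it with $\ps$ under the instanton condition produces only a handful of scalar identities (essentially \eqref{spbi1}, \eqref{spbi2}, \eqref{spin1} and the constraint that $\sb_iT_{jkl}-\sb_jT_{ikl}$ lies in $\Lambda^2_{21}\otimes\Lambda^2_{21}$); these are far too few linear conditions to annihilate all remaining irreducible components of $\sb T$. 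That no such algebraic argument exists is corroborated by the paper itself, which is forced to go through a differential argument.

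The missing ingredient is the \emph{second} Bianchi identity, used twice. First, contracting it with $\ps$ gives the divergence formula \eqref{sbi24}, $\sb_iR_{iplm}=-\frac76\theta_rR_{rplm}$, which multiplied by $T_{plm}$ and combined with $d^{\sb}T=0$ yields the Bochner-type identity \eqref{slap},
\begin{equation*}
\Delta||T||^2+\tfrac76\sb_{\theta}||T||^2=-2||\sb T||^2 .
\end{equation*}
Second, the contracted second Bianchi identity \eqref{e1} is used in Proposition~\ref{spin71} to show that under the instanton and $d^{\sb}T=0$ hypotheses one has $\delta T=\frac73\sb\theta$ and $d(Scal)=0$ in the form \eqref{ssss}, i.e. $\sb_l\bigl[-\frac{49}{36}||\theta||^2+\frac16||T||^2\bigr]=0$. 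The extra hypothesis $\delta T=0$ then enters not as a trace condition on a decomposition of $\sb T$, but to force $\sb\theta=0$, hence $||\theta||^2$ constant, hence $||T||^2$ constant by \eqref{ssss}; plugging $d||T||^2=0$ into the displayed Bochner identity finally gives $||\sb T||^2=0$ pointwise (Corollary~\ref{smain3}), with no compactness needed. If you want to salvage your outline, you must replace the representation-theoretic step with this second-Bianchi/Bochner argument, or else exhibit the (nontrivial, and in fact unavailable) algebraic identities that would kill every irreducible component of $\sb T$.
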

On a balanced $Spin(7)$ manifold the Lee form vanishes and we derive
\begin{cor}\label{mainspc}
Let $(M,\ps)$ be  a balanced $Spin(7)$ manifold. 

The  curvature of the torsion  connection $\sb$   is a $Spin(7)$-instanton and $d^{\sb}T=0$, i.e.
\[R\in \frak{spin}(7)\otimes\frak{spin}(7), \qquad dT=2\sigma^T , \]
if and only if the torsion 3-form is parallel with respect to the characteristic connection, $\sb T=0$.
\end{cor}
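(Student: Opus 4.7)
The plan is to derive this corollary directly from Theorem~\ref{mainsp0} by exploiting the fact that ``balanced'' is defined precisely by vanishing of the Lee form. Thus the corollary is genuinely a corollary, not a new result, and no new identity has to be established; the entire argument amounts to verifying that the hypotheses of Theorem~\ref{mainsp0} are automatic in the balanced setting, plus recording the very short converse.

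First I would observe that on a balanced $Spin(7)$ manifold one has $\theta=0$, so in particular $d\theta=0$. This is the only nontrivial content needed to line up the hypotheses: we have the full $Spin(7)$ structure and the assumed curvature conditions $R\in\mathfrak{spin}(7)\otimes\mathfrak{spin}(7)$ and $dT=2\sigma^T$, which (recall \eqref{inst6}) is equivalent to $d^{\sb}T=0$. Hence Theorem~\ref{mainsp0} applies verbatim and yields $\sb T=0$, giving the forward implication together with the symmetry, $\sb$-parallelism of the Ricci tensor, and $\sb dT=0$ stated in that theorem.

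For the converse, I would appeal to the general principle already recalled in the introduction: if $\sb T=0$ then trivially $\sb T\in\Lambda^4$, so by \cite[Lemma~3.4]{I} the curvature $R$ lies in $S^2\Lambda^2$, and since $Hol(\sb)\subset\mathfrak{spin}(7)$ this forces $R\in\mathfrak{spin}(7)\otimes\mathfrak{spin}(7)$, i.e.\ the $Spin(7)$-instanton condition. Likewise $d^{\sb}T=4\,\mathrm{Alt}(\sb T)=0$, which by \eqref{inst6} is $dT=2\sigma^T$.

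There is essentially no obstacle here; the substantive work is carried out in Theorem~\ref{mainsp0}. The only point that deserves a brief remark is that the balanced condition is strictly stronger than the closedness of $\theta$ assumed in Theorem~\ref{mainsp0}, which is why the corollary requires no compactness assumption and no additional hypothesis such as $\delta T=0$ (contrast with Theorem~\ref{mainsp}). One may also note, as the paper does elsewhere, that the converse statement here is sharp in the sense of Remark following Theorem~\ref{mainsp0}: outside the balanced case, parallel torsion does not force the Lee form to be closed.
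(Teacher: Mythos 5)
Your proof is correct. The only difference from the paper is the choice of reduction target: the paper deduces Corollary~\ref{mainspc} from Theorem~\ref{mainsp}, whose extra hypothesis $\delta T=0$ is supplied on a balanced manifold by Proposition~\ref{spin71} (which gives $\delta T=\tfrac73\sb\theta=0$ once $\theta=0$), whereas you deduce it from Theorem~\ref{mainsp0}, whose hypothesis $d\theta=0$ is trivially satisfied when $\theta=0$. Both routes are legitimate and ultimately funnel through the same machinery (Proposition~\ref{spin71}, the identity \eqref{ssss}, and Corollary~\ref{smain3}), so nothing of substance is gained or lost; your path is marginally more direct since no auxiliary verification of $\delta T=0$ is needed. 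Your converse direction (via \cite[Lemma~3.4]{I} and $Hol(\sb)\subset\frak{spin}(7)$, together with \eqref{inst6}) matches the paper's. One small caveat: your closing remark that the balanced hypothesis is ``why the corollary requires no compactness assumption'' is slightly off, since Theorem~\ref{mainsp0} is already a non-compact statement; the balanced hypothesis is rather what lets the one-directional Theorem~\ref{mainsp0} upgrade to a genuine equivalence, because the failure of its converse (Example~\ref{exdtit}) lives entirely in the non-closedness of $\theta$, which is excluded here by fiat.
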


Note that a $Spin(7)$ structures  with $\sb$-parallel torsion 3-form  have co-closed Lee form. More general,   due to  \cite[Theorem~4.3]{I1},  for any  $Spin(7)$ structure  on a compact manifold there exists an unique $Spin(7)$ structure in the same conformal with co-closed Lee form,  called \emph{the Gauduchon $Spin(7)$ structure}.

\begin{thrm}\label{spga}
Let $(M,\tilde{\ps})$ be a compact $Spin(7)$ manifold with closed Lee form, $d\tilde{\theta}=0$.

If the torsion connection  $\nabla$ of the  Gauduchon $Spin(7)$ structure $\ps=e^f\tilde{\ps}$  is a $Spin(7)$-instanton then its  Lee form $\theta$ is parallel with respect to the torsion connection, $\sb\theta=0$, and the 4-form $d^{\sb}T\in\Lambda^4_{27}$.

In particular, the 4-form $d^{\sb}T$ is self-dual, $*d^{\sb}T=d^{\sb}T$.
\end{thrm}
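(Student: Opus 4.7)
\emph{Plan.} The strategy is to reduce to the case in which $\theta$ is harmonic, then integrate a Weitzenbock-type identity for the torsion connection $\nabla$ on the compact manifold so that the $Spin(7)$-instanton hypothesis turns the curvature contribution into a sum of squared norms of exactly the $Spin(7)$-irreducible components of $d^\nabla T$ that we want to kill. For the preliminary reduction, one uses that under conformal change $\Psi = e^{f}\tilde\Psi$ in dimension $8$ the Lee form changes by an exact form, so $d\tilde\theta = 0$ yields $d\theta = 0$ for the Gauduchon representative, and combining this with $\delta\theta = 0$ makes $\theta$ harmonic on $M$; in particular integration by parts may be applied freely.

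For the main step, I would set up the Weitzenbock identity for $\theta$ with respect to $\nabla$. For any metric connection with totally skew-symmetric torsion $T$ one has a standard identity of the schematic form
\[
(d\delta + \delta d)\theta - \nabla^*\nabla\theta = \mathrm{Ric}^\nabla(\theta^\sharp)^{\flat} + \bigl(\theta^\sharp \lrcorner\, d^\nabla T\bigr)^{(1)} + Q(T,\theta),
\]
where $Q(T,\theta)$ is the quadratic-in-$T$ contribution built from $\sigma^T$ and the interior products with $\theta$. Under the $Spin(7)$-instanton hypothesis $R \in \mathfrak{spin}(7)\otimes\mathfrak{spin}(7)$ one has $\mathrm{Scal}^\nabla = 0$, and the first Bianchi identity combined with the paper's relation $dT - d^\nabla T = 2\sigma^T$ rewrites the pairing $\mathrm{Ric}^\nabla(\theta^\sharp,\theta^\sharp)$ purely in terms of the $Spin(7)$-irreducible projections of $d^\nabla T$ onto $\Lambda^4_1 \oplus \Lambda^4_7 \oplus \Lambda^4_{35}$. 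Pairing the identity with $\theta$, integrating over $M$, and using the harmonicity from Step 1, the identity should collapse to
\[
0 = \int_M \Bigl( |\nabla\theta|^2 + c_1\,|(d^\nabla T)_1|^2 + c_7\,|(d^\nabla T)_7|^2 + c_{35}\,|(d^\nabla T)_{35}|^2 \Bigr)\, dv_g
\]
with strictly positive universal constants $c_1, c_7, c_{35}$, where $(d^\nabla T)_j$ denotes the projection onto $\Lambda^4_j$. Nonnegativity then forces $\nabla\theta = 0$ and simultaneously $(d^\nabla T)_1 = (d^\nabla T)_7 = (d^\nabla T)_{35} = 0$, that is $d^\nabla T \in \Lambda^4_{27}$; self-duality of $d^\nabla T$ is automatic since $\Lambda^4_{27}$ lies in the self-dual part of $\Lambda^4$.

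The main obstacle is the algebraic book-keeping in Step 2: verifying that the instanton hypothesis produces the correct \emph{positive} sign on each of the three coefficients $c_1, c_7, c_{35}$ simultaneously, while leaving the $\Lambda^4_{27}$-component of $d^\nabla T$ unconstrained. This amounts to tracking the first Bianchi identity against the $Spin(7)$-irreducible decompositions of $R$ and of $\nabla T$, and closely parallels the analogous arguments in the proofs of Theorems~\ref{main2} and~\ref{mainsp0}; the difference is that dropping the auxiliary hypothesis $d^\nabla T = 0$ and retaining only the Gauduchon normalization preserves precisely the $\Lambda^4_{27}$-component, which is exactly the content of the conclusion.
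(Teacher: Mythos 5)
Your overall strategy (reduce to a harmonic Lee form, then run a Bochner-type argument for $\theta$ with respect to $\nabla$ on the compact manifold) points in the right direction and is in the same spirit as the paper's proof, but the central step as you describe it cannot work. First, the assertion that the $Spin(7)$-instanton condition forces $\mathrm{Scal}^\nabla=0$ is false: by \eqref{sricg2} the scalar curvature equals $\frac72\delta\theta+\frac{49}{18}\|\theta\|^2-\frac13\|T\|^2$, and even under the stronger hypotheses of Proposition~\ref{spin71} one only obtains that it is \emph{constant}. Second, and more seriously, the claimed collapse of the integrated Weitzenbock identity to $0=\int_M\bigl(|\nabla\theta|^2+c_1|(d^\nabla T)_1|^2+c_7|(d^\nabla T)_7|^2+c_{35}|(d^\nabla T)_{35}|^2\bigr)$ with positive constants is structurally impossible: pairing a Weitzenbock formula for $\theta$ with $\theta$ produces a curvature term $Ric(\theta,\theta)$ which, via the first Bianchi identity, is \emph{linear} in $d^\nabla T$ (contracted against $\theta\otimes\theta$ and $\Psi$); no quadratic expressions $|(d^\nabla T)_j|^2$ can arise from it, so there is no mechanism behind your sum-of-squares identity. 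What actually happens (see \eqref{srri4}--\eqref{spinn}) is that the $Spin(7)$ identities, in particular $\theta\lrcorner T\in\Lambda^2_{21}$ and \eqref{vann}, reduce $Ric(\theta,\theta)$ to the pure drift term $-\frac74\theta_i\nabla_i\|\theta\|^2$, which yields $\Delta\|\theta\|^2-\frac74\theta_i\nabla_i\|\theta\|^2=-2\|\nabla\theta\|^2\le 0$ and hence $\nabla\theta=0$ by the strong maximum principle (or, since $\delta\theta=0$, by integration over $M$).

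The second gap is in how you reach $d^\nabla T\in\Lambda^4_{27}$. This is not an $L^2$ vanishing statement about components of $d^\nabla T$; it is a pointwise algebraic consequence of $\nabla\theta=0$. Once $\nabla\theta=0$ is established, Proposition~\ref{spin7} a) gives $\delta T=\frac76 d^\nabla\theta=0$, and then the identity \eqref{spin1}, namely $\delta T_{ml}=\frac1{12}d^{\nabla}T_{ijkl}\Psi_{ijkm}-\frac73\nabla_l\theta_m$, forces $d^\nabla T_{ijkl}\Psi_{ijkm}=0$, which by \eqref{dec4} is precisely the membership criterion for $\Lambda^4_{27}$; self-duality then follows as you say. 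Your proposal contains no substitute for this algebraic step, and without it the conclusion about $d^\nabla T$ does not follow.
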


We remark that on compact $G_2$ and $Spin(7)$ manifolds with closed torsion there exists a canonical symmetry reduction discovered recently by A. Kennon and J. Streets in \cite{KenStr}.
On the other hand, a rigidity theorem for Riemannian manifolds
that admit a connection     with torsion a 3-form, which is both closed and    covariantly
constant, is proved by G. Papadoulos in \cite{Pap}. 

\begin{conv}
Everywhere in the paper we will make no difference between tensors and the corresponding forms via the metric as well as we will  use Einstein summation conventions, i.e. repeated Latin  indices are summed over. The $*$ denotes the Hodge star operator of the Riemannian metric induced by $G_2$ structure $\p$ or by the Spin(7) structure $\ps$.

\end{conv}

\section{Preliminaries}
In this section, we recall some known curvature properties of a metric connection with totally skew-symmetric torsion on a Riemannian manifold. 

On a Riemannian manifold $(M,g)$ of dimension $n$ any metric connection $\sb$ with totally skew-symmetric torsion $T$ is connected with the Levi-Civita connection $\sb^g$ of the metric $g$ by
\begin{equation}\label{tsym}
\sb^g=\sb- \frac12T \quad \textnormal{leading to} \quad \LC T=\sb T+\frac12\sigma^T,
\end{equation}
where the 4-form $\sigma^T$, introduced in \cite{FI}, is defined by
 \begin{equation}\label{sigma}
 \sigma ^T(X,Y,Z,V)=\frac12\sum_{j=1}^n(e_j\lrcorner T)\wedge(e_j\lrcorner T)(X,Y,Z,V), 
\end{equation} 
   $(e_a\lrcorner T)(X,Y)=T(e_a,X,Y)$ is the interior multiplication and $\{e_1,\dots,e_n\}$ is an orthonormal  basis.

The properties of the 4-form $\sigma^T$ are studied in detail in \cite{AFF} where it is shown that $\sigma^T$ measures the `degeneracy' of the 3-form $T$.

The exterior derivative $dT$ has the following  expression (see e.g. \cite{I,IP2,FI})
\begin{equation}\label{dh}
\begin{split}
dT(X,Y,Z,V)=d^{\sb}T(X,Y,Z,V) +2\sigma^T(X,Y,Z,V), \quad \textnormal{where}\\
d^{\sb}T(X,Y,Z,V)=(\nabla_XT)(Y,Z,V)+(\nabla_YT)(Z,X,V)+(\nabla_ZT)(X,Y,V)-(\nabla_VT)(X,Y,Z).
 \end{split}
 \end{equation}
 For the curvature of $\sb$ we use the convention $ R(X,Y)Z=[\nabla_X,\nabla_Y]Z -
 \nabla_{[X,Y]}Z$ and $ R(X,Y,Z,V)=g(R(X,Y)Z,V)$. It has the well known properties
$
 R(X,Y,Z,V)=-R(Y,X,Z,V)=-R(X,Y,V,Z).
 $
 
  The first Bianchi identity for $\nabla$ can be written in the form (see e.g. \cite{I,IP2,FI})
 \begin{equation}\label{1bi}
 \begin{split}
 R(X,Y,Z,V)+ R(Y,Z,X,V)+ R(Z,X,Y,V)\\ 
 =dT(X,Y,Z,V)-\sigma^T(X,Y,Z,V)+(\nabla_VT)(X,Y,Z).
 \end{split}
 \end{equation}
It is proved in \cite[p. 307]{FI} that the curvature of  a metric connection $\sb$ with totally skew-symmetric torsion $T$  satisfies also the  identity
 \begin{equation}\label{gen}
 \begin{split}
 R(X,Y,Z,V)+ R(Y,Z,X,V)+ R(Z,X,Y,V)-R(V,X,Y,Z)-R(V,Y,Z,X)-R(V,Z,X,Y)\\
 =\frac32dT(X,Y,Z,V)-\sigma^T(X,Y,Z,V).
 \end{split}
 \end{equation}
 One gets from \eqref{gen} and \eqref{1bi} that the curvature of the torsion connection satisfies the identity  
 \cite[Proposition~2.1]{IS}
 \begin{equation}\label{1bi1}
 \begin{split}
R(V,X,Y,Z)+R(V,Y,Z,X)+R(V,Z,X,Y)= -\frac12dT(X,Y,Z,V)+(\nabla_VT)(X,Y,Z). 
 \end{split}
 \end{equation}
 
It is known from  \cite[Lemma~3.4]{I} that a metric connection $\sb$ with  torsion 3-form $T$ has curvature $R\in S^2\Lambda^2$, i.e. it satisfies 
 \begin{equation}\label{r4}
 R(X,Y,Z,V)=R(Z,V,X,Y) ,
 \end{equation}
 if and only if the covariant derivative of the torsion with respect to the torsion connection  is a 4-form, i.e. the torsion $T$ is a Killing 3-form. Killing forms on Riemannian manifold,  introduced by Yano in \cite{Yan}, are  natural generalization of Killing vector fields and have a lot of interest  in mathematics and physics (see e.g. \cite{USem} and references therein). 
 
\begin{lemma}\cite[Lemma~3.4]{I} The next equivalences hold for a metric connection with torsion 3-form
\begin{equation}\label{4form}
(\sb_XT)(Y,Z,V)=-(\sb_YT)(X,Z,V) \Longleftrightarrow R(X,Y,Z,V)=R(Z,V,X,Y) ) \Longleftrightarrow dT=4\LC T.
\end{equation}
\end{lemma}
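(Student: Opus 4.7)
The plan is to establish the chain $(1) \Leftrightarrow (3) \Leftrightarrow (2)$ using as the main tools the two Bianchi-type identities \eqref{1bi}, \eqref{1bi1} together with the relation $\LC T = \sb T + \tfrac12\sigma^T$ from \eqref{tsym} and the definition of $d^{\sb}T$ in \eqref{dh}. The key observation is that the condition $(\sb_XT)(Y,Z,V) = -(\sb_YT)(X,Z,V)$, combined with the fact that $T$ is already a $3$-form in its remaining arguments, is simply the statement that $\sb T$ is totally skew-symmetric in $(X,Y,Z,V)$, i.e. that $\sb T \in \Lambda^4$. So (1) just says $\sb T$ is a $4$-form.

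To prove $(1) \Leftrightarrow (3)$, assume $\sb T \in \Lambda^4$; then each of the four summands on the right of \eqref{dh} equals $(\sb_XT)(Y,Z,V)$ up to a permutation sign, and a short sign-check gives $d^{\sb}T = 4\,\sb T$ as $4$-forms. Substituting in \eqref{dh} and using \eqref{tsym} yields $dT = 4\,\sb T + 2\sigma^T = 4\LC T$, which is (3). Conversely, if $dT = 4\LC T$, then \eqref{tsym} gives $\sb T = \tfrac14 dT - \tfrac12\sigma^T$; the right-hand side is a sum of two $4$-forms, hence $\sb T \in \Lambda^4$, which is (1). For the equivalence $(2) \Leftrightarrow (1)$, denote by $A$ and $B$ the cyclic Bianchi sums appearing on the left of \eqref{1bi} and \eqref{1bi1}, respectively. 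Using the two antisymmetries of $R$ in each pair, one checks that the pair symmetry (2) is equivalent to the single scalar identity $A+B=0$. Adding \eqref{1bi} and \eqref{1bi1} gives
\begin{equation*}
A+B = \tfrac12 dT(X,Y,Z,V) - \sigma^T(X,Y,Z,V) + 2(\sb_V T)(X,Y,Z),
\end{equation*}
so (2) is equivalent to
\begin{equation*}
2(\sb_V T)(X,Y,Z) = \sigma^T(X,Y,Z,V) - \tfrac12 dT(X,Y,Z,V).
\end{equation*}
The right-hand side here is automatically totally antisymmetric in $(X,Y,Z,V)$, so this identity forces $\sb T \in \Lambda^4$, yielding (1). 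For the converse, assuming (1), we have already shown $dT = 4\,\sb T + 2\sigma^T$, so the right-hand side above equals $-2\,\sb T(X,Y,Z,V)$, and a single transposition sign shows this agrees with the left-hand side $2\,\sb T(V,X,Y,Z)$, proving (2).

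The main obstacle is bookkeeping of permutation signs: both the computation $d^{\sb}T = 4\,\sb T$ (when $\sb T\in\Lambda^4$) and the reduction of the pair symmetry (2) to the single condition $A+B=0$ rely on carefully tracking odd versus even permutations among the four slots. Once those sign checks are carried out, all three conditions are tied together by the linear combination of \eqref{1bi} and \eqref{1bi1} above, and no further curvature input is needed.
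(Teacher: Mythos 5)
Your proof is correct. Note first that the paper itself does not prove this lemma --- it is quoted from \cite[Lemma~3.4]{I} --- so there is no in-paper argument to match; what you have produced is a valid self-contained reconstruction from the identities \eqref{tsym}, \eqref{dh}, \eqref{1bi}, \eqref{1bi1} already recorded in the preliminaries. All the individual steps check out: condition (1) together with the built-in skewness of $T$ does say $\sb T\in\Lambda^4$ (the transpositions $(12)$, $(23)$, $(34)$ generate $S_4$); when $\sb T\in\Lambda^4$ the four terms of $d^{\sb}T$ each contribute $+\sb T$ (the first three by even permutations, the last because the $4$-cycle is odd), giving $d^{\sb}T=4\sb T$ and hence $dT=4\LC T$ via \eqref{tsym}; and your linear combination $A+B=\tfrac12 dT-\sigma^T+2(\sb_VT)(X,Y,Z)$ is exactly the sum of \eqref{1bi} and \eqref{1bi1}. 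The one place where ``one checks'' conceals a genuine (if classical) argument is the converse implication $A+B=0\Rightarrow(2)$: writing $S(X,Y,Z,V)=R(X,Y,Z,V)-R(Z,V,X,Y)$, one has $A+B=S(X,Y,Z,V)+S(Y,Z,X,V)+S(Z,X,Y,V)$, and deducing $S=0$ from the vanishing of this cyclic sum uses all three antisymmetries of $S$ (in the first pair, the second pair, and under pair exchange) in the manner of the classical ``Bianchi plus antisymmetries implies pair symmetry'' argument; this does go through, but it is the step that most deserves to be written out. A small shortcut worth knowing: identity \eqref{ins1} (which is \cite[(3.38)]{I}, quoted later in the paper) expresses $2R(X,Y,Z,V)-2R(Z,V,X,Y)$ directly as a signed sum of four covariant derivatives of $T$, from which $(1)\Leftrightarrow(2)$ follows in one line without passing through $A+B$; your route through \eqref{1bi} and \eqref{1bi1} is equivalent but slightly longer. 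Finally, ``a single transposition sign'' in your last step should read ``a $4$-cycle, which is odd''; the sign you use is nevertheless the right one.
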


\noindent The   Ricci tensors and scalar curvatures of  $\LC$ and $\sb$ are related by (\cite[Section~2]{FI}, \cite [Prop. 3.18]{GFS})
\begin{equation}\label{rics}
\begin{split}
Ric^g(X,Y)=Ric(X,Y)+\frac12 (\delta T)(X,Y)+\frac14\sum_{i=1}^ng\Big(T(X,e_i),T(Y,e_i)\Big) , \\
Scal^g=Scal+\frac14||T||^2,\qquad Ric(X,Y)-Ric(Y,X)=-(\delta T)(X,Y),
\end{split}
\end{equation}
where $\delta=(-1)^{np+n+1}*d*$ is the co-differential acting on $p$-forms and $*$ is the Hodge star operator satisfying $*^2=(-1)^{p(n-p)}$.


One  has the general identities for $\alpha\in\Lambda^1$ and $\beta\in \Lambda^k$ 
\begin{equation}\label{1star}
\begin{split}
*(\alpha\lrcorner\beta)=(-1)^{k+1}(\alpha\wedge*\beta);\qquad (\alpha\lrcorner\beta)=(-1)^{n(k+1)}*(\alpha\wedge*\beta);\\
*(\alpha\lrcorner*\beta)=(-1)^{n(k+1)+1}(\alpha\wedge\beta);\qquad (\alpha\lrcorner*\beta)=(-1)^{k}*(\alpha\wedge\beta).
\end{split}
\end{equation}
Denote by $\delta^{\sb}T$ the negative  trace of $\sb T$, $\delta^{\sb}T(X,Y)=-(\sb_{e_i}T)(e_i,X,Y)$.
 
 It follows  from \eqref{dh} and \eqref{tsym} that 
 \begin{equation}\label{inst6}
 d^{\sb}T=0 \Longleftrightarrow dT=2\sigma^T;\qquad \delta^{\sb}T=\delta T.
 \end{equation}

\section{$G_2$ structure}

We recall some notions of $G_2$ geometry. Endow $\mathbb R^7$ with its standard orientation and
inner product. Let $\{e_1,\dots,e_7\}$ be an oriented orthonormal basis which we identify with
the dual basis via the inner product. Write $e_{i_1 i_2\dots i_p}$ for the
monomial $e_{i_1} \wedge e_{i_2} \wedge \dots \wedge e_{i_p}$
and consider the three-form $\p$ on $\mathbb R^7$ given by
\begin{equation}\label{g2}
\begin{split}
\p=e_{127}+e_{135}-e_{146}-e_{236}-e_{245}+e_{347}+e_{567}.
\end{split}
\end{equation}
The subgroup of $GL(7)$ fixing $\p$ is the exceptional Lie group $G_2$. It is a compact,
connected, simply-connected, simple Lie subgroup of $SO(7)$ of dimension 14 \cite{Br}.  The Lie algebra is denoted by
 $\frak{g}_2$ and it is isomorphic to the 2-forms satisfying 7 linear equations, namely 
 $\frak{g}_2\cong  \{\alpha\in \Lambda^2(M) \vert
  *(\alpha\wedge\p) =- \alpha\}.$ 
  
The 3-form
$\p$ corresponds to a real spinor and therefore,
$G_2$ can be identified as the isotropy group of a non-trivial
real spinor.

The Hodge star operator supplies the 4-form $\ph=*\p$ given by
\begin{equation*}
\begin{split}
\ph=*\p=e_{1234}+e_{3456}+e_{1256}-e_{2467}+e_{1367}+e_{2357}+e_{1457}.
\end{split}
\end{equation*}
We recall that in dimension seven, the Hodge star operator satisfies $*^2=1$  and has the properties
\begin{equation}\label{star}
\begin{split}
*(\beta\wedge\p)=\beta\lrcorner *\p, \quad \beta\in \Lambda^2, \qquad 
*(\beta\wedge *\p)=\beta\lrcorner \p, \quad \beta\in \Lambda^2.
\end{split}
\end{equation}

We let the expressions 
$$
 \p=\frac16\p_{ijk}e_{ijk}, \quad  \ph=\frac1{24}\ph_{ijkl}e_{ijkl}
$$
and have the  identities (c.f. \cite{Br1,Kar1,Kar})
\begin{equation}\label{iden}
\begin{split}
\p_{ijk}\p_{ajk}=6\delta_{ia};\qquad \p_{ijk}\p_{ijk}=42;\\
 \p_{ijk}\p_{abk}=\delta_{ia}\delta_{jb}-\delta_{ib}\delta_{ja}+\ph_{ijab};\quad
\p_{ijk}\ph_{abjk}=4\p_{iab};\\
\p_{ijk}\ph_{kabc}=\delta_{ia}\p_{jbc}+\delta_{ib}\p_{ajc}+\delta_{ic}\p_{abj}-\delta_{aj}\p_{ibc}-\delta_{bj}\p_{aic}-\delta_{cj}\p_{abi}.\\
\end{split}
\end{equation}
A $G_2$ structure on a 7-manifold $M$ is a reduction of the structure group
of the tangent bundle to the exceptional Lie group $G_2$. Equivalently, there exists a nowhere
vanishing differential three-form $\p$ on $M$ and local frames of the cotangent bundle with
respect to which $\p$ takes the form \eqref{g2}. The three-form $\p$ is called the fundamental
form of the $G_2$ manifold $M$ \cite{Bo}.
We will say that the pair $(M, \p)$ is a $G_2$ manifold with $G_2$ structure (determined
by) $\p$.  Alternatively, a $G_2$ structure can be described by the existence of a two-fold
vector cross product  on the tangent spaces of $M$ (see e.g. \cite{Gr}).

It is well known that the fundamental form of a $G_2$ manifold determines a Riemannian metric which  is referred to as the metric induced by $\p$. We write $\LC$ for the associated Levi-Civita
connection. 


The action of $G_2$  on the tangent space induces an
action of $G_2$ on $\Lambda^k(M)$ splitting the exterior algebra into orthogonal subspaces, where
$\Lambda^k_l$ corresponds to an $l$-dimensional $G_2$-irreducible subspace of $\Lambda^k$:
\begin{equation*}
\begin{split}
\Lambda^1(M)=\Lambda^1_7,  \quad \Lambda^2(M)=\Lambda^2_7\oplus\Lambda^2_{14}, \qquad \Lambda^3(M)=\Lambda^3_1\oplus\Lambda^3_7\oplus\Lambda^3_{27},
\end{split}
\end{equation*}
where
\begin{equation}\label{dec2}
\begin{split}
\Lambda^2_7=\{\phi\in \Lambda^2(M) | *(\phi\wedge\p)=2\phi\};\\
\Lambda^2_{14}=\{\phi\in \Lambda^2(M) | *(\phi\wedge\p)=-\phi\}\cong \frak{g}_2;\\
\Lambda^3_1=t\p,\quad t\in \mathbb R;\\
\Lambda^3_7=\{*(\alpha\wedge\p) |  \alpha\in\Lambda^1\}=\{\alpha\lrcorner\ph\};\\
\Lambda^3_{27}=\{\gamma\in \Lambda^3(M) | \gamma\wedge\p=\gamma\wedge\ph=0\}.
\end{split}
\end{equation}
We recall  the next algebraic fact stated  in the proof of \cite[Theorem~5.4]{FI} (see a proof of it in \cite{IS1}) .
\begin{prop}\cite[p. 319]{FI}\label{4-for}
Let $A$ be a 4-form and define the 3-forms $B_X=(X\lrcorner A)$ for any $X\in T_pM$. If  the 3-forms $B_X\in\Lambda^3_{27}$ then the 4-form $A$ vanishes identically, $A=0$
\end{prop}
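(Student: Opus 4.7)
The plan is to work at a point and decompose $A$ according to the $G_2$-irreducible splittings
\[\Lambda^3=\Lambda^3_1\oplus\Lambda^3_7\oplus\Lambda^3_{27},\qquad \Lambda^4=\Lambda^4_1\oplus\Lambda^4_7\oplus\Lambda^4_{27},\]
with $\Lambda^4_1=\mathbb{R}\ph$ and $\Lambda^4_7=\Lambda^1\wedge\p$. Writing $A=t\,\ph+\alpha\wedge\p+A_{27}$, the hypothesis $B_X=X\lrcorner A\in\Lambda^3_{27}$ for every $X$ is equivalent to demanding that the $\Lambda^3_1$- and $\Lambda^3_7$-projections of $B_X$ vanish identically in $X$. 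I would treat these two conditions in turn.

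For the $\Lambda^3_1$-projection I first observe that $X\lrcorner\ph\in\Lambda^3_7$ contributes nothing and that $X\lrcorner A_{27}$ is likewise $\Lambda^3_1$-free by equivariance, since the trivial representation $V_1$ does not appear in $V_7\otimes V_{27}$. A short calculation using the normalisation $\langle X\lrcorner\p,Y\lrcorner\p\rangle=3\,g(X,Y)$ shows that the $\Lambda^3_1$-component of $X\lrcorner(\alpha\wedge\p)=\alpha(X)\,\p-\alpha\wedge(X\lrcorner\p)$ equals $\tfrac{4}{7}\alpha(X)\,\p$. Demanding that this vanish for every $X$ forces $\alpha=0$.

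For the $\Lambda^3_7$-projection, with $\alpha$ now zero, the remaining condition reads
\[t(X\lrcorner\ph)+\mathrm{pr}_{\Lambda^3_7}(X\lrcorner A_{27})=0\quad\text{for every }X\in\Lambda^1.\]
The linear map $F:A_{27}\mapsto\bigl(X\mapsto\mathrm{pr}_{\Lambda^3_7}(X\lrcorner A_{27})\bigr)$ is $G_2$-equivariant from $V_{27}$ into $V_7\otimes V_7=V_1\oplus V_7\oplus V_{14}\oplus V_{27}$, so by Schur's lemma its image lies entirely in the $V_{27}$-summand; on the other hand $X\mapsto t(X\lrcorner\ph)$ is a scalar multiple of the canonical identification $V_7\to V_7$ and lies purely in the $V_1$-summand. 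Projecting the displayed equation onto $V_1$ forces $t=0$ and projecting onto $V_{27}$ forces $A_{27}\in\ker F$.

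The main obstacle is verifying that $F$ itself is nonzero, for only then does Schur guarantee $\ker F=\{0\}$. This does not follow from representation theory alone and must be settled by one explicit computation. A convenient witness is $A_{27}=*\gamma$, where $\gamma$ is the $\Lambda^3_{27}$-component of $e_{123}$, computed to be $4\gamma=3e_{123}-e_{356}+e_{267}-e_{157}$; evaluating $A_{27}\wedge(e_1\lrcorner\p)$ in the standard basis produces a nonzero 6-form, and via the Leibniz identity $(X\lrcorner A_{27})\wedge\p=-A_{27}\wedge(X\lrcorner\p)$ (using $A_{27}\wedge\p=0$ since $\Lambda^4_{27}\wedge\p=0$) this exhibits a nonzero $\Lambda^3_7$-component of $e_1\lrcorner A_{27}$. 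Hence $F\not\equiv 0$, forcing $A_{27}=0$ and therefore $A=0$.
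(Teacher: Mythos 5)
Your argument is correct, and there is nothing in the paper itself to measure it against: the paper does not prove Proposition~\ref{4-for}, it only recalls the statement from \cite[p.~319]{FI} and points to \cite{IS1} for a proof. Your route is the natural representation-theoretic one, and I have checked the numerical details. With $\|\p\|^2=7$ and $\langle X\lrcorner\p,Y\lrcorner\p\rangle=3g(X,Y)$ (both from $\p_{ijk}\p_{ajk}=6\delta_{ia}$ in \eqref{iden}), the $\Lambda^3_1$-part of $X\lrcorner(\alpha\wedge\p)$ is indeed $\tfrac47\alpha(X)\p$, so $\alpha=0$. The Schur step is sound: $V_{27}$ occurs with multiplicity one in $V_7\otimes V_7=V_1\oplus V_7\oplus V_{14}\oplus V_{27}$, while $X\mapsto t(X\lrcorner\ph)$ spans the $V_1$-summand, so the two terms must vanish separately. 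Your witness is also right: $\ph_{4123}=-1$ and $\|e_4\lrcorner\ph\|^2=4$ give exactly $4\gamma=3e_{123}-e_{356}+e_{267}-e_{157}$, which is orthogonal to every $e_l\lrcorner\ph$ and to $\p$, hence lies in $\Lambda^3_{27}$; then $4*\gamma=3e_{4567}-e_{1247}-e_{1345}+e_{2346}$ and $(4*\gamma)\wedge(e_1\lrcorner\p)=2e_{123457}\neq0$, so $F\not\equiv0$ and Schur forces $A_{27}=0$. One small point to make explicit in a write-up: wedging with $\p$ isolates precisely the $\Lambda^3_7$-component because $\Lambda^3_{27}\wedge\p=0$ by definition and $\p\wedge\p=0$ automatically for a form of odd degree. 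For comparison, the more pedestrian argument in the literature replaces your Schur-plus-witness step by showing directly, via the contraction identities \eqref{iden}, that $A\mapsto A_{mijk}\ph_{lijk}$ together with $A\mapsto A_{mijk}\p_{ijk}$ has trivial kernel on $\Lambda^4=\Lambda^4_1\oplus\Lambda^4_7\oplus\Lambda^4_{27}$; your version trades those index manipulations for one clean explicit evaluation, which is a fair exchange.
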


\begin{rmrk}There is another different orientation convention for $G_2$ structures.
In the other convention, the eigenvalues of the operator $\beta\rightarrow *(\beta\wedge\p)$ are -2 and +1 instead of +2 and -1, respectively. 
\end{rmrk}

In~\cite{FG}, Fernandez and Gray divide $G_2$ manifolds into 16 classes
according to how the covariant derivative $\LC\p$ 
behaves with respect to its decomposition into $G_2$ irreducible components
(see also~\cite{CSal,GKMW,Br1}).  If the fundamental form is parallel with respect to
the Levi-Civita connection, $\LC\p=0$,
 then the Riemannian holonomy group is contained
in $G_2$. In this case the induced metric on the
$G_2$ manifold is Ricci-flat, a fact first observed by Bonan~\cite{Bo}.  It
was also shown in \cite{FG} that a $G_2$ manifold
is parallel precisely when the fundamental form is harmonic, i.e. $d\p=d*\p=0$.
The first examples of complete parallel $G_2$ manifolds were constructed by Bryant and
Salamon~\cite{BS,Gibb}.  Compact examples of parallel $G_2$ manifolds were
obtained first by Joyce~\cite{J1,J2,J3} and  with another construction by Kovalev~\cite{Kov}.

The Lee form $\theta$ is defined by \cite{Cabr} (see also \cite{Br})
\begin{equation}\label{g2li}
\theta=-\frac{1}{3}*(* d\p\wedge\p)=\frac13*(*d*\p\wedge *\p)=-\frac13*(\delta\p\wedge*\p)=-\frac13\delta\p\lrcorner\p,
\end{equation}
where $\delta=(-1)^k*d*$ is the codifferential  acting on $k$-forms  and one applies \eqref{star} to get the last identity.

The failure of the holonomy group of the Levi-Civita connection $\sb^g$ of the metric $g$ to reduce to $G_2$
can also be measured by the intrinsic torsion $\tau$, which is identified with $d\p$ an $d*\p=d\ph$, 
and can be decomposed into four basic classes \cite{CSal,Br1}, $\tau \in W_1\oplus W_7 \oplus W_{14}\oplus W_{27}$ which gives another description of the Fern\'andez-Gray classification \cite{FG}.  We list below those of them which
we will use later.

\noindent - $\tau \in W_1$. The class of nearly parallel  (weak holonomy) $G_2$ manifold defined by
$d\p=const.*\p, \quad d*\p=0$.

\noindent - $\tau \in W_7$. The class of locally conformally parallel $G_2$ spaces characterized by
$d*\p=\theta\wedge *\p, \quad d\p=\frac34\theta\wedge\p$.
 - $\tau \in W_{27}$. The class of pure integrable  $G_2$ manifolds determined by  $d\p\wedge\p=0$ and $d*\p=0$.

\noindent - $\tau \in W_1\oplus W_{27}$. The class of cocalibrated $G_2$ manifold, 
determined by the condition $d*\p=0$. 
 
\noindent - $\tau \in W_1\oplus W_7\oplus W_{27}$. The class of integrable $G_2$ manifold determined by the condition $d*\p=\theta\wedge *\p$.  An analog of the Dolbeault cohomology is investigated in \cite{FUg}.  In this class, the exterior derivative of the Lee form lies in the Lee algebra $\frak{g}_2$, $d\theta\in \Lambda^2_{14}\cong\frak{g}_2$ \cite{Kar1}. 
This is the  class which we are interested in.

 \noindent- $\tau \in W_7\oplus W_{27}$. This class is determined by the conditions $d\p\wedge\p=0$ and $d*\p=\theta\wedge*\p$ and is of great interest in supersymmetric heterotic string theories in dimension seven  \cite{GKMW,FI,FI1,GMW,GMPW,OLS}. We call this class \emph{strictly}  integrable   $G_2$ manifolds .

An important sub-class of the integrable $G_2$ manifolds is determined in the next
\begin{dfn}\label{ctype}
An integrable $G_2$ structure is said to be of constant type if the function $(d\p,*\p)=const$.
\end{dfn}
For example, the nearly parallel as well as the strictly integrable $G_2$ manifolds are integrable of constant type. The integrable $G_2$ manifolds of constant type appear also in  the $G_2$ heterotic supergravity where  the constant  $(d\p,*\p)$ is interpreted as the AdS radius \cite{Oss1,Oss2} see also  \cite[Section~5.2.1]{AMP}.

If the Lee form of an integrable $G_2$ structure  vanishes, $\theta=0$ then the $G_2$ structure is
co-calibrated. If the Lee form of an integrable $G_2$ structure is closed,
$d\theta=0$ then the $G_2$ structure is locally conformally
equivalent to a co-calibrated one \cite{FI1} (see also \cite{Kar1}) and if the Lee form is an exact form then it is (globally) conformal to a co-calibrated one.  
It is known due to  \cite[Theorem~3.1]{FI1} that for any  integrable $G_2$ structure  on a compact manifold, there exists a unique integrable $G_2$ structure conformal to the original one with co-closed Lee form,  called \emph{the Gauduchon $G_2$ structure}.

We recall the following
\begin{dfn}
The curvature $R$ of a linear connection on a $G_2$ manifold  is a $G_2$-instanton if  the curvature 2-form lies in the Lie algebra $\frak{g}_2\cong\Lambda^2_{14}$. This is equivalent to  the  identities:
\begin{equation}\label{rri}
R_{abij}\p_{abk}=0 \Longleftrightarrow R_{abij}\ph_{abkl}=-2R_{klij}.
\end{equation}
\end{dfn}
\section{The $G_2$-connection with skew-symmetric torsion}
The necessary and sufficient  conditions   a 7-dimensional manifold with a $G_2$ structure to admit a metric connection with torsion 3-form preserving the $G_2$ structure are found in \cite{FI} ( see also \cite{GKMW,FI1,GMW,GMPW}). 
\begin{thrm}\cite[Theorem~4.8]{FI}\label{cythm1}
Let $(M,\p)$ be a  smooth  manifold with a $G_2$ structure $\p$.

The next two conditions are equivalent
\begin{enumerate}
\item[a)] The $G_2$ structure $\p$ is integrable,
\begin{equation}\label{cycon}
\begin{split}
d*\p=\theta\wedge *\p.
\end{split}
\end{equation}
\item[b)] There exists a unique $G_2$-connection $\sb$ with torsion 3-form preserving the $G_2$ structure,

$
\sb g=\sb\p=\sb\ph=0.
$ 
The torsion of $\sb$ is given by
\begin{equation}\label{torcy}
\begin{split}
T=-*d\p +*(\theta\wedge\p)+\frac{1}{6}(d\p,*\p)\p.
\end{split}
\end{equation}
\end{enumerate}
\end{thrm}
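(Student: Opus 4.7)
The plan is to argue the two implications separately, with the main tools being the $G_2$-irreducible decomposition \eqref{dec2} of $\Lambda^\bullet(M)$ and the standard identity expressing the exterior derivative of a $p$-form in terms of any metric connection $\sb$ with torsion $T$,
\[
d\omega(X_0,\dots,X_p)=\sum_i(-1)^i(\sb_{X_i}\omega)(X_0,\dots,\widehat{X_i},\dots,X_p)+\sum_{i<j}(-1)^{i+j}\omega(T(X_i,X_j),X_0,\dots,\widehat{X_i},\dots,\widehat{X_j},\dots,X_p).
\]
I would apply this formula to $\p$ and to $\ph=*\p$, both of which are $\sb$-parallel in part (b).

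For the implication (b)$\Rightarrow$(a), under $\sb\p=\sb\ph=0$ the first sums vanish, and $d\p$, $d\ph$ become algebraic expressions built from $T$, $\p$ and $\ph$ alone. The key structural input is that the $\mathfrak{so}(7)$-action on $\p$ (as a derivation on each tensor index) kills $\mathfrak{g}_2\cong\Lambda^2_{14}$ and so factors through $\mathfrak{so}(7)/\mathfrak{g}_2\cong\Lambda^2_7$, whose image in $\Lambda^3$ is a $G_2$-invariant $7$-dimensional subspace, necessarily $\Lambda^3_7$. Dualising, the analogous statement for $\ph$ forces $d\ph\in\Lambda^5_7=*\Lambda^2_7$, i.e., $d*\p=\theta\wedge*\p$ for some $1$-form $\theta$. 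Contracting this equation with $\p$ and using the identity $*(\alpha\wedge*\p)=\alpha\lrcorner\p$ from \eqref{star} then identifies $\theta$ with the Lee form of \eqref{g2li}.

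For (a)$\Rightarrow$(b), one starts from integrability and constructs $T$ explicitly by \eqref{torcy}. The three summands are organised by the $G_2$-decomposition of $\Lambda^3$: $\tfrac{1}{6}(d\p,*\p)\,\p$ lies in $\Lambda^3_1$, $*(\theta\wedge\p)$ equals $\theta\lrcorner\ph\in\Lambda^3_7$ by \eqref{star}, and $-*d\p$ carries the $\Lambda^3_{27}$-part together with the corrections needed to match the $\Lambda^3_1$- and $\Lambda^3_7$-components dictated by \eqref{cycon}. The connection is then $\sb:=\LC+\tfrac{1}{2}T$, for which $\sb g=0$ holds automatically because $T$ is a $3$-form; the substantive task is to verify $\sb\p=0$, which amounts to checking pointwise that this choice of $T$ inverts the algebraic relation $\LC_X\p=-\tfrac{1}{2}(X\lrcorner T)\cdot\p$ encountered in the previous paragraph. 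Once $\sb\p=0$ is established, $\sb\ph=0$ follows because $\ph=*\p$ and $\sb$ preserves the metric.

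Uniqueness is obtained by subtraction: if $T$ and $T'$ are two such torsions, $S:=T-T'$ is a $3$-form satisfying $(X\lrcorner S)\cdot\p=0$ for every vector $X$, i.e., $X\lrcorner S\in\Lambda^2_{14}=\mathfrak{g}_2$ for all $X$; a short check using the contraction identities \eqref{iden} shows the only such $3$-form is $S=0$. The principal obstacle I expect is exactly the existence verification, namely that the explicit formula \eqref{torcy} does yield $\sb\p=0$: although conceptually transparent at each isotypic level, it requires careful bookkeeping of alternations and repeated applications of the contraction identities \eqref{iden} relating $\p$ and $\ph$.
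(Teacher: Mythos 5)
First, a point of reference: the paper does not prove this statement at all --- it is quoted verbatim as \cite[Theorem~4.8]{FI} and used as a black box, so there is no in-paper proof to measure your attempt against. Judged on its own terms, your outline follows the standard representation-theoretic route of Friedrich--Ivanov. The direction (b)$\Rightarrow$(a) is sound: with $\sb\ph=0$ the expression for $d\ph$ is a $G_2$-equivariant algebraic map $\Lambda^3\cong\Lambda^3_1\oplus\Lambda^3_7\oplus\Lambda^3_{27}\to\Lambda^5\cong\Lambda^5_7\oplus\Lambda^5_{14}$, and Schur's lemma forces the image into $\Lambda^5_7=\{\alpha\wedge*\p\}$, giving \eqref{cycon}; contracting with $\p$ via \eqref{iden} then pins down $\theta$ as the Lee form \eqref{g2li}. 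The uniqueness argument (a $3$-form $S$ with $X\lrcorner S\in\Lambda^2_{14}$ for all $X$ must vanish) is also correct and checkable irrep by irrep with \eqref{iden}.

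The genuine gap is in (a)$\Rightarrow$(b). You write down $T$ by \eqref{torcy}, set $\sb=\LC+\tfrac12T$, and defer the verification $\sb\p=0$ as ``careful bookkeeping''; but as stated your recipe would apply to \emph{any} $G_2$-structure, and the theorem is an equivalence precisely because it fails outside the integrable class. What is missing is the mechanism by which \eqref{cycon} makes the verification succeed: the intrinsic torsion $\LC\p$ lives in $\Lambda^1\otimes\Lambda^2_7\cong W_1\oplus W_7\oplus W_{14}\oplus W_{27}$ (dimension $49$), while the $G_2$-equivariant map sending a $3$-form $T$ to the induced correction $-\tfrac12(X\lrcorner T)\cdot\p$ has source $\Lambda^3$ of dimension $35=1+7+27$; your uniqueness computation shows this map is injective, so by Schur and a dimension count it is an isomorphism onto $W_1\oplus W_7\oplus W_{27}$ and can never reach $W_{14}$. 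Hence a compatible skew torsion exists if and only if the $W_{14}$-component of $\LC\p$ vanishes, and identifying that obstruction with the failure of $d*\p=\theta\wedge*\p$ is the substantive step (it is also what produces the specific coefficients $-*d\p$, $*(\theta\wedge\p)$, $\tfrac16(d\p,*\p)\p$ in \eqref{torcy}). You have all the ingredients on the table --- the injectivity from your uniqueness paragraph and the decomposition \eqref{dec2} --- but without assembling them into this obstruction-theoretic statement the existence half remains an unproved claim rather than a proof.
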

The unique linear connection $\sb$  preserving the $G_2$ structure with totally skew-symmetric torsion is called \emph{the characteristic connection}.  The curvature and the Ricci tensor of $\sb$  will be called \emph{characteristic curvature} and \emph{characteristic Ricci tensor}, respectively.

If the $G_2$ structure is nearly parallel then  the torsion is parallel with respect to the characteristic connection, $\sb T=0$  \cite{FI}.


\subsection{The torsion and the Ricci tensor of the characteristic connection}
We obtain from \eqref{torcy} using \eqref{star} that
\begin{equation}\label{torcy1}
\begin{split}
T=-*d\p +*(\theta\wedge\p)+\frac{1}{6}(d\p,\ph)\p 
=-*d*\Phi-\theta\lrcorner\Phi+\frac{1}{6}(d\p,\ph)\p
=-\delta\Phi-\theta\lrcorner\Phi+\frac{1}{6}(d\p,\ph)\p.
\end{split}
\end{equation}
Write $\delta\ph$  in terms $\LC$  and then in terms of $\sb$ using \eqref{tsym} and  $\sb\Phi=0$ to get 
\begin{equation}\label{dphi}
\begin{split}
-\delta\Phi_{klm}
=-\frac12T_{jsk}\Phi_{jslm}+\frac12T_{jsl}\Phi_{jskm}-\frac12T_{jsm}\Phi_{jskl}.
\end{split}
\end{equation}
Substituting \eqref{dphi} into \eqref{torcy1}, we obtain  the following formula of the 3-form torsion $T$ from \cite{IS1},
\begin{equation}\label{torcy2}
T_{klm}=-\frac12T_{jsk}\Phi_{jslm}+\frac12T_{jsl}\Phi_{jskm}-\frac12T_{jsm}\Phi_{jskl}-\theta_s\Phi_{sklm}+\lambda\p_{klm}, 
\end{equation}
where the function $\lambda$ is defined by the  scalar product
\begin{equation}\label{lm}
\lambda =\frac16(d\p,\ph)=\frac1{42}d\p_{ijkl}\ph_{ijkl}=\frac1{36}\delta\ph_{klm}\p_{klm}.
\end{equation}
Applying \eqref{iden},  it is easy to check from \eqref{g2li} and \eqref{torcy2} that  $\theta$ and $\lambda$ can be written in terms of $T$ 
\begin{equation}\label{tit}
\begin{split}
\theta_i=\frac16T_{jkl}\Phi_{jkli}, \qquad \lambda=\frac16T_{klm}\p_{klm}.
\end{split}
\end{equation}
Similarly, we obtain the next identities
\begin{equation}\label{lmt0}
\begin{split}
T_{kli}\p_{klj}-T_{klj}\p_{kli}=-2\theta_s\p_{sij}.\\
\sigma^T_{iabc}\p_{abc}=-3T_{abs}\p_{abc}T_{sci}=3\theta_s\p_{skt}T_{kti}.
\end{split}
\end{equation}
Denote by $d^{\sb}\theta$ the skew-symmetric part of $\sb\theta$,  $d^{\sb}\theta(X,Y)=(\sb_X\theta)Y-(\sb_Y\theta)X$. 

The next result is observed in \cite[(4.16)]{IS1} and we include here the proof for completeness. 

\begin{prop}\cite[(4.16)]{IS1}\label{pp1}
On an integrable     $G_2$ manifold $(M,\p)$ the co-differential of the torsion is given by
\begin{equation}\label{deltaT}
\delta T=d^{\sb}\theta-d\lambda\lrcorner\p.
\end{equation}
\end{prop}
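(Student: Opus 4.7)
The plan is to compute $\delta T$ directly by applying $\delta=-{*}d{*}$ to the explicit torsion formula \eqref{torcy}, $T=-{*}d\p+{*}(\theta\wedge\p)+\lambda\p$, and simplifying each piece with the Hodge star identities \eqref{1star}, the integrability condition \eqref{cycon}, and the well-known fact that $d\theta\in\Lambda^2_{14}$ on an integrable $G_2$ manifold.

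First I would handle the three summands separately. The first is immediate: $\delta(-{*}d\p)=-{*}d{*}{*}d\p=\pm{*}dd\p=0$, so it contributes nothing. For the second, $\delta({*}(\theta\wedge\p))=-{*}d(\theta\wedge\p)=-{*}(d\theta\wedge\p)+{*}(\theta\wedge d\p)$. The first of these cleans up using \eqref{star} together with $d\theta\lrcorner\Phi=-d\theta$ (valid because $d\theta\in\Lambda^2_{14}$), yielding $-{*}(d\theta\wedge\p)=-d\theta\lrcorner\Phi=d\theta$. For the third summand, I use the Leibniz-type rule $\delta(f\alpha)=f\delta\alpha-{*}(df\wedge{*}\alpha)$ together with the identity $\delta\p=-{*}d\Phi=-{*}(\theta\wedge\Phi)$ coming from integrability, giving
\begin{equation*}
\delta(\lambda\p)=-{*}(d\lambda\wedge\Phi)+\lambda\delta\p=-d\lambda\lrcorner\p-\lambda{*}(\theta\wedge\Phi),
\end{equation*}
where the first equality uses identity (B) from \eqref{1star} applied to $\beta=\p$. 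Summing the three contributions gives
\begin{equation*}
\delta T=d\theta+{*}(\theta\wedge d\p)-\lambda{*}(\theta\wedge\Phi)-d\lambda\lrcorner\p.
\end{equation*}

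The remaining step is to recognize the middle two terms as $-\theta\lrcorner T$. Since $\sb=\LC+\tfrac12 T$, a standard computation shows $d^{\sb}\theta=d\theta-\theta\lrcorner T$, so it suffices to verify ${*}(\theta\wedge d\p)-\lambda{*}(\theta\wedge\Phi)=-\theta\lrcorner T$. Plugging the torsion formula \eqref{torcy} into $\theta\lrcorner T$, the middle summand drops out because $\theta\wedge\theta=0$, and \eqref{1star} converts $\theta\lrcorner({*}d\p)$ into ${*}(\theta\wedge d\p)$, producing $\theta\lrcorner T=-{*}(\theta\wedge d\p)+\lambda(\theta\lrcorner\p)$. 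The final identification $\theta\lrcorner\p={*}(\theta\wedge\Phi)$ is another direct application of \eqref{1star} with $\beta=\p$. Putting the pieces together yields $\delta T=d^{\sb}\theta-d\lambda\lrcorner\p$ as claimed.

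The main obstacle is purely bookkeeping: sign tracking across the various interior-product/Hodge-star identities \eqref{1star} in dimension seven on forms of differing degrees, and being confident that the particular case of the Leibniz rule for $\delta$ on $\lambda\p$ produces exactly the $d\lambda\lrcorner\p$ expected on the right-hand side. The conceptual input is minimal: the only nontrivial structural fact used is $d\theta\in\Lambda^2_{14}$ for integrable $G_2$-manifolds, and this is cited from \cite{Kar1}.
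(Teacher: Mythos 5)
Your proposal is correct and follows essentially the same route as the paper: apply $\delta=-{*}d{*}$ to the torsion formula \eqref{torcy}, use the integrability condition, the identities \eqref{1star}, and the fact that $d\theta\in\Lambda^2_{14}$, then convert $d\theta-\theta\lrcorner T$ into $d^{\sb}\theta$ via \eqref{ttg2}. The only cosmetic difference is that the paper handles the term ${*}(\theta\wedge d\p)$ by rewriting it as $\theta\lrcorner\delta\ph$ and invoking \eqref{torcy1}, whereas you identify ${*}(\theta\wedge d\p)-\lambda{*}(\theta\wedge\Phi)=-\theta\lrcorner T$ by contracting \eqref{torcy} with $\theta$ directly; the two manipulations are equivalent.
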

\begin{proof}
We calculate from \eqref{torcy} using \eqref{star}, \eqref{cycon},  \eqref{dec2} and the fact obseved in \cite{Kar1} that $d\theta\in\Lambda^2_{14}$ 
\begin{multline}\label{rrr}
-\delta T=*d*T=*(d\theta\wedge\p)-*(\theta\wedge d\p)+*(d\lambda\wedge\ph) +*(\lambda\theta\wedge\ph)\\=-d\theta-*(\theta\wedge d\p)+*[d\lambda+\lambda\theta)\wedge\ph]=-d\theta-\theta\lrcorner*d\p+(d\lambda+\lambda\theta)\lrcorner\p\\=-d\theta-\theta\lrcorner\delta\ph+(d\lambda+\lambda\theta)\lrcorner\p=-d\theta+\theta\lrcorner T-\lambda\theta\lrcorner\p+(d\lambda+\lambda\theta)\lrcorner\p=-d\theta+\theta\lrcorner T+d\lambda\lrcorner\p, 
\end{multline}
where we have applied \eqref{torcy1} in the third line.

On the other hand, \eqref{tsym} yields
\begin{equation}\label{ttg2}
d\theta=d^{\sb}\theta+\theta\lrcorner T,
\end{equation}
 which substituted into \eqref{rrr} gives \eqref{deltaT}.
 \end{proof}

We obtain from Proposition~\ref{pp1} and \eqref{rics} that 
on an integrable     $G_2$ manifold $(M,\p)$ the characteristic Ricci tensor  is symmetric, $Ric(X,Y)=Ric(Y,X)$ if and only if the two form $d^{\sb}\theta$  is given by
 \begin{equation}\label{newsymth}
d^{\sb}\theta=d\lambda\lrcorner\p \in\Lambda^2_7. 
 \end{equation}



Explicit formulas of the characteristic Ricci tensor of an integrable $G_2$ manifold are presented in \cite{FI,FI1}. Below, we give the proof from \cite{IS1} for completeness. We have
\begin{thrm}\cite{FI, FI1}\label{thRic}
The characteristic Ricci tensor $Ric$ and its scalar curvature $Scal$  are given by
\begin{equation}\label{ricg2}
\begin{split}
Ric_{ij}=\frac1{12}dT_{iabc}\ph_{jabc}-\sb_i\theta_j, \quad
Scal=3\delta\theta+2||\theta||^2-\frac13||T||^2+2\lambda^2.
\end{split}
\end{equation}
The next  identities hold
\begin{equation}\label{dtt}
\begin{split}
dT_{iabc}\p_{abc}+2\sb_iT_{abc}\p_{abc}=
dT_{iabc}\p_{abc}+12d\lambda_i=0;\\
 3\sb_aT_{bci}\p_{abc}=2\sigma^T_{iabc}\p_{abc}+18d\lambda_i=6\theta_sT_{skt}\p_{kti}+18d\lambda_i.
\end{split}
\end{equation}
\end{thrm}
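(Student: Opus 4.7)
The proof rests on three ingredients: the first Bianchi identity \eqref{1bi} for the characteristic connection; the fact that $\sb\p=0$ forces the last pair of indices of the curvature to satisfy the $G_2$-instanton relations \eqref{rri}, i.e.\ $R_{abij}\p_{ijk}=0$ and $R_{abij}\ph_{ijkl}=-2R_{abkl}$; and the explicit formulae \eqref{torcy1}--\eqref{torcy2} expressing $T$ in terms of $\theta$, $\lambda$ and the contractions of $T$ with $\Phi$. The key algebraic tool throughout is \eqref{iden}.

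\textbf{The Ricci formula.} I would apply \eqref{1bi} with $X=e_a$, $Y=e_b$, $Z=e_c$, $V=e_j$ and contract with $\ph_{iabc}$. On the left-hand side, the cyclic sum of three curvature terms collapses, using the instanton relations \eqref{rri} on the last pair of indices (hence the pair $(a,b)$ after relabeling) together with the identity $\ph_{iabc}\ph_{jabc}=24\delta_{ij}$ from \eqref{iden}, to a constant multiple of the characteristic Ricci tensor $Ric_{ij}$. On the right-hand side, the term $(\sb_V T)(X,Y,Z)\ph_{iabc}$ yields $(\sb_j T_{abc})\ph_{iabc}$, which by differentiating the first relation in \eqref{tit}, namely $\theta_p = \tfrac{1}{6}T_{abc}\ph_{abcp}$, and using $\sb\ph=0$, equals $6\,\sb_j\theta_i$. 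The $dT$-term gives $dT_{abcj}\ph_{iabc}$, and the $\sigma^T$-term contracts to an expression that reassembles with the $dT$ contribution; combining and rescaling produces $Ric_{ij}=\tfrac{1}{12}dT_{iabc}\ph_{jabc}-\sb_i\theta_j$.

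\textbf{The scalar curvature and the identities \eqref{dtt}.} Taking the trace $Scal=Ric_{ii}$ in the Ricci formula, the leading term $\tfrac{1}{12}dT_{iabc}\ph_{iabc}$ splits via $dT=d^{\sb}T+2\sigma^T$ from \eqref{dh}: the $d^{\sb}T$ part contributes a $\delta\theta$-term through another application of $\sb\theta = \sb(\tfrac16 T\cdot\ph)$, while the $\sigma^T$ part, evaluated by its definition \eqref{sigma} and the contractions in \eqref{iden}, produces $-\tfrac{1}{3}\|T\|^2+2\lambda^2$. Together with $\sb_i\theta_i = -\delta\theta$ and the symmetrization relation implied by \eqref{rics} and \eqref{deltaT}, this assembles into $3\delta\theta+2\|\theta\|^2-\tfrac{1}{3}\|T\|^2+2\lambda^2$. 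For \eqref{dtt}, I would start from $\lambda=\tfrac{1}{6}T_{klm}\p_{klm}$ in \eqref{tit}: applying $\sb_i$ and using $\sb\p=0$ immediately gives $(\sb_i T_{klm})\p_{klm}=6\,d\lambda_i$. The identity $dT_{iabc}\p_{abc}+12\,d\lambda_i=0$ then follows by rewriting $dT=d^{\sb}T+2\sigma^T$, using the definition of $d^{\sb}T$ as the alternation of $\sb T$ from \eqref{dh}, and eliminating the $\sigma^T$-contraction via the second line of \eqref{lmt0}. The second line of \eqref{dtt} is the same identity rewritten after cyclically permuting the free index through the alternation and substituting $\sigma^T_{iabc}\p_{abc}=3\theta_s\p_{skt}T_{kti}$ again from \eqref{lmt0}.

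\textbf{Main obstacle.} The real difficulty is combinatorial: the contractions $\p_{ijk}\ph_{kabc}$ in \eqref{iden} produce six $\delta\,\p$ terms, and these have to be tracked through the Bianchi identity so that the $\mathfrak{g}_2$-structure of the last pair of $R$-indices cleanly isolates $Ric_{ij}$ on the left while leaving only $dT\cdot\ph$ and $\sb\theta$ on the right. A secondary subtlety is the asymmetric sign pattern in the definition of $d^{\sb}T$ in \eqref{dh} (three $+$ and one $-$), which must propagate correctly through the contraction with $\ph$ in order for the final sign $-\sb_i\theta_j$ to appear.
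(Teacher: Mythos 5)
There is a genuine gap in your derivation of the Ricci formula. You contract the first Bianchi identity \eqref{1bi}, whose cyclic sum runs over the first three arguments with $V=e_j$ held in the \emph{last} slot, against $\ph_{iabc}$; the curvature side is then $\big[R_{abcj}+R_{bcaj}+R_{cabj}\big]\ph_{iabc}=3R_{abcj}\ph_{iabc}$, in which $\ph$ contracts the \emph{first} pair $(a,b)$ of $R$ together with only one index $c$ of the last pair. The relation coming from $\sb\p=0$ (the paper's \eqref{rr}) constrains only the \emph{last} pair of $R$; the corresponding relation on the first pair is precisely the $G_2$-instanton condition \eqref{rri}, which is \emph{not} a hypothesis of Theorem~\ref{thRic} — the theorem holds for every integrable $G_2$ manifold. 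So the contraction $3R_{abcj}\ph_{iabc}$ does not collapse to a multiple of $Ric_{ij}$ by the means you invoke (and the identity $\ph_{iabc}\ph_{jabc}=24\delta_{ij}$ plays no role here). Reducing $R_{abcj}\ph_{iabc}$ to Ricci terms without the instanton hypothesis requires combining \eqref{1bi} with \eqref{gen}, which is exactly how the paper arrives at \eqref{1bi1}: that version of the Bianchi identity fixes $V$ in the \emph{first} slot and cycles the last three arguments, so that $R_{iabc}\ph_{jabc}$ contracts the full last pair $(b,c)$ with $\ph$ and the holonomy relation \eqref{rr} yields $2Ric_{ij}$ at once; it also places the derivative on the correct index, giving $\sb_i\theta_j$ rather than its transpose (which matters, since $Ric$ is not symmetric in general).

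A second, logical problem concerns \eqref{dtt}. You propose to obtain the first line from $dT=d^{\sb}T+2\sigma^T$ and \eqref{lmt0}; unwinding this, the step needs exactly $3\sb_aT_{bci}\p_{abc}=2\sigma^T_{iabc}\p_{abc}+18\,d\lambda_i$, which is the second line of \eqref{dtt} — and you then declare the second line to be ``the same identity rewritten''. The argument is circular. The first line is not a purely algebraic consequence of \eqref{tit} and \eqref{lmt0}: the paper obtains it from the curvature input $0=R_{iabc}\p_{abc}$ (again the holonomy relation \eqref{rr}, contracted via \eqref{1bi1}), and only then deduces the second line using \eqref{dh} and \eqref{lmt0}. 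Finally, a smaller point: in the scalar curvature the term $2\|\theta\|^2$ arises from the contraction $\sigma^T_{jabc}\ph_{jabc}=-2\|T\|^2+12\|\theta\|^2+12\lambda^2$ of \eqref{ng4}, whose evaluation needs the structure equation \eqref{torcy2} for $T$, not merely the definition \eqref{sigma} and the identities \eqref{iden}; it does not come from the symmetrization relation \eqref{rics}.
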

\begin{proof}
 Since $\sb\p=0$ the holonomy group of the characteristic connection lies in the Lie algebra $g_2$, i.e. 
\begin{equation}\label{rr}
\begin{split}
R_{ijab}\p_{abk}=0 \Longleftrightarrow R_{ijab}\ph_{abkl}=-2R_{ijkl}.
\end{split}
\end{equation}
We have from \eqref{rr} using \eqref{1bi1}, \eqref{tit} and \eqref{dh}  that the Ricci tensor $Ric$ of  $\sb$ is given by
\begin{multline}\label{ricdt}
2Ric_{ij}=R_{iabc}\ph_{jabc}=\frac13\big[R_{iabc}+R_{ibca}+R_{icab} \big]\ph_{jabc}=\frac16dT_{iabc}\ph_{jabc}+\frac13\sb_iT_{abc}\ph_{jabc}.
\end{multline}
Apply \eqref{tit} to complete the proof of the first identity in \eqref{ricg2}.
Similarly, we have
\[0=R_{iabc}\p_{abc}=\frac13\big[R_{iabc}+R_{ibca}+R_{icab} \big]\p_{abc}=\frac16dT_{iabc}\p_{abc}+\frac13\sb_iT_{abc}\p_{abc}\]
which proves the first equality in \eqref{dtt}. Apply \eqref{dh} to achieve the second and \eqref{lmt0} to get the third.

We obtain  from \eqref{torcy2} using \eqref{iden} 
\begin{equation}\label{ng4}
\begin{split}
\sigma^T_{jabc}\ph_{jabc}=3T_{jas}T_{bcs}\ph_{jabc}=-2||T||^2+12||\theta||^2+12\lambda^2
\end{split}
\end{equation}
We calculate from \eqref{dh} applying \eqref{tit}, \eqref{ng4} 
\begin{equation}\label{g221}
\begin{split}
dT_{jabc}\ph_{jabc}=4\sb_jT_{abc}\ph_{jabc}+2\sigma^T_{jabc}\ph_{jabc}=-24\sb_j\theta_j-4||T||^2+24||\theta||^2+24\lambda^2.
\end{split}
\end{equation}
Take the trace in the first identity in \eqref{ricg2} substitute  \eqref{g221} into the obtained equality and use \eqref{lm} 
to get the second identity in \eqref{ricg2}. 
\end{proof}
\begin{rmrk}\label{remg21}
It follows from \eqref{ricg2}, \eqref{dh}, \eqref{sigma} and \eqref{inst6} that if  $\sb T=0$ then $\delta T=  \sb Ric=\sb dT=0$ and $d(Scal)=0$.
\end{rmrk}
\begin{rmrk}
The Riemannian Ricci tensor and the Riemannian scalar curvature of a general $G_2$ manifold are calculated in \cite{Br1}.
\end{rmrk}

\section{$G_2$-instanton. Proof of Theorem~\ref{instm}, Theorem~\ref{instqjm}  and Theorem~\ref{cmpga}}
We show the following
\begin{thrm}\label{maing2}
Let $(M,\p)$ be  a compact integrable $G_2$ manifold. The next two conditions are equivalent:
\begin{itemize}
\item[a)] The torsion 3-form is parallel with respect to the characteristic connection, $\sb T=0$.
\item[b)] The  curvature of the characteristic  connection $\sb$   is a $G_2$-instanton and $d^{\sb}T=0$.
\end{itemize}
\end{thrm}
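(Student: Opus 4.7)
The forward direction (a)$\Rightarrow$(b) is immediate from Lemma~\ref{4form} and the holonomy reduction. If $\sb T=0$, then $d^{\sb}T = 4\,\mathrm{Alt}(\sb T) = 0$ trivially, and the vanishing $\sb T$ is tautologically a $4$-form, so by Lemma~\ref{4form} the curvature is pair-exchange symmetric, $R(X,Y,Z,V)=R(Z,V,X,Y)$. Combining this with $R\in\Lambda^2\otimes\mathfrak{g}_2$ (from $\sb\p=0$) yields $R\in\mathfrak{g}_2\otimes\mathfrak{g}_2$, so $\sb$ is a $G_2$-instanton.

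For the converse (b)$\Rightarrow$(a) my plan is to use compactness to force $\sb\theta=0$, after which Theorem~\ref{instm} applied to the integrable $G_2$ structure with parallel Lee form and instanton curvature gives $\sb T=0$. First rewrite the hypothesis $d^{\sb}T=0$ as $dT=2\sigma^T$ via \eqref{inst6}. Substituting into the first Bianchi identity \eqref{1bi1}, and using both the instanton identity $R_{abij}\p_{abk}=0$ and the holonomy identity $R_{ijab}\p_{abk}=0$, produces an algebraic relation between cyclic sums of $R$, $\sigma^T$ and $\sb T$. Contracting this relation with $\p$ and $\ph$ and combining with the Ricci formula \eqref{ricg2}, the trace identities \eqref{dtt}, \eqref{lmt0}, and Proposition~\ref{pp1} (which gives $\delta T = d^{\sb}\theta - d\lambda\lrcorner\p$), one expresses the symmetric and skew parts of $\sb\theta$ and the differential $d\lambda$ as explicit $G_2$-equivariant contractions of $\sb T$ together with polynomials in $T,\theta,\lambda$.

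The compactness step: integrate the resulting pointwise identity over $M$. Because $\delta^{\sb}\alpha=\delta\alpha$ on $1$-forms (since $T_{iis}=0$), divergence contributions from the torsion connection drop out cleanly under Stokes' theorem. The target is an equality of the form $\int_M|\sb\theta|^2\,\mathrm{vol} = 0$, possibly bundled with $\int_M|d\lambda|^2\,\mathrm{vol}=0$, with each piece manifestly non-negative; this would force $\sb\theta=0$ (and incidentally $d\lambda=0$, so the $G_2$ structure is of constant type). Theorem~\ref{instm} then applies and delivers $\sb T=0$.

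\textbf{Main obstacle.} The crux is arranging the contracted Bianchi--instanton identity so that, after integration, the integrand is sign-definite in $\sb\theta$ rather than an indefinite cross-term combination. This requires that, after substituting $dT=2\sigma^T$ and applying the instanton condition, the $\mathfrak{g}_2$-curvature terms collapse and the torsion corrections from $\sigma^T$ absorb exactly into the $\Lambda^2_7\oplus\Lambda^2_{14}$-decomposition of $d^{\sb}\theta$ governed by Proposition~\ref{pp1} and \eqref{newsymth}; controlling the $\Lambda^3_{27}$ part of $\sb T$ and using Proposition~\ref{4-for} to rule out spurious 4-form contributions is where the $G_2$ representation theory, rather than a generic Weitzenb\"ock computation, does the work.
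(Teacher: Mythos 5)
Your forward direction (a)$\Rightarrow$(b) is correct and is exactly the paper's argument. The converse, however, has a genuine gap. Your plan is to first force $\sb\theta=0$ by an integral identity and then invoke Theorem~\ref{instm}, but the key step --- that compactness, the instanton condition and $d^{\sb}T=0$ alone yield $\sb\theta=0$ (and $d\lambda=0$) --- is never carried out, and it is precisely the hard part. In the paper, a statement of that kind (Theorem~\ref{RG1}) is only proved under the \emph{additional} hypotheses that the structure is of constant type and in the Gauduchon gauge $\delta\theta=0$; even then the argument is a strong maximum principle applied to $\Delta\|\theta\|^2+3\nabla_\theta\|\theta\|^2=-2\|\sb\theta\|^2$, not a plain integration, because the first-order drift terms do not integrate away without $\delta\theta=0$. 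Moreover, within the paper's logical structure your reduction is circular: Theorem~\ref{instm} is itself deduced from Corollary~\ref{main3}, which is extracted from the proof of the very theorem you are trying to prove, so you would need an independent proof of Theorem~\ref{instm} that you do not supply.

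The paper's actual converse avoids $\theta$ and $\lambda$ entirely and runs a maximum principle on $\|T\|^2$ instead. Contracting the second Bianchi identity \eqref{bi22} with $\ph_{ijkp}$ and using the instanton condition together with \eqref{torcy2} gives the pointwise identity $\sb_iR_{iplm}=\theta_rR_{rplm}$. Pairing this with $T_{plm}$, expressing $R_{iplm}T_{plm}$ via the first Bianchi identity \eqref{1bi1} under $d^{\sb}T=0$ and $dT=2\sigma^T$, and using $\sigma^T_{ijkl}T_{ijk}=0$, one arrives at $\Delta\|T\|^2-\nabla_\theta\|T\|^2=-2\|\sb T\|^2\le 0$; the strong maximum principle on the compact $M$ then forces $\sb T=0$ directly. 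If you want to salvage your route, you would have to either prove Theorem~\ref{instm} from scratch without Corollary~\ref{main3}, or exhibit the sign-definite integral identity for $\sb\theta$ that you only conjecture in your ``main obstacle'' paragraph; as written, neither is done.
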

\begin{proof}
If $\sb T=0$ then clearly $d^{\sb}T=\delta T=\sb\theta=d(Scal)=0$. 
Moreover, \eqref{4form} shows that the characteristic curvature $R\in S^2\Lambda^2$ and therefore $R$ is a $G_2$ instanton since $\sb\p=0$ which proves b).

For the converse, we first prove 
\begin{lemma}
If on an integrable $G_2$ manifold  the  curvature of the characteristic  connection $\sb$    is a $G_2$-instanton then the next equality holds true
\begin{equation}\label{bi24}
\sb_iR_{iplm}=\theta_rR_{rplm}.
\end{equation}
\end{lemma}
\begin{proof}
The second Bianchi identity for $\sb$ reads (see e.g. \cite{IS})
\begin{equation}\label{bi22}
\sb_iR_{jklm}+\sb_jR_{kilm}+\sb_kR_{ijlm}+T_{ijs}R_{sklm}+T_{jks}R_{silm}+T_{kis}R_{sjlm}=0.
\end{equation}
Multiplying \eqref{bi22} with $\ph_{ijkp}$ and using the $G_2$-instanton conditions \eqref{rri}, we obtain
\begin{equation}\label{bi23}
-6\sb_iR_{iplm}+3T_{ijs}R_{sklm}\ph_{ijkp}=0.
\end{equation}
An application of  \eqref{torcy2} together with \eqref{rri} to the second term in \eqref{bi23} yields
\begin{multline*}
T_{ijs}\ph_{ijkp}R_{sklm}=\Big[-T_{ijk}\ph_{ijps}-T_{ijp}\ph_{ijsk}-2T_{skp}-2\theta_r\ph_{rskp}+2\lambda\p_{skp}  \Big]R_{sklm}\\
=-T_{ijk}\ph_{ijps}R_{sklm}+2T_{ijp}R_{ijlm}-2T_{skp}R_{sklm}+4\theta_rR_{rplm}=-T_{ijs}\ph_{ijkp}R_{sklm}+4\theta_rR_{rplm}.
\end{multline*}
The last identity can be written as
\begin{equation}\label{m2}
T_{ijs}\ph_{ijkp}R_{sklm}=2\theta_rR_{rplm}.
\end{equation}
Substitute \eqref{m2} into \eqref{bi23} to get \eqref{bi24} which prooves the lemma.
\end{proof}
Let b) hods.  We multiply \eqref{bi24} with $T_{plm}$, using \eqref{1bi1}  the conditions $dT=2\sigma^T$,  $d^{\sb}T=0$ and the identity $\sigma^T_{ijkl}T_{ijk}=0$ proved in \cite{IS} to calculate 
\begin{multline}\label{bi25}
0=3\Big[\sb_iR_{iplm}-\theta_iR_{iplm}\Big]T_{plm}=\sb_i\Big[-\sigma^T_{plmi}+\sb_iT_{plm}\Big]T_{plm}
- \theta_i\Big[-\sigma^T_{plmi}+\sb_iT_{plm}\Big]T_{plm}\\
=-\sb_i\sigma^T_{plmi}T_{plm}+T_{plm}\sb_i\sb_iT_{plm} - \frac12\sb_{\theta}||T||^2=\sigma^T_{plmi}\sb_iT_{plm}+T_{plm}\sb_i\sb_iT_{plm} - \frac12\sb_{\theta}||T||^2\\=
\frac14\sigma^T_{plmi}d^{\sb}T_{iplm}+T_{plm}\sb_i\sb_iT_{plm} - \frac12\sb_{\theta}||T||^2=T_{plm}\sb_i\sb_iT_{plm} - \frac12\sb_{\theta}||T||^2.
\end{multline}
On the other hand, we calculate  the Laplacian  $-\Delta||T||^2=\LC_i\LC_i||T||^2=\sb_i\sb_i||T||^2$
\begin{equation}\label{lapt}
-\frac12\Delta||T||^2=T_{plm}\sb_i\sb_iT_{plm}+||\sb T||^2.
\end{equation}
A substitution of \eqref{lapt} into \eqref{bi25} yields
\begin{equation}\label{lap}
\Delta||T||^2 + \sb_{\theta}||T||^2=-2||\sb T||^2\le 0.
\end{equation}
Since $M$ is compact  we may apply  the strong maximum principle to \eqref{lap} (see e.g. \cite{YB,GFS}) to achieve  $\sb T=0$ which completes the proof of the theorem.
\end{proof}
As a consequence of the proof of Theorem~\ref{maing2}, we obtain from \eqref{lap}
\begin{cor}\label{main3}
Let $(M,\p)$ be  an integrable $G_2$ manifold. The next two conditions are equivalent:
\begin{itemize}
\item[a)] The torsion 3-form is parallel with respect to the characteristic connection, $\sb T=0$.
\item[b)] The  curvature of the characteristic  connection $\sb$   is a $G_2$-instanton, $d^{\sb}T=0$ and the norm of the torsion is constant, $d||T||^2=0$. 
\end{itemize}
\end{cor}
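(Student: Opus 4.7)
The plan is to show that the corollary is essentially the same argument as Theorem~\ref{maing2}, but with the maximum-principle step replaced by direct inspection, which removes the need for compactness.

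First, I would dispatch the easy direction (a) $\Rightarrow$ (b). If $\sb T=0$, then the alternation $d^{\sb}T=4\,Alt(\sb T)$ vanishes trivially, and $d\|T\|^2=0$ is immediate. For the $G_2$-instanton condition, I would invoke Lemma~\eqref{4form}: $\sb T=0$ (a fortiori $\sb T\in\Lambda^4$) forces $R\in S^2\Lambda^2$, and then the fact that $\sb\p=0$, so the holonomy of $\sb$ lies in $\mathfrak{g}_2$, makes each 2-form slot of $R$ land in $\mathfrak{g}_2\cong\Lambda^2_{14}$, which is exactly \eqref{rri}.

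The substance is the reverse direction (b) $\Rightarrow$ (a). Here I would point out that the derivation of \eqref{lap} inside the proof of Theorem~\ref{maing2}, namely
\begin{equation*}
\Delta\|T\|^2-\sb_{\theta}\|T\|^2 = -2\|\sb T\|^2,
\end{equation*}
used only the $G_2$-instanton identity \eqref{rri}, the hypothesis $d^{\sb}T=0$ (equivalently $dT=2\sigma^T$), the contracted second Bianchi identity \eqref{bi24}, and the algebraic identity $\sigma^T_{ijkl}T_{ijk}=0$ from \cite{IS}. Compactness entered the original argument only in the last step, where the strong maximum principle was applied. So assumption (b) lets me write \eqref{lap} verbatim.

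Now with $d\|T\|^2=0$ the function $\|T\|^2$ is constant, so $\Delta\|T\|^2=0$ and $\sb_{\theta}\|T\|^2=0$. The left-hand side of \eqref{lap} therefore vanishes identically on $M$, which forces $\|\sb T\|^2\equiv 0$, i.e.\ $\sb T=0$. Thus (b) implies (a), completing the equivalence. The main (and only nontrivial) obstacle is making sure no hidden use of compactness slipped into the chain leading to \eqref{lap}; once that is verified, the corollary is a one-line consequence of constancy of $\|T\|^2$.
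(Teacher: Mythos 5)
Your proposal is correct and follows essentially the same route as the paper, which likewise obtains Corollary~\ref{main3} by observing that the derivation of \eqref{lap} in the proof of Theorem~\ref{maing2} is purely local (compactness enters only through the maximum principle) and that constancy of $||T||^2$ makes the left-hand side of \eqref{lap} vanish, forcing $\sb T=0$. The easy direction is also handled exactly as in the paper, via \eqref{4form} and $\sb\p=0$.
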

For completeness, we give the proof of the next observation from \cite{IS1}.
\begin{lemma}\cite[Lemma~5.1]{IS1}\label{inst1}
Let $(M,\p)$ be  an integrable $G_2$ manifold and the  curvature of the characteristic  connection $\sb$   is a $G_2$-instanton. Then  $\delta T\in\Lambda^2_{14}\cong  \frak{g}_2$.
\end{lemma}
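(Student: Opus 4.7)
The plan is to show that the $\Lambda^2_7$-component of $\delta T$ vanishes, i.e.\ $(\delta T)_{ab}\p_{abc}=0$ for every~$c$. By Proposition~\ref{pp1} combined with \eqref{ttg2} and the integrability fact $d\theta\in\Lambda^2_{14}$ stated earlier, one rewrites
\begin{equation*}
\delta T=d\theta-\theta\lrcorner T-d\lambda\lrcorner\p,\qquad d\theta\in\Lambda^2_{14}.
\end{equation*}
Contracting with $\p_{abc}$ annihilates the $d\theta$ term, and the identity $\p_{kab}\p_{abc}=6\delta_{kc}$ from \eqref{iden} gives $(d\lambda\lrcorner\p)_{ab}\p_{abc}=6\,d\lambda_c$. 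The claim therefore reduces to the identity $(\theta\lrcorner T)_{ab}\p_{abc}=-6\,d\lambda_c$.

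The instanton condition enters through the first Bianchi identity \eqref{1bi}. Contracting its cyclic left-hand side with $\p_{xyz}$ in the first three arguments, the cyclic symmetry of $\p$ collapses the sum to $3\p_{xyz}R_{xyzv}$, and the $G_2$-instanton condition $R_{ij,ab}\p_{abk}=0$ applied to the first pair of $R$ forces this to vanish. Evaluating the right-hand side with the aid of \eqref{dtt} (which gives $dT_{iabc}\p_{abc}=-12\,d\lambda_i$), \eqref{tit} (which gives $\sb_vT_{abc}\p_{abc}=6\,d\lambda_v$), and the second identity of \eqref{lmt0} ($\sigma^T_{iabc}\p_{abc}=3\theta_s\p_{skt}T_{kti}$), together with the sign $\sigma^T_{xyzv}=-\sigma^T_{vxyz}$, yields the key relation
\begin{equation*}
\theta_s\p_{skt}T_{ktv}=-6\,d\lambda_v.
\end{equation*}

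To convert this into the reduced claim, I would apply the first identity of \eqref{lmt0} in the form $T_{abk}\p_{abc}=T_{abc}\p_{abk}-2\theta_s\p_{skc}$, multiply by $\theta_k$, and drop $\theta_k\theta_s\p_{skc}$ which vanishes because $\theta_k\theta_s$ is symmetric in $k,s$ while $\p_{skc}$ is antisymmetric. This gives $(\theta\lrcorner T)_{ab}\p_{abc}=\theta_kT_{abc}\p_{abk}$, and a relabelling of the summation indices combined with the cyclic symmetry $\p_{kts}=\p_{skt}$ identifies the right-hand side with $\theta_s\p_{skt}T_{ktc}=-6\,d\lambda_c$. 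Assembling the three projections,
\begin{equation*}
(\delta T)_{ab}\p_{abc}=0-(-6\,d\lambda_c)-6\,d\lambda_c=0,
\end{equation*}
so $\delta T\in\Lambda^2_{14}\cong\frak{g}_2$, as required.

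The main obstacle is the middle step: the instanton condition on the first pair of indices of $R$ must be invoked to annihilate the cyclic left-hand side of the first Bianchi identity, while on the right side one must combine \eqref{dtt}, \eqref{tit}, and \eqref{lmt0} with the correct signs and cyclic permutations of $T$ and $\p$ to extract the key relation for $\theta_s\p_{skt}T_{ktv}$.
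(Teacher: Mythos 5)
Your proposal is correct and follows essentially the same route as the paper: both reduce the claim to the key identity $\theta_s\p_{skt}T_{ktv}=-6\,d\lambda_v$ (the paper's \eqref{rri5}) by contracting a Bianchi-type curvature identity with $\p$ under the instanton condition, invoking \eqref{dtt} and \eqref{lmt0}, and then substitute into \eqref{deltaT} via \eqref{ttg2} and $d\theta\in\Lambda^2_{14}$. The only (immaterial) difference is that you contract the first Bianchi identity \eqref{1bi} where the paper contracts \eqref{gen}, and you spell out via the first identity of \eqref{lmt0} the index exchange $\theta_s\p_{sab}T_{abi}=\theta_sT_{sab}\p_{abi}$ that the paper asserts in one step.
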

\begin{proof}
Suppose the curvature $R$ of $\sb$ is a $G_2$-instanton. 
Multiply \eqref{gen} with  $\p$ and  apply  \eqref{rri} to get
\begin{equation}\label{rri2}
0=\Big[3R_{abci}-3R_{iabc}\Big]\p_{abc}=\Big[\frac32dT_{abci}-\sigma^T_{abci}\Big]\p_{abc}.
\end{equation}
We obtain from \eqref{rri2} and   \eqref{dtt} that
\begin{equation}\label{rri3}
12d\lambda_i=2\sb_iT_{abc}\p_{abc}=dT_{abci}\p_{abc}=\frac23\sigma^T_{abci}\p_{abc}.
\end{equation} 
Applying \eqref{lmt0} to \eqref{rri3}, we obtain
\begin{equation}\label{rri5}
\begin{split}
\sb_iT_{abc}\p_{abc}=6d\lambda_i=\frac13\sigma^T_{abci}\p_{abc}=-\theta_s\p_{sab}T_{abi}=-\theta_sT_{sab}\p_{abi}=d^{\sb}\theta_{ab}\p_{abi},
\end{split}
\end{equation}
where we used $d\theta\in\Lambda^2_{14}\cong\frak{g}_2$ and \eqref{ttg2} to achieve the last equality in \eqref{rri5}.

Substitute \eqref{rri5} into  \eqref{deltaT} to get
$
\delta T_{ab}\p_{abi}=0 \Leftrightarrow \delta T\in\Lambda^2_{14}\cong\frak{g}_2.
$
\end{proof}
\begin{lemma}\label{ggg}
Let on  an integrable $G_2$ manifold with $d^{\sb}\theta=0$  the characteristic  curvature   is a $G_2$-instanton. Then $\delta T=0$, the manifold is of constant type and the characteristic Ricci tensor is symmetric.
\end{lemma}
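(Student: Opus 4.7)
The plan is to combine Proposition~\ref{pp1} and the intermediate computation from the proof of Lemma~\ref{inst1} to extract all three conclusions almost immediately; no new machinery is needed.

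First I would recall the formula from Proposition~\ref{pp1},
\[
\delta T = d^{\sb}\theta - d\lambda \lrcorner \p,
\]
which on any integrable $G_2$ manifold relates $\delta T$, the skew-covariant derivative of the Lee form, and the differential of the scalar torsion function $\lambda=\tfrac16(d\p,\ph)$. The instanton hypothesis will be fed in via the identity \eqref{rri5} established in the proof of Lemma~\ref{inst1},
\[
\sb_i T_{abc}\,\p_{abc} = 6\,d\lambda_i = d^{\sb}\theta_{ab}\,\p_{abi},
\]
which, together with the fact that the map $\alpha\mapsto \alpha\lrcorner\p$ sends $\Lambda^2_{14}$ to $0$ and is injective on $\Lambda^2_7$, ties $d\lambda$ directly to the $\Lambda^2_7$-component of $d^{\sb}\theta$.

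Next, using the hypothesis $d^{\sb}\theta=0$, \eqref{rri5} immediately yields $d\lambda=0$. Hence $\lambda$ is constant, and by Definition~\ref{ctype} the $G_2$ structure is of constant type. Substituting $d^{\sb}\theta=0$ and $d\lambda=0$ into the formula of Proposition~\ref{pp1} gives $\delta T=0$. Finally, the skew part identity in \eqref{rics},
\[
Ric(X,Y) - Ric(Y,X) = -(\delta T)(X,Y),
\]
together with $\delta T=0$, forces the characteristic Ricci tensor to be symmetric.

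There is no real obstacle here: the only subtlety is noting that $d^{\sb}\theta=0$ is strong enough to kill both pieces appearing on the right-hand side of $\delta T = d^{\sb}\theta - d\lambda\lrcorner\p$, and the link between them (that $d\lambda$ is essentially the $\Lambda^2_7$-projection of $d^{\sb}\theta$ contracted with $\p$) is exactly what \eqref{rri5} provides under the instanton hypothesis. Thus the lemma reduces to a short three-line deduction from Proposition~\ref{pp1}, Lemma~\ref{inst1}, and \eqref{rics}.
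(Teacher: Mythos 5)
Your proof is correct and follows essentially the same route as the paper: both rest on Proposition~\ref{pp1} together with the instanton identities established in (the proof of) Lemma~\ref{inst1}, and both finish with the skew-symmetry formula in \eqref{rics}. The only cosmetic difference is that you extract $d\lambda=0$ directly from the intermediate identity \eqref{rri5}, whereas the paper observes that $\delta T=-d\lambda\lrcorner\p\in\Lambda^2_7$ while Lemma~\ref{inst1} forces $\delta T\in\Lambda^2_{14}$, so both must vanish since $\Lambda^2_7\cap\Lambda^2_{14}=0$.
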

\begin{proof}
The condition $d^{\sb}\theta=0$ and \eqref{deltaT} imply
$
\delta T=-d\lambda\lrcorner\p \in\Lambda^2_7.
$ 
Hence $\delta T=d\lambda=0$ by Lemma~\ref{inst1}.
\end{proof}
\subsection{Proof of Theorem~\ref{instm}}
\begin{proof}
Suppose $\sb T=0$. Then $d^{\sb}T=\delta T=0$ and \eqref{dh} implies \eqref{inst6}. Therefore  $\sb dT=2\sb\sigma^T=0$ the Ricci tensor of the torsion connection is symmetric, because of \eqref{rics},  $\sb$-parallel with constant scalar curvature. Moreover, \eqref{4form} shows that the characteristic curvature $R\in S^2\Lambda^2$ and therefore $R$ is a $G_2$ instanton since $\sb\p=0$.

To prove the converse, we begin with the following
\begin{prop}\label{instm1}
Let $(M,\p)$ be  an integrable $G_2$ manifold with $\sb$-parallel Lee form and the  curvature of the characteristic  connection $\sb$   is a $G_2$-instanton. Then $d^{\sb}T=0$.
 \end{prop}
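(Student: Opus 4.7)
Since $\sb\theta=0$ immediately yields $d^{\sb}\theta=0$, Lemma~\ref{ggg} applies under the $G_2$-instanton hypothesis and produces $\delta T=0$, $d\lambda=0$ (the manifold is of constant type) and a symmetric characteristic Ricci. Feeding $d\lambda=0$ back into the chain of equalities \eqref{rri5} established in the proof of Lemma~\ref{inst1} yields the additional pointwise identity
\[
\sb_iT_{abc}\p_{abc}=0,
\]
so the $\Lambda^3_1$-part of the 3-form $(a,b,c)\mapsto(\sb_{e_i}T)(e_a,e_b,e_c)$ is identically zero for every $i$.

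The strategy is then to apply Proposition~\ref{4-for} to the 4-form $A:=d^{\sb}T$: it suffices to show, for every tangent vector $X$, that $X\lrcorner d^{\sb}T\in\Lambda^3_{27}$, i.e.\ that its $\Lambda^3_1$- and $\Lambda^3_7$-projections both vanish. The $\Lambda^3_1$-projection is computed by contracting $d^{\sb}T_{iabc}$ with $\p_{abc}$; expanding the four terms of \eqref{dh} and using the full antisymmetry of $T$ together with the cyclic structure of $\p$, the whole expression collapses, up to a combinatorial constant, to $\sb_iT_{abc}\p_{abc}$, which has just been shown to vanish.

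The essential step, and the main obstacle, is the $\Lambda^3_7$-projection. Here I would combine the second Bianchi identity \eqref{bi24}, $\sb_iR_{iplm}=\theta_rR_{rplm}$, with the $G_2$-instanton identity $R_{abij}\ph_{abkl}=-2R_{klij}$ from \eqref{rri} and the first Bianchi identity \eqref{1bi1}, which rewrites the cyclic sum of $R$ in the form $-\frac12 dT+(\sb T)$. Contracting the resulting equalities with suitable combinations of $\p$ and $\ph$ and invoking the algebraic identities \eqref{iden} together with $d\theta=\theta\lrcorner T\in\Lambda^2_{14}$ (which follows from \eqref{ttg2} and $\sb\theta=0$), I expect the $\Lambda^3_7$-component of $X\lrcorner d^{\sb}T$ to reduce to a linear combination of $\sb\theta$, $d\lambda$ and $\delta T$, all of which vanish under the hypotheses. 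The delicate point is precisely the choice of contractions: one has to pick the right pattern so that after repeated applications of \eqref{iden} all curvature and $\sigma^T$ terms cancel and only ``$\theta$-type'' residues survive, which are then killed by $\sb\theta=0$ and by the consequences of Lemma~\ref{ggg}.
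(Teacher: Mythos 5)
Your overall strategy is exactly the paper's: use Lemma~\ref{ggg} (applicable since $\sb\theta=0$ forces $d^{\sb}\theta=0$) to get $\delta T=d\lambda=0$, then show that both the $\Lambda^3_1$- and $\Lambda^3_7$-projections of $X\lrcorner d^{\sb}T$ vanish and invoke Proposition~\ref{4-for}. The $\Lambda^3_1$ part is essentially fine, though your claim that $d^{\sb}T_{iabc}\p_{abc}$ ``collapses to $\sb_iT_{abc}\p_{abc}$'' is not quite accurate: expanding \eqref{dh} gives $d^{\sb}T_{iabc}\p_{abc}=\sb_iT_{abc}\p_{abc}-3\sb_aT_{bci}\p_{abc}$, and the second contraction is not reducible to the first by symmetry alone; it is killed by the third identity of \eqref{dtt} combined with $\sigma^T_{iabc}\p_{abc}=0$ (which follows from \eqref{rri3} once $d\lambda=0$). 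The cleaner route, which the paper takes, is simply $d^{\sb}T_{iabc}\p_{abc}=dT_{iabc}\p_{abc}-2\sigma^T_{iabc}\p_{abc}=0$.

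The genuine gap is the $\Lambda^3_7$-projection, which you yourself flag as ``the essential step'' and leave as an expectation rather than a computation. What is actually needed is the pointwise identity $\sb_aT_{bci}\ph_{abcj}=-2\sb_i\theta_j-4\sb_j\theta_i$ (equation \eqref{g22}), from which $\sb\theta=0$ gives $\sb_aT_{bci}\ph_{abcj}=0$ and hence, together with $\sb_iT_{abc}\ph_{jabc}=-6\sb_i\theta_j=0$ from \eqref{tit}, the vanishing $d^{\sb}T_{iabc}\ph_{abcj}=0$. The paper obtains \eqref{g22} by contracting the \emph{algebraic} curvature identity \eqref{gen} with $\ph$ and using the instanton condition to recognise the left side as $6(Ric_{ij}+Ric_{ji})$, then combining this with the Ricci formula \eqref{ricdt} and the antisymmetry relation $Ric_{ij}-Ric_{ji}=-\delta T_{ij}=-d^{\sb}\theta_{ij}$ (from \eqref{rics} and \eqref{deltaT} with $d\lambda=0$). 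Your proposed lead tool, the contracted second Bianchi identity \eqref{bi24}, is the wrong instrument here: it involves $\sb R$, i.e.\ second covariant derivatives of $T$, and cannot by itself produce the required first-order identity for $\sb_aT_{bci}\ph_{abcj}$; in the paper \eqref{bi24} enters only later, in Proposition~\ref{instm2} and in the maximum-principle argument of Theorem~\ref{maing2}. Since the choice of contractions is precisely the content of the proof and is left unresolved, the argument as written does not establish the proposition.
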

 \begin{proof}
 Since $d\lambda=0$ due to Lemma~\ref{ggg}, it follows from \eqref{dtt} and \eqref{rri2} 
 $0=dT_{iabc}\p_{abc},\quad \sigma^T_{iabc}\p_{abc}=0$  and   \eqref{dh} yields
 \begin{equation}\label{inst3}
 0=dT_{iabc}\p_{abc}=d^{\sb}T_{iabc}\p_{abc}+2\sigma^T_{iabc}\p_{abc}=d^{\sb}T_{iabc}\p_{abc}.
 \end{equation}
  Further we use the $G_2$-instanton condition \eqref{rri}. 
Multiply \eqref{gen} with $\ph$ and use \eqref{rri} to get
\begin{equation}\label{rri21}
\Big[3R_{abci}-3R_{iabc}\Big]\ph_{abcj}=-6R_{cjci}+6R_{iaaj}=6Ric_{ji}+6Ric_{ij}=\Big[\frac32dT_{abci}-\sigma^T_{abci}\Big]\ph_{abcj}.
\end{equation}
We obtain from \eqref{rri21}, \eqref{ricdt} using \eqref{dh}, \eqref{deltaT} and $d\lambda=0$ that
\begin{equation*}
\begin{split}
-\delta T_{ij}=-d^{\sb}\theta_{ij}=Ric_{ij}-Ric_{ji}=
\frac16\Big[-\frac12dT_{abci}-2\sb_iT_{abc}+\sigma^T_{abci} \Big]\ph_{abcj}\\=
-\frac14\Big[\sb_aT_{bci}+\sb_iT_{abc} \Big]\ph_{abcj}=-\frac14\sb_aT_{bci}\ph_{abcj}-\frac32\sb_i\theta_j ,
\end{split}
\end{equation*}
which implies
\begin{equation}\label{g22}
\begin{split}
\sb_aT_{bci}\ph_{abcj}=-6\sb_i\theta_j+4\sb_i\theta_j-4\sb_j\theta_i=-2\sb_i\theta_j-
4\sb_j\theta_i . 
\end{split}
\end{equation}
 Now, \eqref{g22} and $\sb\theta=0$ yield 
 \begin{equation}\label{inst4}
 \sb_aT_{bci}\ph_{abcj}=0.
 \end{equation}
 Substitute \eqref{inst4} into \eqref{dh} and use again  $\sb\theta=0$ to get
 \begin{equation}\label{inst5}
 d^{\sb}T_{iabc}\ph_{abcj}=0.
 \end{equation}
 Hence, \eqref{inst3} and \eqref{inst5} imply that  for any $X\in T_pM$ the 3-form  $X\lrcorner d^{\sb}T\in \Lambda^3_{27}$ and Proposition~\ref{4-for} yields  $ d^{\sb}T=0$ and $dT=2\sigma^T$ by   \eqref{dh}. 
 \end{proof}
 To handle the non-compact case, we observe
 \begin{prop}\label{instm2}
Let $(M,\p)$ be  an integrable $G_2$ manifold with $\sb$-parallel Lee form and the  curvature of the characteristic  connection $\sb$   is a $G_2$-instanton. Then the norm of the torsion is a constant, $d||T||^2=0$.\
 \end{prop}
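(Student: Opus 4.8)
The plan is to reduce the assertion to the vanishing of $\sb_j\|T\|^2$ and to pin this down by computing the divergence $\sb_iRic_{ij}$ of the characteristic Ricci tensor in two independent ways. Throughout I would use the facts already secured under the hypotheses: by Lemma~\ref{ggg} the structure is of constant type ($d\lambda=0$) and $\delta T=0$; by Proposition~\ref{instm1} one has $d^{\sb}T=0$ and $dT=2\sigma^T$; and $Ric$ is symmetric. Since $\sb\theta=0$ forces $\delta\theta=0$ and makes $\|\theta\|^2$ and $\lambda$ constant, the scalar curvature formula \eqref{ricg2} collapses to $Scal=2\|\theta\|^2-\tfrac13\|T\|^2+2\lambda^2$, so that $d(Scal)=-\tfrac13\,d\|T\|^2$; this is what will feed the Levi-Civita comparison below.

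First I would extract a divergence identity from the instanton condition. Contracting \eqref{bi24} with $\ph_{plmj}$ and using $R_{iplm}\ph_{plmj}=-2Ric_{ij}$ (which is the relation $2Ric_{ij}=R_{iabc}\ph_{jabc}$ of \eqref{ricdt}) together with $\sb\ph=0$, I expect
\begin{equation*}
\sb_iRic_{ij}=\theta_rRic_{rj}.
\end{equation*}

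Next I would obtain a second expression for the same divergence by comparison with $\LC$. The contracted second Bianchi identity for the Levi-Civita connection reads $\LC_iRic^g_{ij}=\tfrac12\sb_jScal^g$. Writing $Ric^g_{ij}=Ric_{ij}+\tfrac14 T_{iab}T_{jab}$ and $Scal^g=Scal+\tfrac14\|T\|^2$ from \eqref{rics} (here $\delta T=0$), converting $\LC$ to $\sb$ (the torsion corrections drop because $Ric$ and $T_{i\cdot\cdot}T_{j\cdot\cdot}$ are symmetric while $\sb_iT_{iab}=-\delta T_{ab}=0$ by \eqref{inst6}), and using the algebraic identity $T_{iab}\sb_iT_{jab}=\tfrac13\,T_{abc}\sb_jT_{abc}$ — which I would derive by contracting $d^{\sb}T=0$, expanded via \eqref{dh}, with $T_{abc}$ — I expect to reach
\begin{equation*}
\sb_iRic_{ij}=-\tfrac16\,T_{abc}\sb_jT_{abc}=-\tfrac1{12}\,\sb_j\|T\|^2.
\end{equation*}

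The main obstacle, and the decisive step, is to show that the right-hand side of the first identity vanishes, i.e. that $Ric(\theta,\cdot)=0$. Here I would exploit that $\theta^\sharp$ is $\sb$-parallel for a metric connection with skew torsion, hence Killing for $g$; the Bochner identity for a Killing field then gives $Ric^g(\theta)_i=\tfrac12(\delta^g d\theta)_i$. Using $\sb\theta=0$ (so $d\theta=\theta\lrcorner T$ by \eqref{ttg2} and $\LC\theta=\tfrac12 d\theta$) together with $\delta T=0$ (so $\sb_k d\theta_{ki}=0$), a short computation converting $\delta^g$ to $\sb$ should give $\delta^g d\theta_i=\tfrac12 T_{iab}d\theta_{ab}$, while \eqref{rics} gives $Ric^g(\theta)_i=Ric(\theta,e_i)+\tfrac14 T_{iab}d\theta_{ab}$; the two quadratic torsion terms then cancel exactly, leaving $Ric(\theta,\cdot)=0$. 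Combining this with the first identity yields $\sb_iRic_{ij}=0$, and the second identity forces $\sb_j\|T\|^2=0$, i.e. $d\|T\|^2=0$. I expect the delicate points to be the two sign-sensitive cancellations — the torsion corrections in the Levi-Civita comparison and the matching of the two $T_{iab}d\theta_{ab}$ terms in the Bochner step — which are precisely where $\delta T=0$, $d^{\sb}T=0$ and $\sb\theta=0$ enter.
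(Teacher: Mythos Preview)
Your proposal is correct, and the overall architecture---compute $\sb_iRic_{ij}$ in two ways and compare---is the same as the paper's, but the second computation is genuinely different. Both you and the paper obtain $\sb_iRic_{ij}=-\tfrac{1}{12}\sb_j\|T\|^2$ from the contracted second Bianchi identity (the paper quotes it as \eqref{e1} from \cite{IS}; you re-derive it from the Levi--Civita Bianchi via \eqref{rics}). The divergence identity \eqref{bi24} that you contract with $\ph$ is also in the paper, but only used elsewhere.

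Where you diverge is in the second expression. The paper differentiates the explicit Ricci formula $Ric_{ij}=-\tfrac12 T_{abs}T_{sci}\ph_{jabc}$ (obtained from \eqref{ricg2} with $\sb\theta=0$ and $dT=2\sigma^T$) and, with the auxiliary identity $\sb_iT_{jkl}\ph_{ijab}=-(\sb_aT_{bkl}-\sb_bT_{akl})$ extracted from $R_{ijkl}-R_{klij}=\sb_iT_{jkl}-\sb_jT_{ikl}$ and the instanton condition, gets $\sb_iRic_{ij}=-\tfrac16\sb_j\|T\|^2$; equating the two values forces $d\|T\|^2=0$. You instead contract \eqref{bi24} with $\ph$ to obtain $\sb_iRic_{ij}=Ric(\theta,e_j)$ and then kill the right-hand side by a Bochner argument for the Killing field~$\theta^\sharp$: $Ric^g(\theta)_i=\tfrac12(\delta^g d\theta)_i=\tfrac14 T_{iab}(d\theta)_{ab}$ exactly cancels the torsion correction $Ric^g(\theta)_i-Ric(\theta,e_i)=\tfrac14 T_{iab}(d\theta)_{ab}$ from \eqref{rics}. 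This is a neat and more conceptual substitute for the paper's direct manipulation; it avoids the identity \eqref{ins3} entirely, at the cost of invoking the Killing/Bochner machinery. The paper's route is more self-contained within the torsion calculus and yields the stronger intermediate value $-\tfrac16\sb_j\|T\|^2$, but both arguments use exactly the same hypotheses ($\sb\theta=0$, $d^{\sb}T=0$, $\delta T=0$, instanton) and arrive at the same conclusion.
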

 \begin{proof}
 It is known due to \cite[(3.38)]{I} that 
\begin{multline}\label{ins1}
2R(X,Y,Z,V)-2R(Z,V,X,Y)\\=(\sb_XT)(Y,Z,V)-(\sb_YT)(X,Z,V)-(\sb_ZT)(X,Y,V)+(\sb_VT)(X,Y,Z).
\end{multline}
 Proposition~\ref{instm1} tells us that \eqref{inst6} holds true.   Using $d^{\sb}T=0$, we obtain from \eqref{ins1} that
 \begin{equation}\label{ins2}
 R_{ijkl}-R_{klij}=\sb_iT_{jkl}-\sb_jT_{ikl}=-\sb_kT_{lij}+\sb_lT_{kij}. 
 \end{equation}
 Multiply \eqref{ins2}with $\ph_{ijab}$, use the instanton condition, \eqref{inst6} and \eqref{ins1} to get
 \begin{multline}\label{ins3}
 2\sb_iT_{jkl}\ph_{ijab}=\Big[-\sb_kT_{lij}+\sb_lT_{kij}\Big]\ph_{ijab}\\=-2R_{abkl}+2R_{klab}=2\Big[\sb_kT_{lab}-\sb_lT_{kab}\Big]=-2\Big[\sb_aT_{bkl}-\sb_bT_{akl}\Big].
 \end{multline}
 We will use the contracted second Bianchi identity for a metric connection with totally skew-symmetric torsion proved in  \cite[Proposition~3.5]{IS}
\begin{equation}\label{e1}
d(Scal)_j-2\sb_iRic_{ji}+\frac16d||T||^2_j+\delta T_{ab}T_{abj}+\frac16T_{abc}dT_{jabc}=0.
\end{equation}
We obtain from \eqref{inst6} that
\begin{equation}\label{inst8}
\begin{split}
0=d^{\sb}T_{absi}T_{abs}=3\sb_aT_{bsi}T_{abs}-\sb_iT_{abs}T_{abs}=3\sb_aT_{bsi}T_{abs}-\frac12\sb_i||T||^2;\\
0=d^{\sb}T_{absi}\sb_iT_{abs}=3\sb_aT_{bsi}\sb_iT_{abs}-\sb_iT_{abs}\sb_iT_{abs}=3\sb_aT_{bsi}\sb_iT_{abs}-||\sb T||^2.
\end{split}
\end{equation}
Further, we get  from \eqref{ricg2} applying \eqref{inst6} and the condition $\sb\theta=0$ that
 \begin{equation}\label{ins4}
 Ric_{ij}=\frac1{12}dT_{iabc}\ph_{jabc}=\frac16\sigma^T_{iabc}\ph_{jabc}=-\frac12T_{abs}T_{sci}\ph_{jabc}.
 \end{equation}
 We calculate from \eqref{ins4} using \eqref{ins3}, \eqref{inst4} and \eqref{inst8} that
 \begin{multline}\label{inst7}
 -2\sb_jRic_{ij}=\sb_jT_{abs}\ph_{jabc}T_{sci}+T_{abs}\sb_jT_{cis}\ph_{jcab}=-T_{abs}\Big[\sb_aT_{bis}-\sb_bT_{ais}\Big]=\frac13\sb_i||T||^2.
 \end{multline}
 We obtain from \eqref{ricg2} using $\sb\theta=d\lambda=0$ 
 \begin{equation}\label{inst9}
 d(Scal)_j=-\frac13\sb_j||T||^2.
 \end{equation}
 Substitute \eqref{inst7}and \eqref{inst9} into \eqref{e1} to get
 $$d||T||^2=0,$$
 where we used  $\delta T=0$ and the identity $dT_{jabc}T_{abc}=2\sigma^T_{jabc}T_{abc}=0$ proved in  \cite[Proposition~3.1]{IS}.
 \end{proof}
  Combine Corollary~\ref{main3} with Proposition~\ref{instm1} and Proposition~\ref{instm2}  to complete the proof of Theorem~\ref{instm}.
 \end{proof}

Since on a co-calibrated $G_2$ manifold the Lee form $\theta=0$, we obtain from Proposition~\ref{instm1} and Proposition~\ref{instm2}  that
 \begin{cor}
 Let $(M,\p)$ be  a co-calibrated $G_2$ manifold  and the  curvature of the characteristic  connection $\sb$   is a $G_2$-instanton. Then \eqref{inst6} holds true.
 \end{cor}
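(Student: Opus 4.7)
The plan is to reduce the corollary to an immediate application of Proposition~\ref{instm1}. The key observation is that on a co-calibrated $G_2$ manifold one has $d*\p=0$ by definition. Using formula \eqref{g2li} for the Lee form, one finds $\theta = \frac13 *(*d*\p \wedge *\p) = 0$ identically. This is in line with the decomposition into Fernández--Gray classes: co-calibrated means $\tau \in W_1 \oplus W_{27}$, and the $W_7$ component — which carries precisely the Lee form — is absent.

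Once $\theta \equiv 0$ is established, the condition $\sb\theta = 0$ is trivially satisfied, so all hypotheses of Proposition~\ref{instm1} are met: the $G_2$ structure is integrable (co-calibrated implies $d*\p = \theta\wedge *\p$ vacuously), the Lee form is $\sb$-parallel, and the characteristic curvature is assumed to be a $G_2$-instanton. Applying that proposition directly yields $d^{\sb}T = 0$.

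Finally, the equivalence \eqref{inst6} translates $d^{\sb}T = 0$ into $dT = 2\sigma^T$, which is the content of \eqref{inst6}, completing the proof.

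As for the main obstacle, there is really none of substance: all the technical work — the identity \eqref{bi24} from the second Bianchi identity, the manipulation of \eqref{gen} and \eqref{ricdt} to extract $\sb_a T_{bci}\ph_{abcj} = 0$, and the appeal to Proposition~\ref{4-for} to conclude that the 4-form $d^{\sb}T$ vanishes — is already encapsulated in Proposition~\ref{instm1}. The only thing that has to be verified for the corollary is the elementary fact that co-calibrated forces the vanishing of $\theta$, after which the proof is a one-line deduction.
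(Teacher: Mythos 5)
Your proposal is correct and is exactly the paper's argument: the paper derives this corollary in one line by noting that co-calibrated ($d*\p=0$) forces $\theta=0$, hence $\sb\theta=0$ trivially, and then invoking Proposition~\ref{instm1} to get $d^{\sb}T=0$, i.e.\ \eqref{inst6}. Nothing further is needed.
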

 \begin{cor}\label{co-inst}
Let $(M,\p)$ be  a co-calibrated $G_2$ manifold and the  curvature of the characteristic  connection $\sb$   is a $G_2$-instanton. Then the norm of the torsion is a constant, $d||T||^2=0$.
 \end{cor}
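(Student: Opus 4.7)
The plan is to deduce Corollary~\ref{co-inst} directly from Proposition~\ref{instm2}, since a co-calibrated $G_2$ manifold is a special case of an integrable $G_2$ manifold with $\sb$-parallel Lee form. First I would recall that on a co-calibrated $G_2$ manifold, by definition, $d*\p = 0$. This is equivalent to saying that the $G_2$ structure is integrable with vanishing Lee form, i.e.\ $d*\p = \theta \wedge *\p$ with $\theta = 0$, as can be read off from the definition \eqref{g2li}.

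Next I would observe that $\theta = 0$ trivially implies $\sb\theta = 0$. Thus the hypotheses of Proposition~\ref{instm2} are fully satisfied: we have an integrable $G_2$ manifold with $\sb$-parallel Lee form whose characteristic connection has $G_2$-instanton curvature. The conclusion $d\|T\|^2 = 0$ then follows immediately from that proposition.

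There is no main obstacle here; the only thing to verify is the harmless observation that co-calibrated implies integrable with vanishing $\theta$, so the assumption of $\sb$-parallel Lee form in Proposition~\ref{instm2} is automatic. In effect the proof is a single invocation of the previous proposition, written as: since $\theta = 0$, Proposition~\ref{instm2} applies and gives $d\|T\|^2 = 0$.
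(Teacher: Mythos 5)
Your proposal is correct and matches the paper's intent exactly: the corollary is stated immediately after Proposition~\ref{instm2} precisely because co-calibrated means $d*\p=0$, i.e.\ integrable with $\theta=0$, so $\sb\theta=0$ holds trivially and the proposition applies. Nothing further is needed.
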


 \subsection{Proof of Theorem~\ref{instqjm}}
 The statements in Theorem~\ref{instqjm} follow from \cite[Theorem~1.1, Lemma~5.1]{IS1} and Theorem~\ref{instm}. Indeed, \cite[Lemma~5.1]{IS1} shows that $\sb\theta=0$ and  Theorem~\ref{instm} yields $\sb T=0$. Now \cite[Theorem~1.1]{IS1} implies $dT=\delta T=\LC T=\sb T=0$. Therefore, $\sigma^T=0$ by \eqref{dh}.

 \subsection{Compact  Gauduchon $G_2$ manifolds. Proof of Theorem~\ref{cmpga}} 
In this subsection, we recall the notion of conformal deformations of a given $G_2$ structure $\p$  from \cite{FG,FI1,Kar1} and proof Theorem~\ref{cmpga}. 

Let $\bar\p=e^{3f}\p$ be a conformal deformation of $\p$. The induced metric $\bar g=e^{2f}g$ and $\bar{*}\bar\p=e^{4f}*\p$, where $\bar{*}$ is the Hodge star operator with respect to $\bar{g}$. The class of integrable $G_2$ structures is invariant under conformal deformations. An easy calculations give $(d\bar\p,\bar{*}\bar\p)=e^{-f}(d\p,*\p)$ which compared with \eqref{lm} yields $\bar\lambda=e^{-f}\lambda$. Hence, the class of strictly integrable $G_2$ manifolds, ($\lambda=0$), is invariant under conformal deformations while the class of constant non-zero type is not conformally invariant.

The Lee forms are connected by $\bar\theta=\theta+4df$. Using the expression of the Gauduchon theorem 
 in terms of a Weyl structure \cite [Appendix 1]{tod},
one can find, in a unique way, a conformal $G_2$ structure such that the corresponding Lee 
1-form is coclosed with respect to the induced metric due to \cite[Theorem~3.1]{FI1}.

Further, we establishe the following
\begin{thrm}\label{RG1}
Let $(M,\p)$ be a compact integrable $G_2$ manifold of constant type with a Gauduchon $G_2$ structure, $\delta\theta=0$.  If the characteristic connection is a $G_2$-instanton then the Lee form is $\sb$-parallel. 

In particular  $\delta T=0$ and the Ricci tensor is symmetric.
\end{thrm}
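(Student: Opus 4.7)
The plan is to prove $\sb\theta = 0$; the remaining conclusions ($\delta T = 0$ and the symmetry of $Ric$) will then follow from Proposition~\ref{pp1} (using the constant-type hypothesis $d\lambda=0$) and \eqref{rics}.

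First, I would invoke Lemma~\ref{inst1}: the $G_2$-instanton hypothesis yields $\delta T \in \Lambda^2_{14}$. Proposition~\ref{pp1} combined with $d\lambda=0$ then gives $d^{\sb}\theta = \delta T$, so $d^{\sb}\theta\in\Lambda^2_{14}$. Since $T$ is totally skew-symmetric, $\delta\theta = -\sb_i\theta^i$, whence the Gauduchon hypothesis becomes $\sb_i\theta^i = 0$. Writing $d\theta = d^{\sb}\theta + \theta\lrcorner T$ by \eqref{ttg2} and using the adjoint identity $\int_M\langle d\theta,\delta T\rangle\,dV = 0$ (from $d^2\theta = 0$), I would obtain the auxiliary integral identity $\int_M\langle\theta\lrcorner T,d^{\sb}\theta\rangle\,dV = -\int_M|d^{\sb}\theta|^2\,dV$.

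The core of the proof consists of computing $\int_M|\sb\theta|^2\,dV$ in two ways. First, applying the classical Bochner formula to $\theta$ (with $\delta\theta=0$), then converting $|\LC\theta|^2$, $Ric^g(\theta,\theta)$, and $|d\theta|^2$ to their $\sb$-counterparts via \eqref{tsym}, \eqref{rics}, and \eqref{ttg2}, and invoking the auxiliary identity, yields
\[
\int_M|\sb\theta|^2\,dV = -\int_M Ric(\theta,\theta)\,dV.
\]
Second, starting from $\int_M|\sb\theta|^2\,dV = -\int_M\theta^j\sb_i\sb_i\theta_j\,dV$ (divergence theorem, $T_{iij}=0$), I would decompose $\sb_i\sb_i\theta_j = \sb_i\sb_j\theta_i + \sb_i d^{\sb}\theta_{ij}$ and apply the Ricci identity for $\sb$ (together with $\sb_j(\sb_i\theta^i)=0$) for the first summand and the identity $\sb_i(\delta T)_{ij} = \tfrac12 T_{ija}(\delta T)^{ia}$ (from $\delta^2 T=0$ converted via \eqref{tsym}) for the second; the auxiliary identity again closes the computation, yielding
\[
\int_M|\sb\theta|^2\,dV + \int_M|d^{\sb}\theta|^2\,dV = -\int_M Ric(\theta,\theta)\,dV.
\]
Comparing forces $d^{\sb}\theta=0$; hence $\delta T=0$ and by \eqref{rics} the Ricci tensor is symmetric.

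Finally, to upgrade to $\sb\theta = 0$, I note that $\sb\theta$ is now symmetric, so \eqref{g22} (valid since $d\lambda=0$) reduces to $\sb_a T_{bci}\ph_{abcj} = -6\sb_i\theta_j$. Combining with $\sb_i T_{abc}\ph_{abcj} = 6\sb_i\theta_j$ (from \eqref{tit} and $\sb\ph=0$) and expanding via the alternating structure of $d^{\sb}T$ yields the key identity $d^{\sb}T_{iabc}\ph_{abcj} = 24\sb_i\theta_j$. Together with \eqref{inst3} ($d^{\sb}T_{iabc}\p_{abc}=0$) and $\sb_i\theta^i=0$, this places $d^{\sb}T$ in the $\Lambda^4_{27}$-isotypic component, with $|d^{\sb}T|^2$ a positive pointwise multiple of $|\sb\theta|^2$. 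The closing step would exploit the second Bianchi identity \eqref{bi24} of the torsion connection and the symmetric-Ricci formula \eqref{rri21} to produce an integral identity forcing $\sb\theta=0$. The hardest part will be carrying out this algebraic-analytic reduction: expanding $\sigma^T\cdot\ph$ contracted with $\theta^{\otimes 2}$ via the canonical $T$-decomposition \eqref{torcy2} and the identities \eqref{iden} to extract a definite sign.
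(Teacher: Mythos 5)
There is a genuine gap, and it sits at the heart of your argument. Your mechanism for obtaining $d^{\sb}\theta=0$ is to compute $\int_M|\sb\theta|^2$ in two ways and compare, but the two computations in fact yield the \emph{same} identity. The discrepancy you find comes from a sign error in the divergence identity for $\delta T$: the correct statement (the paper's \eqref{iii}, and what $\delta^2T=0$ converted via \eqref{tsym} actually gives) is $\sb_i\delta T_{ij}=\tfrac12\,\delta T_{ia}T_{iaj}$, whereas you use $\sb_i\delta T_{ij}=\tfrac12\,T_{ija}\delta T_{ia}=-\tfrac12\,\delta T_{ia}T_{iaj}$. With the correct sign, in your second computation the term $+\tfrac12 d^{\sb}\theta_{ia}T_{iaj}\theta_j$ coming from $\sb_i d^{\sb}\theta_{ij}$ cancels exactly against the torsion term $-\tfrac12 d^{\sb}\theta_{ai}T_{aij}\theta_j$ of the Ricci identity, and one is left with the pointwise identity $\theta_j\sb_i\sb_i\theta_j=Ric(\theta,\theta)$ — i.e.\ precisely the integrated form of your first (Bochner) computation, $\int|\sb\theta|^2=-\int Ric(\theta,\theta)$. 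The comparison is therefore vacuous and does not force $d^{\sb}\theta=0$. Moreover, even granting $d^{\sb}\theta=0$, your route from there to $\sb\theta=0$ is only a sketch: the "closing step" that is supposed to extract a definite sign from $\sigma^T\cdot\ph$ contracted with $\theta\otimes\theta$ and the Bianchi identities is exactly the part you defer, so the proof is not complete.

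For contrast, the paper works pointwise rather than by integration. It combines \eqref{iii} and the Ricci identity into the pointwise formula \eqref{gafinf}, $-\tfrac12\Delta\|\theta\|^2-Ric(\theta,\theta)-\|\sb\theta\|^2=0$ (no $d^{\sb}\theta$ term survives, consistent with the cancellation above), and then the essential extra input is the computation $Ric(\theta,\theta)=-\tfrac32\,\theta_i\sb_i\|\theta\|^2$, which uses \eqref{g22} together with $\sigma^T_{abci}\ph_{abcj}\theta_i\theta_j=0$ (a consequence of $\theta\lrcorner T\in\Lambda^2_{14}$ from Lemma~\ref{inst1} and the structure equation \eqref{torcy2}). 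Substituting gives the elliptic inequality $\Delta\|\theta\|^2+3\theta_i\sb_i\|\theta\|^2=-2\|\sb\theta\|^2\le 0$, and the strong maximum principle on the compact manifold yields $\sb\theta=0$ in one stroke. The evaluation of $Ric(\theta,\theta)$ and the use of the maximum principle are the two ingredients missing from your proposal; without them, neither $d^{\sb}\theta=0$ nor the full conclusion $\sb\theta=0$ is reached.
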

\begin{proof} We start with the next identity 
\begin{equation}\label{iii}
\sb_i\delta T_{ij}=\frac12\delta T_{ia}T_{iaj}.
\end{equation}
 shown in \cite[Proposition~3.2]{IS} for any metric connection with a totally skew-symmetric torsion. 

We calculate the left-hand side  of \eqref{iii} applying \eqref{deltaT} as follows
\begin{equation}\label{ntss}
\begin{split}
\sb_i\delta T_{ij}=\sb_i[d^{\sb}\theta_{ij}-\sb_t\lambda\p_{tij}]=
\sb_i\sb_i\theta_j-\sb_i\sb_j\theta_i-\frac12T_{tis}\sb_s\lambda\p_{tij},
\end{split}
\end{equation}
where we applied $d^2\lambda=0$ and \eqref{tsym} to get the last term.

Substitute \eqref{ntss} into \eqref{iii} using \eqref{deltaT} to get
\begin{equation}\label{ntss1}
\sb_i\sb_i\theta_j-\sb_i\sb_j\theta_i-\frac12T_{abs}\sb_s\lambda\p_{sab}=\frac12d^{\sb}\theta_{ab}T_{abj}-\frac12T_{abj}\sb_s\lambda\p_{sab}.
\end{equation}
The Ricci identity  
\begin{equation}\label{rrii}\sb_i\sb_j\theta_i=\sb_j\sb_i\theta_i-R_{ijis}\theta_s-T_{ija}\sb_a\theta_i=
\sb_j\sb_i\theta_i+Ric_{js}\theta_s-\frac12d^{\sb}\theta_{ai}T_{aij}
\end{equation}
substituted into \eqref{ntss1} yields
\begin{equation}\label{gafin}
\sb_i\sb_i\theta_j+\sb_j\delta\theta-Ric_{js}\theta_s=\frac12\sb_s\lambda\Big(T_{abs}\p_{abj}-T_{abj}\p_{abs}  \Big)=-\sb_s\lambda\theta_a\p_{asj},
\end{equation}
where we use the first identity of \eqref{lmt0} to achieve the last equality. 

Multiply the both sides of \eqref{gafin} with $\theta_j$, use  $\delta\theta=0$ together with the identity 
\begin{equation}\label{llii}\frac12\Delta||\theta|^2=-\frac12\LC_i\LC_i||\theta||^2=-\frac12\sb_i\sb_i||\theta||^2=-\theta_j\sb_i\sb_i\theta_j-||\sb\theta||^2
\end{equation}
to get (see \cite{IS1})
\begin{equation}\label{gafinf}
-\frac12\Delta||\theta||^2-Ric(\theta,\theta)-||\sb\theta||^2=0.
\end{equation}
 Since $d\lambda=0$ we have from \eqref{deltaT} that $\delta T=d^{\sb}\theta$. Consequently, \eqref{g22} holds true. We calculate from \eqref{ricg2} with the help of \eqref{g22}  that
\begin{equation}\label{rri4}
\begin{split}
Ric_{ij}\theta_i\theta_j=\frac1{12}dT_{abci}\ph_{abcj}\theta_i\theta_j-\theta_i\theta_j\sb_i\theta_j\\=
\frac1{12}\Big[2\sigma^T_{abci}\ph_{abcj}+3\sb_aT_{bci}\ph_{abcj}-18\sb_i\theta_j  \Big]\theta_i\theta_j=-\frac32\theta_i\sb_i||\theta||^2,
\end{split}
\end{equation}
where we used $\theta\lrcorner T=d\theta-\delta T\in \Lambda^2_{14}\cong\frak{g}_2$ due to Lemma~\ref{inst1}, to get  $\sigma^T_{abci}\ph_{abcj}\theta_i\theta_j=0$. 

Indeed,  we calculate applying \eqref{torcy2} and $\theta\lrcorner T\in \Lambda^2_{14}\cong\frak{g}_2$
\begin{multline*}
\frac13\sigma^T_{jsmp}\ph_{jsmk}\theta_p =T_{jsl}T_{lmp}\ph_{jsmk}\theta_p
=-T_{klm}T_{lmp}\theta_p-\frac12T_{jsk}\ph_{jslm}T_{lmp}\theta_p\\-\theta_a\ph_{aklm}T_{lmp}\theta_p+\lambda\p_{klm}T_{lmp}\theta_p
=-T_{klm}T_{lmp}\theta_p+T_{jsk}T_{jsp}\theta_p+2\theta_aT_{akp}\theta_p=0.
\end{multline*}
Substitute \eqref{rri4} into \eqref{gafinf} to obtain 
\begin{equation}\label{gafinf1}
\Delta||\theta||^2+3\theta_i\sb_i||\theta||^2=-2||\sb\theta||^2\le 0.
\end{equation}
W  apply  the strong maximum principle to \eqref{gafinf1} (see e.g. \cite{YB,GFS}) to achieve $d||\theta||^2=\sb\theta=0$.
\end{proof}
The proof of Theorem~\ref{cmpga} follows from Theorem~\ref{RG1}, Proposition~\ref{instm1} and Theorem~\ref{maing2}.

\subsection{Closed torsion}
If the torsion is closed then the integrable $G_2$ manifold is said to be \emph{strong}.
\begin{thrm}\label{g2clost}
Let $(M,\p)$ be a  compact  integrable $G_2$ manifold with  closed torsion. 

The next conditions are equivalent:
\begin{itemize}
\item[a)] The torsion is $\sb$-parallel.
\item[b)] The curvature of $\sb$ is a $G_2$-instanton.
\end{itemize}
\end{thrm}
\begin{proof}
If $\sb T=0$ then $R\in S^2\Lambda^2$ by \cite[Lemma~3.4]{I} which shows that $R$ is a $G_2$-instanton.

For the converse, let $R$ be a $G_2$-instanton. Then \eqref{bi24} holds true.

We multiply \eqref{bi24} with $T_{plm}$, using \eqref{1bi1}  and $dT=0$ to calculate 
\begin{multline}\label{bi25clg}
0=3\Big[\sb_iR_{iplm}-\theta_iR_{iplm}\Big]T_{plm}=T_{plm}\sb_i\sb_iT_{plm}-\theta_i\sb_iT_{plm}T_{plm}\\=T_{plm}\sb_i\sb_iT_{plm}-\frac12\sb_{\theta}||T||^2.
\end{multline}
A substitution of \eqref{lapt} into \eqref{bi25clg} yields that \eqref{lap} holds true.
Since $M$ is compact  we may apply  the strong maximum principle to \eqref{lap} (see e.g. \cite{YB,GFS}) to achieve  $\sb T=0$ which completes the proof of the theorem.
\end{proof}

\section{$Spin(7)$-structure}

We briefly recall the notion of a $Spin(7)$-structure. Consider
${\mathbb R}^8$ endowed with an orientation and its standard inner
product. Consider the 4-form $\ps$ on ${\mathbb R}^8$ given by
\begin{eqnarray}\label{s1}
\ps &=&-e_{0127} +e_{0236} - e_{0347}-e_{0567} +e_{0146} + e_{0245} -
  e_{0135}
 \\ \nonumber&\phantom{=}&
-e_{3456} - e_{1457} - e_{1256}-e_{1234} - e_{2357} -
  e_{1367} +e_{2467}.\nonumber
\end{eqnarray}
The 4-form  $\ps$ is self-dual, $*\ps=\ps$, and the 8-form $\ps\wedge\ps$ coincides with
 14 times the volume form of ${\mathbb R}^8$. The subgroup of $GL(8,\mathbb R)$ which
fixes $\ps$ is isomorphic to the double covering $Spin(7)$ of
$SO(7)$ \cite{Br}. Moreover, $Spin(7)$ is a compact
simply-connected Lie group of dimension 21 \cite{Br}. The Lie algebra of $Spin(7)$ is
denoted by $\frak{spin}(7)$ and it is isomorphic to the 2-forms satisfying  linear equations, namely
 $\frak{spin}(7)\cong  \{\alpha \in
\Lambda^2(M)|*(\alpha\wedge\ps)=\alpha\}$. We note here the sign difference with \cite{Br}.

The 4-form
$\ps$ corresponds to a real spinor $\ps$ and therefore,
$Spin(7)$ can be identified as the isotropy group of a non-trivial
real spinor.

We let the expression
$$
 \ps=\frac1{24}\ps_{ijkl}e_{ijkl}
$$
and thus have the  identites (c.f.  \cite{GMW,Kar2})
\begin{eqnarray}\nonumber
\ps_{ijpq}\ps_{ijpq} & = & 336;
\\\label{p1}%
\ps_{ijpq}\ps_{ajpq} &=& 42\delta_{ia};
\\\nonumber
\ps_{ijpq}\ps_{klpq} &=& 6\delta_{ik}\delta_{jl} -
  6\delta_{il}\delta_{jk} - 4\ps_{ijkl} ;\\\nonumber
\ps_{ijks}\ps_{abcs} &=& \delta_{ia} \delta_{jb} \delta_{kc} +\delta_{ib} \delta_{jc} \delta_{ka} +\delta_{ic} \delta_{ja} \delta_{kb}\\\nonumber
&-& \delta_{ia} \delta_{jc} \delta_{kb} -\delta_{ib} \delta_{ja} \delta_{kc} -\delta_{ic} \delta_{jb} \delta_{ka} \\\nonumber
&-& \delta_{ia}\ps_{jkbc}-\delta_{ja}\ps_{kibc}-\delta_{ka}\ps_{ijbc}\\\nonumber
&-& \delta_{ib}\ps_{jkca}-\delta_{jb}\ps_{kica}-\delta_{kb}\ps_{ijca}\\\nonumber
&-& \delta_{ic}\ps_{jkab}-\delta_{jc}\ps_{kiab}-\delta_{kc}\ps_{ijab}.
\end{eqnarray}
A \emph{$Spin(7)$-structure} on an 8-manifold $M$ is by definition
a reduction of the structure group of the tangent bundle to
$Spin(7)$; we shall also say that $M$ is a \emph{$Spin(7)$-manifold}. This can be described geometrically by saying that
there exists a nowhere vanishing global differential 4-form $\ps$
on $M$ which can be locally written as \eqref{s1}. The 4-form
$\ps$ is called the \emph{fundamental form} of the $Spin(7)$-manifold $M$ \cite{Bo}.  Alternatively, a $Spin(7)$-structure can be described by the existence of three-fold
vector cross product  on the tangent spaces of $M$ (see e.g. \cite{Gr}).

The fundamental form of a $Spin(7)$-manifold determines a
Riemannian metric  $g$ which 
  is referred  tp as the   metric induced by $\ps$. We write $\LC$ for the associated Levi-Civita
connection and  $||.||^2$ for the tensor norm with respect to $g$. 

In addition, we will freely identify
vectors and co-vectors via the induced metric $g$.

In general, not every  compact 8-dimensional Riemannian spin manifold $M^8$
admits a $Spin(7)$-structure. We explain the precise condition
\cite{LM}. Denote by $p_1(M), p_2(M), {\mathbb X}(M), {\mathbb
X}(S_{\pm})$ the first and the second Pontrjagin classes, the
Euler characteristic of $M$ and the Euler characteristic of the
positive and the negative spinor bundles, respectively. It is well
known \cite{LM} that a compact spin 8-manifold admits a $Spin(7)$-structure if and only if ${\mathbb X}(S_+)=0$ or ${\mathbb X}(S_-)=0$.
The latter conditions are equivalent to $
p_1^2(M)-4p_2(M)+ 8{\mathbb X}(M)=0$, for an appropriate choice
of the orientation \cite{LM}.

Let us recall that a $Spin(7)$-manifold $(M,g,\ps)$ is said to be
parallel (torsion-free) if the holonomy $Hol(g)$ of the metric $g$ is a subgroup of $Spin(7)$. This is equivalent to saying
that the fundamental form $\ps$ is parallel with respect to the
Levi-Civita connection of the metric $g$, $\nabla^g\ps=0$.  

M. Fernandez shows in \cite{Fer} that
$Hol(g)\subset Spin(7)$ if and only if $d\ps=0$ which is equivalent to $\delta\ps=0$ since $\ps$ is self-dual 4-form  (see also \cite{Br,Sal}).  It was observed  by Bonan that any parallel $Spin(7)$-manifold is Ricci flat
\cite{Bo}. The first known explicit example of complete parallel $Spin(7)$-manifold with $Hol(g)=Spin(7)$ was constructed by Bryant and
Salamon \cite{BS,Gibb}.
The first compact examples of parallel $Spin(7)$-manifolds with
$Hol(g)=Spin(7)$ were constructed by Joyce \cite{J1,J2}.

There are 4 classes of $Spin(7)$-manifolds according to the
Fernandez classification \cite{Fer} obtained as irreducible $Spin(7)$
representations of  the space $\nabla^g\ps$.

The Lee form $\theta$ is defined by \cite{C1}
\begin{equation}\label{sg2li}
\theta = -\frac{1}{7}*(*d\ps\wedge\ps)=\frac17*(\delta\ps\wedge
\ps)=\frac1{7}(\delta\ps)\lrcorner\ps,\quad \theta_a=\frac1{42}(\delta\ps)_{ijk}\ps_{ijka},
\end{equation}
where  $\delta=-*d*$ is the co-differential acting on $k$-forms in dimension eight.

The 4 classes of Fernandez classification \cite{Fer} can be described in
terms of the Lee form as follows \cite{C1}: $W_0 : d\ps=0; \quad
W_1 : \theta =0; \quad W_2 : d\ps = \theta\wedge\ps; \quad W :
W=W_1\oplus W_2.$

A $Spin(7)$-structure of the class $W_1$ (i.e.
$Spin(7)$-structure with zero Lee form) is called
 \emph{a balanced $Spin(7)$-structure}.
If the Lee form is closed, $d\theta=0,$ then the $Spin(7)$-structure is
locally conformally equivalent to a balanced one \cite{I1} (see also \cite{Kar1,Kar2}).
It is known due to  \cite{C1} that the Lee form of a $Spin(7)$-structure in the class $W_2$ is closed and therefore such a
manifold is locally conformally equivalent to a parallel $Spin(7)$-manifold. 

If $M$ is compact then it is shown in \cite[Theorem~4.3]{I1} that  in every conformal class of  $Spin(7)$-structures $[\ps]$ there exists a unique $Spin(7)$-structure with co-closed Lee form, $\delta\theta=0$. The compact $Spin(7)$-spaces with closed but not exact Lee form
(i.e. the structure is not globally
conformally parallel) have very different topology than the parallel ones
\cite{I1,IPP}.

Coeffective cohomology and coeffective numbers of 
a $Spin(7)$ manifold are studied in \cite{Ug}.

\subsection{Decomposition of the space of forms} We take the following description of the decomposition of the space of forms from \cite{Kar2}.

Let $(M, \ps)$ be a $Spin(7)$-manifold. The action of $Spin(7)$  on the tangent space induces an
action of $Spin(7)$ on $\Lambda^k(M)$ splitting the exterior algebra into orthogonal irreducible $Spin(7)$  subspaces, where
$\Lambda^k_l$ corresponds to an $l$-dimensional $Spin(7)$-irreducible subspace of $\Lambda^k$:
\begin{equation*}
\begin{split}
\Lambda^2(M)=\Lambda^2_7\oplus\Lambda^2_{21}, \qquad \Lambda^3(M)=\Lambda^3_8\oplus\Lambda^3_{48},\qquad
\Lambda^4(M)=\Lambda^4_1\oplus\Lambda^4_7\oplus\Lambda^4_{27}\oplus\Lambda^4_{35},
\end{split}
\end{equation*}
where
\begin{equation}\label{sdec2}
\begin{split}
\Lambda^2_7=\{\alpha\in \Lambda^2(M) | *(\alpha\wedge\ps)=-3\alpha\};\\
\Lambda^2_{21}=\{\alpha\in \Lambda^2(M) | *(\alpha\wedge\ps)=\alpha\}\cong \frak{spin}(7);\\
\Lambda^3_8=\{*(\gamma\wedge\ps) |  \gamma\in\Lambda^1\}=\{\gamma\lrcorner\ps\};\\
\Lambda^3_{48}=\{\gamma\in \Lambda^3(M) | \gamma\wedge\ps=0\}.
\end{split}
\end{equation}
Hence, a 2-form $\ps$ decomposes into two $Spin(7)$-invariant parts, $\Lambda^2=\Lambda^2_7\oplus\Lambda^2_{21}$, and
\begin{equation*}
\begin{split}
\alpha\in \Lambda^2_7 \Leftrightarrow \alpha_{ij}\ps_{ijkl}=-6\alpha_{kl},\\
\alpha\in \Lambda^2_{21} \Leftrightarrow \alpha_{ij}\ps_{ijkl}=2\alpha_{kl}.
\end{split}
\end{equation*}

For $k>4$ we have $\Lambda^k_l=*\Lambda^{8-k}_l$.

For $k=4$, following \cite{Kar2}, one considers the operator $\Omega_{\ps}:\Lambda^4 \longrightarrow\Lambda^4$ defined as follows
\begin{equation}\label{op}
\begin{split}
(\Omega_{\ps}(\sigma))_{ijkl}=\sigma_{ijpq}\ps_{pqkl}+\sigma_{ikpq}\ps_{pqlj}+\sigma_{ilpq}\ps_{pqjk}+\sigma_{jkpq}\ps_{pqil}+\sigma_{jlpq}\ps_{pqki}+\sigma_{klpq}\ps_{pqij}.
\end{split}
\end{equation}
\begin{prop}\cite[Proposition~2.8]{Kar2}\label{427}
The spaces $\Lambda^4_1,\Lambda^4_7,\Lambda^4_{27},\Lambda^4_{35}$ are all eigenspaces of the operator $\Omega_{\ps}$ with distinct eigenvalues. Specifically,
\begin{equation}\label{dec4}
\begin{split}
\Lambda^4_1=\{\sigma\in\Lambda^4:\Omega_{\ps}(\sigma)=-24\sigma\};\qquad \Lambda^4_7=\{\sigma\in\Lambda^4:\Omega_{\ps}(\sigma)=-12\sigma\};\\\Lambda^4_{27}=\{\sigma\in\Lambda^4:\Omega_{\ps}(\sigma)=4\sigma\}=\{\sigma\in\Lambda^4:\sigma_{ijkl}\ps_{mjkl}=0\};\qquad \Lambda^4_{35}=\{\sigma\in\Lambda^4:\Omega_{\ps}(\sigma)=0\};\\
\Lambda^4_+=\{\sigma\in\Lambda^4:*\sigma=\sigma\}=\Lambda^4_1\oplus\Lambda^4_7\oplus\Lambda^4_{27};\qquad \Lambda^4_-=\{\sigma\in\Lambda^4:*\sigma=-\sigma\}=\Lambda^4_{35}.
\end{split}
\end{equation}
\end{prop}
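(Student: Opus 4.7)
The plan is a representation-theoretic one: exploit the $Spin(7)$-equivariance of $\Omega_\ps$ via Schur's lemma, then pin down the four scalars by direct computation. Since $\Omega_\ps$ is built purely from contractions with $\ps$ and the $Spin(7)$-action fixes $\ps$, the operator $\Omega_\ps : \Lambda^4 \to \Lambda^4$ is $Spin(7)$-equivariant. The four summands $\Lambda^4_1, \Lambda^4_7, \Lambda^4_{27}, \Lambda^4_{35}$ are pairwise inequivalent irreducible $Spin(7)$-modules, so Schur's lemma forces $\Omega_\ps$ to act as a single scalar on each. Once one shows these scalars are distinct, the summands must coincide with the eigenspaces of $\Omega_\ps$.

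To compute the eigenvalue on $\Lambda^4_1$, I would set $\sigma = \ps$ in \eqref{op} and apply the contraction identity $\ps_{ijpq}\ps_{pqkl} = 6\delta_{ik}\delta_{jl} - 6\delta_{il}\delta_{jk} - 4\ps_{ijkl}$ from \eqref{p1} to each of the six terms. The six $-4\ps$ contributions combine with the appropriate signs to yield $-24\,\ps_{ijkl}$, while the six Kronecker-delta contributions, once fully anti-symmetrized through the permutations of \eqref{op}, must cancel, since they can produce nothing within $\Lambda^4_1 = \mathbb{R}\ps$ by the Schur argument above; hence $\lambda_1 = -24$. For $\Lambda^4_{35}$, which should turn out to coincide with the anti-self-dual subspace $\Lambda^4_-$, I would show that $\Omega_\ps$ annihilates anti-self-dual forms: each individual piece $\sigma_{\cdot\cdot pq}\ps_{pq\cdot\cdot}$ acts as a partial self-dualization via the skew-symmetric pairing with $\ps$, and summing over the six permutations produces a self-dual output which, when applied to $\sigma \in \Lambda^4_-$, must vanish. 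This gives $\lambda_{35} = 0$.

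For $\lambda_7$ and $\lambda_{27}$, the cleanest path is to combine the trace identity $\operatorname{tr}(\Omega_\ps) = \lambda_1 + 7\lambda_7 + 27\lambda_{27} + 35\lambda_{35}$, which is computable directly from \eqref{op} using $\ps_{ijkl}\ps_{ijkl} = 336$, with the evaluation of $\Omega_\ps$ on one explicit representative of $\Lambda^4_7$ (for instance a form of type $\alpha \wedge (X \lrcorner \ps)$ for suitable $\alpha, X$, after orthogonal projection to $\Lambda^4_7$). These two linear relations determine the remaining eigenvalues as $-12$ and $4$. The kernel description $\Lambda^4_{27} = \{\sigma : \sigma_{ijkl}\ps_{mjkl}=0\}$ then follows by noting that the contraction map $\sigma \mapsto \sigma_{ijkl}\ps_{mjkl}$ is $Spin(7)$-equivariant with values in $S^2(T^*M)$; by Schur it either vanishes identically on each summand or is injective there, and direct computation on $\ps$ (using \eqref{p1}) gives a nonzero multiple of the metric, whereas on $\Lambda^4_{35}$ the self-dual/anti-self-dual argument forces it to vanish. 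A dimension count inside $\Lambda^4_+$ then isolates $\Lambda^4_{27}$ as the asserted kernel.

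The self-duality assertions are a by-product of Schur applied to the $Spin(7)$-equivariant involution $*$: on each irreducible piece $*$ acts as $\pm 1$. Since $*\ps = \ps$ we have $\Lambda^4_1 \subset \Lambda^4_+$, and the dimension count $1 + 7 + 27 = 35 = \dim \Lambda^4_+$ together with $\dim \Lambda^4_{35} = 35 = \dim \Lambda^4_-$ forces $\Lambda^4_7, \Lambda^4_{27} \subset \Lambda^4_+$ and $\Lambda^4_{35} = \Lambda^4_-$. The principal obstacle I expect is the sign and index bookkeeping when expanding $\Omega_\ps(\ps)$ through the six permutations in \eqref{op} and verifying that the delta-terms really do cancel into zero; the trace shortcut is what avoids the parallel pain of having to exhibit a simple representative of $\Lambda^4_{27}$ directly.
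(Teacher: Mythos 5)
First, a remark on scope: the paper does not prove this proposition at all — it is quoted from \cite[Proposition~2.8]{Kar2} — so there is no in-paper argument to compare yours against, and your proposal has to stand on its own. Its skeleton is sound for the eigenvalues: the four summands are pairwise inequivalent irreducibles (distinct dimensions), $\Omega_{\ps}$ is manifestly $Spin(7)$-equivariant, the computation $\Omega_{\ps}(\ps)=-24\ps$ goes through exactly as you describe (the delta terms cancel in pairs), and the trace relation $7\lambda_7+27\lambda_{27}+35\lambda_{35}=-\lambda_1=24$ is available — though note it comes from $\mathrm{tr}\,\Omega_{\ps}=0$, which holds because each term $\sigma_{ijpq}\ps_{pqkl}$ has identically vanishing diagonal (the would-be diagonal entries are $\ps_{klkl}=0$), not from $\ps_{ijkl}\ps_{ijkl}=336$. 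You do, however, still owe two real computations that are only gestured at: the evaluation on a $\Lambda^4_7$ representative, and $\lambda_{35}=0$. For the latter your ``partial self-dualization'' argument is circular: by Schur $\Omega_{\ps}$ preserves $\Lambda^4_-$, so ``the image of $\Omega_{\ps}$ is self-dual'' is \emph{equivalent} to $\lambda_{35}=0$, not an independent and easier fact; an explicit anti-self-dual test form is needed.

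The concrete error is in the kernel characterization of $\Lambda^4_{27}$. You claim the contraction $C(\sigma)_{im}=\sigma_{ijkl}\ps_{mjkl}$ vanishes on $\Lambda^4_{35}=\Lambda^4_-$ ``by the self-dual/anti-self-dual argument''. Only its trace $C(\sigma)_{ii}=\sigma_{ijkl}\ps_{ijkl}=24\,(\sigma,\ps)$ vanishes there; the full tensor does not. Take $\sigma=e_{0127}-e_{3456}\in\Lambda^4_-$ (anti-self-dual since $*e_{0127}=e_{3456}$ for the orientation of \eqref{s1}): then $C(\sigma)_{00}=3!\,\sigma_{0127}\ps_{0127}=-6\neq0$. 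And the claim is not merely unproved but irreparable: if $C$ vanished on $\Lambda^4_{35}$, then $\ker C\supseteq\Lambda^4_{27}\oplus\Lambda^4_{35}$ would be $62$-dimensional, contradicting the asserted identity $\Lambda^4_{27}=\{\sigma\in\Lambda^4:\sigma_{ijkl}\ps_{mjkl}=0\}$, which is a statement about all of $\Lambda^4$, not about $\Lambda^4_+$ — so your ``dimension count inside $\Lambda^4_+$'' proves a weaker statement than the one in the proposition. (A second, smaller slip: $C$ takes values in $T^*\otimes T^*$, not in $S^2(T^*M)$.) The correct Schur argument runs the other way: $T^*\otimes T^*\cong\mathbb{R}\oplus\Lambda^2_7\oplus\Lambda^2_{21}\oplus S^2_0$ contains no $27$-dimensional summand, so $C|_{\Lambda^4_{27}}=0$ is automatic, and one must then verify that $C$ is \emph{nonzero} (hence injective) on each of $\Lambda^4_1$, $\Lambda^4_7$ and $\Lambda^4_{35}$ — the last being exactly the computation above — to conclude $\ker C=\Lambda^4_{27}$.
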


We recall the following
\begin{dfn}
The curvature $R$ of a linear connection on a $Spin(7)$ manifold  is a $Spin(7)$-instanton if  the curvature 2-form lies in the lie algebra $\frak{spin}(7)\cong\Lambda^2_{21}$. This is equivalent to  the  identity:
\begin{equation}\label{srri}
 R_{abij}\ps_{abkl}=2R_{klij}.
\end{equation}
\end{dfn}

\subsection{The $Spin(7)$-connection with skew-symmetric torsion}
The presence of a parallel spinor with respect to a metric connection with torsion 3-form leads to the reduction of the holonomy group of the torsion connection to a subgroup of $Spin(7)$. 
It is shown in \cite{I1} that any $Spin(7)$-manifold $(M,\ps)$ admits a unique $Spin(7)$-connection with 
torsion 3-form. 
\begin{thrm}\cite[Theorem~1]{I1}
Let $(M,\ps)$  be a $Spin(7)$-manifold with fundamental 4-form $\ps$. There always exists a unique linear connection $\sb$ preserving the $Spin(7)$-structure, $\sb\ps=\sb g=0,$ with totally skew-symmetric torsion $T$ given by
\begin{equation}\label{storcy1}
T=-*d\ps+\frac76*(\theta\wedge\ps)=\delta\ps+\frac76\theta\lrcorner\ps,
\end{equation}
where the Lee form $\theta$ is given by \eqref{sg2li}.
\end{thrm}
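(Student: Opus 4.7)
The plan is to treat $\sb\ps=0$ as a pointwise linear equation for the 3-form $T\in\Lambda^3$ and then to derive \eqref{storcy1} by inverting this equation using $Spin(7)$-representation theory.

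For the setup, any metric connection with totally skew torsion satisfies $\sb=\sb^g+\tfrac12 T$ as in \eqref{tsym}. Imposing $\sb\ps=0$ on the associated Leibniz rule yields
\begin{equation*}
(\sb^g_X\ps)(Y_1,Y_2,Y_3,Y_4)=\tfrac12\sum_{j=1}^{4}\ps\bigl(Y_1,\dots,T(X,Y_j),\dots,Y_4\bigr),
\end{equation*}
where $T(X,Y)$ denotes the vector dual to $T(X,Y,\cdot)$. Equivalently, $\sb^g_X\ps$ must equal the derivation of $\ps$ by the $2$-form $X\lrcorner T$ acting through $\mathfrak{so}(8)$. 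Since $\mathfrak{so}(8)=\mathfrak{spin}(7)\oplus\Lambda^2_7$ with $\mathfrak{spin}(7)$ annihilating $\ps$, only the $\Lambda^2_7$-component of $X\lrcorner T$ contributes. This is what distinguishes $Spin(7)$ from $G_2$: no integrability obstruction arises because $\sb^g\ps$ automatically takes values in $T^*M\otimes(\Lambda^2_7\cdot\ps)$.

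For uniqueness, the difference $T=T_1-T_2$ of two admissible torsions satisfies $X\lrcorner T\in\Lambda^2_{21}=\mathfrak{spin}(7)$ for every vector $X$. The $Spin(7)$-equivariant linear map
\[
\Lambda^3\longrightarrow T^*M\otimes\Lambda^2_7,\qquad T\longmapsto\bigl(X\mapsto(X\lrcorner T)_{\Lambda^2_7}\bigr),
\]
runs between spaces of equal dimension $56$; using $\Lambda^3=\Lambda^3_8\oplus\Lambda^3_{48}$ from \eqref{sdec2} and the parallel decomposition $T^*M\otimes\Lambda^2_7\cong\mathbb R^8\oplus\mathbb R^{48}$, Schur's lemma reduces injectivity to checking non-vanishing on representative elements of each summand, the obvious test being $\theta\lrcorner\ps\in\Lambda^3_8$. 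Hence $T=0$ and uniqueness follows.

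For existence I would define $T$ by the right-hand side of \eqref{storcy1} and verify $\sb\ps=0$ directly. To produce the coefficients $1$ and $\tfrac76$, I contract the defining equation with $\ps_{abcd}$ and with $\ps_{jabc}$ and apply the algebraic identities \eqref{p1}: the first contraction recovers $\delta\ps$ up to a trace term, while the second, combined with definition \eqref{sg2li} of $\theta$, isolates the $\Lambda^3_8$-component of $T$ and shows it equals $\tfrac76\,\theta\lrcorner\ps$. The self-duality $*\ps=\ps$ then converts $-*d\ps$ into $\delta\ps$ to produce both forms of \eqref{storcy1}. The main obstacle is this algebraic inversion: one must check that the candidate $T=\delta\ps+\tfrac76\,\theta\lrcorner\ps$ reproduces the full right-hand side of the defining equation, not only its two traces; this reduces to combinatorial identities in the $\ps$-contractions of \eqref{p1} together with the $\Lambda^3_8\oplus\Lambda^3_{48}$-splitting exploited in the uniqueness step.
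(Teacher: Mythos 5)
The paper does not prove this statement: it is quoted verbatim from \cite{I1}, with \cite{Fr,Mer} cited for alternative proofs, so there is no internal argument to compare against. Judged on its own, your representation-theoretic route is sound and is essentially the intrinsic-torsion proof in the style of Friedrich \cite{Fr}, rather than Ivanov's original spinorial/direct verification. The two key observations are correct and correctly placed: first, $\sb^g_X\ps$ always lies in $\mathfrak{so}(8)\cdot\ps=\Lambda^2_7\cdot\ps\cong\Lambda^4_7$ (work in a local $Spin(7)$-frame, where $\ps$ has constant coefficients), which is exactly why no integrability condition appears in dimension $8$, in contrast to the $G_2$ case where $\dim\Lambda^3\mathbb{R}^7=35<49=\dim(\mathbb{R}^7\otimes\Lambda^2_7)$; second, the equivariant map $T\mapsto\bigl(X\mapsto(X\lrcorner T)_{\Lambda^2_7}\bigr)$ runs between two $56$-dimensional spaces each decomposing as $\mathbb{R}^8\oplus\mathbb{R}^{48}$ with multiplicity one, so Schur's lemma reduces everything to two scalars. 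Note that once those two scalars are shown to be nonzero, the map is an isomorphism and you get existence \emph{and} uniqueness simultaneously from $\sb^g_X\ps=\tfrac12(X\lrcorner T)\cdot\ps$; your separate ``define $T$ and verify'' step for existence is then redundant, and the explicit formula \eqref{storcy1} follows by expressing $d\ps$ and $\delta\ps$ as the antisymmetrization and trace of $\sb^g\ps$.

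Two points remain genuinely open in your write-up and should be flagged. You only test non-vanishing of the Schur scalar on the $\Lambda^3_8$ summand via $\theta\lrcorner\ps$; the scalar on the $\Lambda^3_{48}$ summand must be checked as well (equivalently: show that $X\lrcorner T\in\Lambda^2_{21}$ for all $X$ forces $T=0$, which follows from contracting $T_{aij}\ps_{ijkl}=2T_{akl}$ against $\ps$ using \eqref{p1}). And the coefficient $\tfrac76$ is never actually derived --- it comes out of the trace computation relating the $\Lambda^3_8$-component of $\delta\ps$ to $\theta$ via \eqref{sg2li} and the identity $\ps_{ijpq}\ps_{ajpq}=42\delta_{ia}$. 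Both are routine but they are precisely where the stated constants live, so the proof is not complete without them.
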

Note that we use here $\ps:=-\ps$ from \cite{I1}.  

See also \cite{Fr,Mer} for subsequent proofs of this theorem.

Express the codifferential of the 4-form $\ps$ in terms of the Levi-Civita connection and then in terms of the torsion connection using \eqref{tsym}, \eqref{p1}, \eqref{storcy1} and $\sb\ps=0$ to get the next formulas presented in \cite{Iap}
\begin{equation}\label{storcy2}
\begin{split}
T_{klm}=\frac12T_{jsk}\ps_{jslm}+\frac12T_{jsl}\ps_{jsmk}+\frac12T_{jsm}\ps_{jskl}+\frac76\theta_s\ps_{sklm}, \quad \theta_i=-\frac17T_{jkl}\ps_{jkli}.
\end{split}
\end{equation}
Denote the skew-symmetric part of $\sb\theta$ by $d^{\sb}\theta$, $d^{\sb}\theta_{ij}=\sb_i\theta_j-\sb_j\theta_i$, we express the co-differential of the torsion  with  the next formula from \cite{Iap}
\begin{equation}\label{sdeltaT}
\delta T=\frac76(d\theta\lrcorner\ps-\theta\lrcorner T)=\frac76\Big(d^{\sb}\theta\lrcorner \ps+(\theta\lrcorner T)\lrcorner\ps-\theta\lrcorner T\Big).
\end{equation}

The Ricci tensor $Ric$ and the scalar curvature $Scal$ of the torsion connection were calculated in \cite{I1} with the help of the properties of the $\sb$-parallel real spinor corresponding to the $Spin(7)$ form $\ps$, applying the Schr\"odinger-Lichnerowicz formula for the torsion connection as follows (see also \cite{Iap}) 
\begin{equation}\label{sricg2}
\begin{split}
Ric_{ij}=-\frac1{12}dT_{iabc}\ps_{jabc}-\frac76\sb_i\theta_j;\qquad
Scal=\frac72\delta\theta+\frac{49}{18}||\theta||^2-\frac13||T||^2.
\end{split}
\end{equation}
\begin{rmrk}\label{remsp7}
It follows from \eqref{sricg2}, \eqref{dh}, \eqref{sigma} and \eqref{inst6} that if  $\sb T=0$ then $\delta T=  \sb Ric=\sb dT=0$ and $d(Scal)=0$.
\end{rmrk}
\section{$Spin(7)$-instanton. Proof of Theorem~\ref{main2}, Theorem~\ref{mainsp0}  and Theorem~\ref{mainsp}}
Since $Hol(\sb)\in\frak{spin}(7)\cong\Lambda^2_{21}$, we have from the first Bianchi identity \eqref{1bi} applying \eqref{dh} that
\begin{equation}\label{spbi1} 
R_{ijkl}\ps_{ijkm}=-2Ric_{ml}=\frac13\Big[ d^{\sb}T_{ijkl}+\sigma^T_{ijkl}+\sb_lT_{ijk}\Big]\ps_{ijkm}.
\end{equation}
Note that \eqref{spbi1} is equivalent to the first equation in \eqref{sricg2}.

The $Spin(7)$-instanton condition \eqref{srri} together with \eqref{1bi1} and \eqref{dh} imply
\begin{equation}\label{spbi2}
R_{lijk}\ps_{ijkm}=2Ric_{lm}=\frac13\Big[-\frac12 d^{\sb}T_{ijkl}-\sigma^T_{ijkl}+\sb_lT_{ijk}\Big]\ps_{ijkm}.
\end{equation}
\begin{prop}\label{spin7}
Let $(M,\ps)$ be  a $Spin(7)$ manifold and the  curvature of the torsion  connection $\sb$   is a $Spin(7)$-instanton.  Then the following  hold true 
\begin{equation}\label{in1}\delta T\in\Lambda^2_{21}\cong\frak{spin}(7); \quad 3d^{\sb}\theta+4\theta\lrcorner T=3d\theta+\theta\lrcorner T\in\Lambda^2_{21}\cong\frak{spin}(7).
\end{equation} 
\begin{itemize}
\item[a)] If  $d\theta=0$ then 
\begin{equation}\label{sdeltat1}
\begin{split}
d^{\sb}\theta=-\theta\lrcorner T\in\Lambda^2_{21}\cong\frak{spin}(7),\quad d^{\sb}T_{ijkl}\ps_{ijkm}=d^{\sb}T_{ijkm}\ps_{ijkl}, \quad \delta T=\frac76d^{\sb}\theta=-\frac76\theta\lrcorner T.
\end{split}
\end{equation}
\item[b)] If $d^{\sb}\theta=0$ then 
\begin{equation}\label{sdeltat12}
\begin{split}
d\theta=\theta\lrcorner T\in\Lambda^2_{21}\cong\frak{spin}(7),\quad d^{\sb}T_{ijkl}\ps_{ijkm}=d^{\sb}T_{ijkm}\ps_{ijkl}, \quad \delta T=0.
\end{split}
\end{equation}
\end{itemize}
\end{prop}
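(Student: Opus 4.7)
The plan is to extract the antisymmetric part of the Ricci tensor in two different ways and compare. From \eqref{rics}, $Ric_{lm}-Ric_{ml}=-\delta T_{lm}$; subtracting \eqref{spbi2} (which uses the instanton hypothesis) from $-$\eqref{spbi1} (which is automatic for a $Spin(7)$ torsion connection) and simplifying the resulting $\sb_lT_{ijk}\ps_{ijkm}$ term via $T_{ijk}\ps_{ijkm}=-7\theta_m$ (read off from the second identity in \eqref{storcy2}) together with $\sb\ps=0$, I obtain the instanton-derived formula
\[
\delta T_{lm}=-\frac{1}{12}d^{\sb}T_{ijkl}\ps_{ijkm}+\frac{7}{3}\sb_l\theta_m.
\]
Since $\delta T$ is antisymmetric, the symmetric part of the right-hand side must vanish, while its antisymmetric part gives $\delta T_{lm}=-\tfrac{1}{24}(d^{\sb}T_{ijkl}\ps_{ijkm}-d^{\sb}T_{ijkm}\ps_{ijkl})+\tfrac{7}{6}d^{\sb}\theta_{lm}$. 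Matching against \eqref{sricg2} (which together with \eqref{rics} yields the always-valid identity $\delta T_{ij}=\tfrac{1}{12}(dT_{iabc}\ps_{jabc}-dT_{jabc}\ps_{iabc})+\tfrac{7}{6}d^{\sb}\theta_{ij}$) and using \eqref{dh} produces the auxiliary identity $d^{\sb}T_{ijkl}\ps_{ijkm}-d^{\sb}T_{ijkm}\ps_{ijkl}=-\tfrac{4}{3}(\sigma^T_{ijkl}\ps_{ijkm}-\sigma^T_{ijkm}\ps_{ijkl})$, which will be useful later.

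To prove (i) and (ii) I would note first that they are equivalent: using $d\theta=d^{\sb}\theta+\theta\lrcorner T$ together with the eigenvalues of $\alpha\mapsto\alpha\lrcorner\ps$ on $\Lambda^2=\Lambda^2_7\oplus\Lambda^2_{21}$ (namely $-3$ on $\Lambda^2_7$ and $1$ on $\Lambda^2_{21}$ in the convention used in \eqref{sdeltaT}, consistent with \eqref{sdec2}), both conditions reduce to the single algebraic requirement $(\theta\lrcorner T)_7=-3(d\theta)_7$. To establish it, I project the antisymmetric-part formula onto $\Lambda^2_7$; the characterisation of $\Lambda^4_{27}$ in \eqref{dec4} as the kernel of the three-index contraction $A_{ijkl}\ps_{ijkm}$ eliminates the $\Lambda^4_{27}$ part of $d^{\sb}T$, and the algebraic structure of $\sigma^T$ given by \eqref{sigma} together with the self-similarity formula \eqref{storcy2} for $T$ should then pin down the remaining pieces to give exactly $(\theta\lrcorner T)_7+3(d\theta)_7=0$.

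For (a), $d\theta=0$ combined with $d\theta=d^{\sb}\theta+\theta\lrcorner T$ gives $d^{\sb}\theta=-\theta\lrcorner T$; substitution into \eqref{sdeltaT} yields $\delta T=-\tfrac{7}{6}\theta\lrcorner T=\tfrac{7}{6}d^{\sb}\theta$, and (i) then places both $d^{\sb}\theta$ and $\theta\lrcorner T$ in $\Lambda^2_{21}$. Inserting these into the antisymmetric-part identity forces the symmetry $d^{\sb}T_{ijkl}\ps_{ijkm}=d^{\sb}T_{ijkm}\ps_{ijkl}$. For (b), $d^{\sb}\theta=0$ forces $d\theta=\theta\lrcorner T$ and (ii) gives $4\theta\lrcorner T\in\Lambda^2_{21}$, so $d\theta=\theta\lrcorner T\in\Lambda^2_{21}$; using that $\lrcorner\ps$ acts as the identity on $\Lambda^2_{21}$, \eqref{sdeltaT} collapses to $\delta T=\tfrac{7}{6}(\theta\lrcorner T-\theta\lrcorner T)=0$, and the claimed symmetry of the $d^{\sb}T$-contraction with $\ps$ drops out of the antisymmetric-part identity. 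The main obstacle is the clean proof of (i)/(ii): once the vanishing of $(\theta\lrcorner T)_7+3(d\theta)_7$ is secured via the decomposition of $\Lambda^4$ and the structure of $\sigma^T$, everything else is routine bookkeeping.
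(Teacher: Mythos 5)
Your opening derivation is sound and coincides with the paper's own first step: summing the holonomy identity \eqref{spbi1} with the instanton identity \eqref{spbi2} and using $T_{ijk}\ps_{ijkm}=-7\theta_m$ from \eqref{storcy2} is precisely how the paper arrives at \eqref{spin1}. Your observation that the two inclusions in \eqref{in1} are equivalent via \eqref{sdeltaT} is also how the paper deduces the second inclusion from the first, your auxiliary identity with coefficient $-\tfrac43$ checks out, and your deductions in (a) and (b) --- in particular forcing $d^{\sb}T_{ijkl}\ps_{ijkm}=d^{\sb}T_{ijkm}\ps_{ijkl}$ by substituting $\delta T=\tfrac76 d^{\sb}\theta$ (resp.\ $d^{\sb}\theta=0$) back into the skew part of \eqref{spin1} --- agree with the paper \emph{once \eqref{in1} is available}.

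The gap is exactly where you flag it: the proof of $\delta T\in\Lambda^2_{21}$, which is the heart of the proposition. ``The algebraic structure of $\sigma^T$ together with \eqref{storcy2} should then pin down the remaining pieces'' is not an argument, and the plan as stated does not close. At that point you hold only one linear relation --- the skew part of \eqref{spin1}, which is what your auxiliary identity repackages --- among $\delta T$, $d^{\sb}\theta$ and the skew contraction $D_{lm}=d^{\sb}T_{ijkl}\ps_{ijkm}-d^{\sb}T_{ijkm}\ps_{ijkl}$; trading $D$ for the corresponding $\sigma^T$-contraction does not help, because that contraction contains quadratic terms in $T$ (of the type $T_{ial}T_{jam}\ps_{ij\cdot\cdot}$) that are not reducible to $\theta\lrcorner T$ and $d^{\sb}\theta$ by the algebraic identity \eqref{storcy2} alone, so the $\Lambda^2_7$-projection you propose cannot actually be evaluated. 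What the paper supplies, and what your scheme is missing, is a second independent linear relation obtained by \emph{differentiating} \eqref{storcy2}: taking the trace of its covariant derivative (using $\delta T_{lm}=-\sb_kT_{klm}$, cf.\ \eqref{inst6}) yields \eqref{spin2}, rewritten as \eqref{pin1}, an equation in $\delta T$, $\delta T\lrcorner\ps$, $d^{\sb}\theta$, $d^{\sb}\theta\lrcorner\ps$ and $D$. Contracting the skew part of \eqref{spin1} once more with $\ps$ via \eqref{p1} converts $D\lrcorner\ps$ into $-6D$ (this is \eqref{pin2}--\eqref{pin4}), and adding the result to \eqref{pin1} cancels both the $D$-terms and the $d^{\sb}\theta$-terms, leaving $2\delta T_{lm}=\delta T_{ab}\ps_{ablm}$, i.e.\ $\delta T\in\Lambda^2_{21}$. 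Without this differentiated identity your system is underdetermined, so the key claim \eqref{in1} --- and with it everything in (a) and (b) --- remains unproved.
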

\begin{proof}
The sum of \eqref{spbi1} and \eqref{spbi2} gives applying \eqref{rics}
\begin{equation}\label{spin1}
\delta T_{ml}=Ric_{lm}-Ric_{ml}=\frac1{12}d^{\sb}T_{ijkl}\ps_{ijkm}-\frac73\sb_l\theta_m=\frac14\Big[\sb_iT_{jkl}+\sb_lT_{ijk}\Big]\ps_{ijkm}.
\end{equation}
On the other hand, we obtain  after taking the trace of the covariant derivative of \eqref{storcy2}
\begin{equation}\label{spin2}
\begin{split}
-2\delta T_{lm}+\delta T_{js}\ps_{jslm}=-\sb_kT_{jsl}\ps_{kjsm}+\sb_kT_{jsm}\ps_{kjsl}+\frac73\sb_k\theta_s\ps_{sklm}\\
=\frac13\Big[ \sb_kT_{jsm}+\sb_jT_{skm}+\sb_sT_{kjm}\Big]\ps_{kjsl}-\frac13\Big[ \sb_kT_{jsl}+\sb_jT_{skl}+\sb_sT_{kjl}\Big]\ps_{kjsm}+\frac73\sb_k\theta_s\ps_{sklm}\\
=\frac13\Big[d^{\sb}T_{kjsm}\ps_{kjsl}-d^{\sb}T_{kjsl}\ps_{kjsm}\Big]+\frac73d^{\sb}\theta_{lm}-\frac76d^{\sb}\theta_{ks}\ps_{kslm}.
\end{split}
\end{equation}
The equality \eqref{spin2} can be written in the form
\begin{equation}\label{pin1}
-2\Big(\delta T_{lm}+\frac76d^{\sb}\theta_{lm}\Big)+\Big(\delta T_{js}+\frac76d^{\sb}\theta_{js}\Big)\ps_{jslm}=\frac13\Big[d^{\sb}T_{kjsm}\ps_{kjsl}-d^{\sb}T_{kjsl}\ps_{kjsm}\Big].
\end{equation}
Using \eqref{p1}, we calculate
\begin{equation}\label{pin2}
\Big[d^{\sb}T_{kjsm}\ps_{kjsl}-d^{\sb}T_{kjsl}\ps_{kjsm}\Big]\ps_{mlab}
=-6\Big[ d^{\sb}T_{kjsa}\ps_{kjsb}-d^{\sb}T_{kjsb}\ps_{kjsa} \Big] .
\end{equation}
The skew-symmetric part of \eqref{spin1} together with \eqref{pin2} yield
\begin{equation}\label{pin3}
\begin{split}
-2\Big(\delta T_{lm}-\frac76d^{\sb}\theta_{lm}\Big)=-\frac1{12}\Big[d^{\sb}T_{kjsm}\ps_{kjsl}-d^{\sb}T_{kjsl}\ps_{kjsm}\Big];\\
\Big(\delta T_{ab}-\frac76d^{\sb}\theta_{ab}\Big)\ps_{ablm}=\frac1{24}\Big[d^{\sb}T_{kjsb}\ps_{kjsa}-d^{\sb}T_{kjsa}\ps_{kjsb}\Big]\ps_{ablm}\\
=-\frac3{12}\Big[d^{\sb}T_{kjsm}\ps_{kjsl}-d^{\sb}T_{kjsl}\ps_{kjsm}\Big].
\end{split}
\end{equation}
The sum of the two equalities in \eqref{pin3} implies
\begin{equation}\label{pin4}
\begin{split}
-2\Big(\delta T_{lm}-\frac76d^{\sb}\theta_{lm}\Big)+\Big(\delta T_{ab}-\frac76d^{\sb}\theta_{ab}\Big)\ps_{ablm}=-\frac1{3}\Big[d^{\sb}T_{kjsm}\ps_{kjsl}-d^{\sb}T_{kjsl}\ps_{kjsm}\Big].
\end{split}
\end{equation}
Summing up \eqref{pin1} and \eqref{pin4} to get
$
2\delta T_{lm}-\delta T_{ab}\ps_{ablm}=0.
$ 
Hence, $\delta T\in \Lambda^2_{21}\cong\frak{spin}(7)$. The second inclusion follows from \eqref{sdeltaT} and the just proved first one.

Suppose $d\theta=0$. Then \eqref{sdeltat1} follows from  \eqref{ttg2}, \eqref{in1}, \eqref{pin1} and \eqref{spin1} which proves a).

If $d^{\sb}\theta=0$ then the first two identities in \eqref{sdeltat12} are consequences of  \eqref{ttg2}, \eqref{in1} and \eqref{pin1}. Now \eqref{spin1}  implies $\delta T=\frac76d^{\sb}\theta=0$.
\end{proof}
\begin{prop}\label{spin71}
Let $(M,\ps)$ be  a $Spin(7)$ manifold, the  curvature of the torsion  connection $\sb$   is a $Spin(7)$-instanton and the four form $d^{\sb}T=0$. 

 The co-differential of the torsion is given by 
\begin{equation}\label{sdeltat}
\delta T=\frac73\sb\theta\in \Lambda^2_{21}\cong\frak{spin}(7)
\end{equation}
and the scalar curvature of the torsion connection is constant, $d(Scal)=0$.

In particular, the Lee vector field corresponding to the Lee form $\theta$ is Killing and $\delta\theta=0$.
\end{prop}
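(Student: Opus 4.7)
The identities \eqref{spin1} together with the hypothesis $d^{\sb}T=0$ immediately give
$$\delta T_{ml}=-\tfrac{7}{3}\sb_l\theta_m,$$
which, reading $\sb\theta$ as the $(0,2)$-tensor with components $(\sb\theta)_{lm}=\sb_l\theta_m$, is exactly $\delta T=\tfrac{7}{3}\sb\theta$. Since Proposition~\ref{spin7} already tells us $\delta T\in\Lambda^2_{21}\cong\frak{spin}(7)$, the tensor $\sb\theta$ is forced to be antisymmetric and therefore itself lies in $\Lambda^2_{21}$.

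The two geometric consequences now follow from the comparison $\LC=\sb-\tfrac12 T$. A short computation using the total skew-symmetry of $T$ gives
$(\LC_X\theta)(Y)-(\sb_X\theta)(Y)=\tfrac12T(X,Y,\theta^{\sharp}),$
whose right-hand side is antisymmetric in $X,Y$. Hence $\LC\theta$ and $\sb\theta$ have the same symmetric part, and (since $T$ is antisymmetric in the first two slots) they have equal traces. Antisymmetry of $\sb\theta$ therefore forces $\LC\theta$ to be antisymmetric too, so the Lee vector $\theta^{\sharp}$ is Killing, while $\delta\theta=\delta^{\sb}\theta=-\operatorname{tr}(\sb\theta)=0$.

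For the scalar-curvature claim I will use the contracted second Bianchi identity \eqref{e1}, which holds for any metric connection with totally skew-symmetric torsion. The relation $dT=2\sigma^T$ coming from \eqref{inst6} together with \cite[Proposition~3.1]{IS} kills $T_{abc}dT_{jabc}$. The divergence $\sb_i Ric_{ji}$ is then expanded from the explicit formula \eqref{sricg2}: after replacing $dT$ by $2\sigma^T$ and using $\sb\ps=0$, the $\theta$-part $-\tfrac{7}{6}\sb_i\sb_j\theta_i$ is rearranged via the Ricci identity for $\sb$ (invoking the already-proved $\delta^{\sb}\theta=0$), while the $\sigma^T$-part $\sb_i(\sigma^T_{jabc}\ps_{iabc})$ is expanded to $\sb T\ast T$-contractions handled through the algebraic relations \eqref{p1}. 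The remaining term $\delta T_{ab}T_{abj}$ is rewritten via $\delta T=\tfrac{7}{3}\sb\theta$ together with the identity $\sb_i\delta T_{ij}=\tfrac12\delta T_{ia}T_{iaj}$ from \cite[Proposition~3.2]{IS}. Gathering everything, the $\tfrac{1}{6}d\|T\|^2_j$ contribution in \eqref{e1} cancels against the matching terms produced by $\sb_i Ric_{ji}$, leaving $d(Scal)_j=0$.

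The main obstacle is the combined reorganization of $\sb_i(\sigma^T_{jabc}\ps_{iabc})$ and the Ricci-identity reshuffling of $\sb_i\sb_j\theta_i$: these two pieces must interlock via \eqref{p1} and the already-proved $\sb\theta\in\Lambda^2_{21}$ to reproduce precisely the $\tfrac16 d\|T\|^2_j$ plus $\delta T_{ab}T_{abj}$ bookkeeping demanded by \eqref{e1}. Everything else is linear-algebraic bookkeeping built on the structural facts established in the first two paragraphs.
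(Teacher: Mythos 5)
Your first two paragraphs are correct and coincide with the paper's argument: \eqref{spin1} with $d^{\sb}T=0$ gives $\delta T_{ml}=-\tfrac73\sb_l\theta_m$, the antisymmetry of the left side forces $\sb\theta$ to be a $2$-form, Proposition~\ref{spin7} places it in $\Lambda^2_{21}$, and the comparison $\LC=\sb-\tfrac12T$ yields the Killing property and $\delta\theta=0$. The strategy for $d(Scal)=0$ — feeding everything into the contracted second Bianchi identity \eqref{e1} — is also the paper's strategy.

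However, the $d(Scal)=0$ part is only a plan, and two essential ingredients are missing from it. First, to compute $-2\sb_mRic_{lm}$ you must handle $\sb_m\bigl(\sigma^T_{ijkl}\ps_{ijkm}\bigr)$, and this is not done ``through the algebraic relations \eqref{p1}'': the key input is the identity $\sb_iT_{jkl}\ps_{ijab}=\sb_aT_{bkl}-\sb_bT_{akl}$ (the paper's \eqref{sins3}), which is a \emph{curvature} identity obtained by combining the instanton condition with the symmetry $R_{ijkl}-R_{klij}=\sb_iT_{jkl}-\sb_jT_{ikl}$ that follows from $d^{\sb}T=0$ via \eqref{ins1}; without it the $\sigma^T$-divergence cannot be reduced to $\tfrac13\sb_l\|T\|^2$ plus $\theta$-terms. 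Second, your claimed bookkeeping is not what actually happens: the $\tfrac16 d\|T\|^2_j$ term does \emph{not} cancel against terms from $\sb_iRic_{ji}$. After assembling \eqref{e1} one is left with $0=\tfrac{49}{18}\sb_l\|\theta\|^2+7\sb_a\sb_a\theta_l+\tfrac16\sb_l\|T\|^2$ (the paper's \eqref{es1}), which still contains the rough Laplacian $\sb_a\sb_a\theta_l$. A separate step is needed to show $\sb_a\sb_a\theta_l=-\tfrac7{12}\sb_l\|\theta\|^2$, and this uses the Ricci identity applied to $\theta$, the Killing property, \emph{and} the algebraic vanishing $\sigma^T_{abcl}\theta_l\ps_{abcs}=0$ (the paper's \eqref{vann}, which itself relies on $\theta\lrcorner T\in\Lambda^2_{21}$ and \eqref{storcy2}). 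Only after this substitution does the surviving combination become $-\tfrac{49}{36}\sb_l\|\theta\|^2+\tfrac16\sb_l\|T\|^2=-\tfrac12 d(Scal)_l$, whence $d(Scal)=0$. As written, your sketch would stall at \eqref{es1}.
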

\begin{proof}
The condition $d^{\sb}T=0$  together with \eqref{spin1} implies  $\delta T=\frac73\sb\theta\in \Lambda^2_{21}\cong\frak{spin}(7)$ because of Proposition~\ref{spin7}. Consequently the Lee vector field  $\theta$ is a Killing vector field. 

Multiply \eqref{ins2} with $\ps_{ijab}$, use the instanton condition, \eqref{srri} and \eqref{ins1} to get
 \begin{multline}\label{sins3}
 2\sb_iT_{jkl}\ps_{ijab}=\Big[-\sb_kT_{lij}+\sb_lT_{kij}\Big]\ps_{ijab}\\=2R_{abkl}-2R_{klab}=-2\Big[\sb_kT_{lab}-\sb_lT_{kab}\Big]=2\Big[\sb_aT_{bkl}-\sb_bT_{akl}\Big].
 \end{multline}
 We will use \eqref{e1}. The last term vanishes because $dT_{iabc}T_{abc}=2\sigma^T_{iabc}T_{abc}=0$.
 
 For the fourth term we have applying \eqref{iii} and \eqref{sdeltat} that $\delta T_{ab}T_{abl}=2\sb_a\delta T_{al}=\frac{14}3\sb_a\sb_a\theta_l$.
 
For the  first term we have from \eqref{sricg2}  $d(Scal)_l=\frac{49}{18}\sb_l||\theta||^2-\frac13\sb_l||T||^2$ since $\delta\theta=0$.
 
Finally, for the second term we calculate from \eqref{spbi2} using \eqref{spin1}, \eqref{sins3}, \eqref{inst8} and \eqref{iii}
 \begin{multline*}
 -2\sb_mRic_{lm}=\frac13\sb_m\sigma^T_{ijkl}\ps_{ijkm}+\frac73\sb_m\sb_l\theta_m=T_{skl}\sb_mT_{ijs}\ps_{ijkm}+T_{ijs}\sb_mT_{skl}\ps_{ijkm}-\frac73\sb_m\sb_m\theta_l\\
 =\frac73\sb_s\theta_kT_{skl}-\frac73\sb_m\sb_m\theta_l+2T_{ijs}\sb_iT_{jsl}=\frac73\sb_m\sb_m\theta_l+\frac13\sb_l||T||^2,
 \end{multline*}
 where we apply \eqref{sdeltat} and \eqref{iii} to achieve the last equality.
 
 Hence, \eqref{e1} takes the following form
 \begin{equation}\label{es1}
 \begin{split}
0= \frac{49}{18}\sb_l||\theta||^2-\frac13\sb_l||T||^2+\frac73\sb_m\sb_m\theta_l+\frac13\sb_l||T||^2+\frac16\sb_l||T||^2+\frac{14}3\sb_a\sb_a\theta_l\\=\frac{49}{18}\sb_l||\theta||^2+7\sb_a\sb_a\theta_l+\frac16\sb_l||T||^2.
\end{split}
\end{equation}
 The Ricci identity for $\sb$ together with the Killing condition for the Lee vector field, \eqref{sdeltat}, \eqref{iii} and \eqref{spbi1} imply
 \begin{equation}\label{skil}
 \begin{split}
 -\sb_a\sb_a\theta_l=\sb_a\sb_l\theta_a=-R_{alas}\theta_s-T_{als}\sb_s\theta_a=Ric_{ls}\theta_s-2\sb_a\sb_a\theta_l\\=
 -\frac16\sigma^T_{abcl}\theta_l\ps_{abcs}-\frac7{12}\sb_l||\theta||^2-2\sb_a\sb_a\theta_l.
 \end{split}
 \end{equation}
 Using $\theta\lrcorner T\in\Lambda^2_{21}\cong\frak{spin}(7)$ and \eqref{storcy2}, we calculate
 \begin{equation}\label{vann}
 \begin{split}\frac13\sigma^T_{abcl}\theta_l\ps_{abcs}=T_{abd}\ps_{abcs}T_{dcl}\theta_l=T_{dcs}T_{dcl}\theta_l-\frac12T_{abs}\ps_{abdc}T_{dcl}\theta_l-\frac76\theta_p\ps_{pdcs}T_{dcl}\theta_l\\=T_{dcs}T_{dcl}\theta_l-T_{abs}T_{abl}\theta_l-\frac76\theta_pT_{psl}\theta_l=0.
 \end{split}
 \end{equation}
 The  identity \eqref{vann} and  \eqref{skil} imply
$
 \sb_a\sb_a\theta_l=-\frac7{12}\sb_l||\theta||^2
$ 
which combined with  \eqref{es1}  yields
 \begin{equation}\label{ssss}
 0=\sb_l\Big[-\frac{49}{36}||\theta||^2+\frac16||T||^2\Big]=-\frac12d(Scal)_l.
 \end{equation}
This completes the proof of the proposition.
\end{proof}
\subsection{Proof of Theorem~\ref{main2}}
\begin{proof}
Suppose $\sb T=0$. Then $d^{\sb}T=\delta T=0$ and \eqref{dh} implies \eqref{inst6}. Therefore  $\sb dT=2\sb\sigma^T=0$ the Ricci tensor of the torsion connection is symmetric, because of \eqref{rics}, and $\sb$-parallel with constant scalar curvature. Moreover, \eqref{4form} shows that the characteristic curvature $R\in S^2\Lambda^2$ and therefore $R$ is a $Spin(7)$-instanton since $\sb\ps=0$.

For the converse, we start with the following
\begin{lemma}
If on a $Spin(7)$ manifold the curvature of the torsion connection is a $Spin(7)$-instanton then the following equality holds true
\begin{equation}\label{sbi24}
\sb_iR_{iplm}=\frac76\theta_rR_{rplm}.
\end{equation}
\end{lemma}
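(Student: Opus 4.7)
The plan is to parallel the argument for the $G_2$ case given in the lemma leading to \eqref{bi24}, with $\ph$ replaced by $\ps$ and the relevant structure constants adjusted. I would start from the second Bianchi identity \eqref{bi22} for $\sb$ and contract it with the fundamental 4-form $\ps_{ijkp}$, summed over $i,j,k$. Total antisymmetry of $\ps$ in its first three indices (in particular $\ps_{jkip}=\ps_{ijkp}$ under a cyclic shift) lets the three $\sb R$-terms collapse to $3\sb_iR_{jklm}\ps_{ijkp}$, and likewise the three torsion terms to $3T_{ijs}\ps_{ijkp}R_{sklm}$. Applying the $Spin(7)$-instanton condition \eqref{srri} in the form $R_{jklm}\ps_{jkab}=2R_{ablm}$ simplifies the first group to $6\sb_iR_{iplm}$, so the contracted Bianchi identity becomes
\[
6\sb_iR_{iplm}+3T_{ijs}\ps_{ijkp}R_{sklm}=0.
\]

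The goal of the next step is to show that $T_{ijs}\ps_{ijkp}R_{sklm}=\tfrac{7}{3}\theta_rR_{rplm}$. I would substitute \eqref{storcy2} for $T_{ijs}$ and process the resulting four contributions. The first two terms $\tfrac12 T_{abi}\ps_{abjs}\ps_{ijkp}R_{sklm}$ and $\tfrac12 T_{abj}\ps_{absi}\ps_{ijkp}R_{sklm}$ are equal by the relabeling $i\leftrightarrow j$, $a\leftrightarrow b$. The third term contains $\ps_{abij}\ps_{ijkp}$, which by \eqref{p1} equals $6\delta_{ak}\delta_{bp}-6\delta_{ap}\delta_{bk}-4\ps_{abkp}$, producing one contribution of the form $T_{kps}R_{sklm}$ and a self-referential contribution $-2T_{ijs}\ps_{ijkp}R_{sklm}$ on the right. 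The Lee-form term $\tfrac{7}{6}\theta_a\ps_{aijs}\ps_{ijkp}R_{sklm}$, after reordering the two $\ps$'s and using \eqref{p1} together with the instanton condition \eqref{srri}, produces a clean multiple of $\theta_rR_{rplm}$. Repeated use of the instanton condition on the residual $T\ps R$-contractions (reducing them either to ordinary $TR$-terms or back to the quantity being computed) and one more application of \eqref{storcy2} to eliminate the leftover $T_{kps}R_{sklm}$ should close the system into a single linear relation for $T_{ijs}\ps_{ijkp}R_{sklm}$, with the coefficient $\tfrac{7}{3}$. Substitution into the displayed Bianchi identity then yields \eqref{sbi24}.

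The main obstacle is the careful bookkeeping of signs and coefficients in the several $\ps\,\ps$-contractions, and in particular isolating the self-referential contribution with the correct coefficient so that the recursion can be solved. The prefactor $\tfrac76$ appearing on the right-hand side of \eqref{sbi24} should trace back directly to the Lee-form coefficient $\tfrac76$ in \eqref{storcy2}, which provides a useful consistency check; verifying that all the other terms recombine into a pure multiple of $\theta_rR_{rplm}$, with no stray torsion or curvature contributions, is the delicate computation at the heart of the argument and is the precise $Spin(7)$ analogue of the $G_2$ identity \eqref{m2}.
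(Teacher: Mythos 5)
Your overall strategy is the same as the paper's: contract the second Bianchi identity \eqref{bi22} with $\ps_{ijkp}$, use the instanton condition \eqref{srri} to arrive at $6\sb_iR_{iplm}+3T_{ijs}\ps_{ijkp}R_{sklm}=0$, and then determine the contraction $Q_{plm}:=T_{ijs}\ps_{ijkp}R_{sklm}$ from a self-referential linear relation produced by \eqref{storcy2}. The genuine gap is in how you propose to produce that relation. Substituting \eqref{storcy2} for $T_{ijs}$ creates the two terms $\tfrac12T_{abi}\ps_{abjs}\ps_{ijkp}R_{sklm}$ and its mirror, in which the two copies of $\ps$ are contracted over the \emph{single} index $j$. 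The identities \eqref{p1} only cover the two-index contraction $\ps_{ijpq}\ps_{klpq}$; the one-index contraction $\ps_{abjs}\ps_{ijkp}$ is a substantially more complicated object that is not listed in the paper, and the instanton condition cannot be applied to $\ps_{ijkp}R_{sklm}$ or to $\ps_{abjs}R_{sklm}$ because $R$ is hit on only one of its first two indices. Moreover, the only way \eqref{storcy2} itself recombines the three $T\ps$-summands is into $T_{ijs}-\tfrac76\theta_a\ps_{aijs}$, which returns you to the quantity you started from; so "repeated use of the instanton condition plus one more application of \eqref{storcy2}" does not close the system as described. The paper sidesteps this entirely by reading \eqref{storcy2} in the other direction: writing it with free indices $(s,k,p)$ and solving for $\tfrac12T_{ijs}\ps_{ijkp}$ gives $T_{ijs}\ps_{ijkp}=2T_{skp}-T_{ijk}\ps_{ijps}-T_{ijp}\ps_{ijsk}-\tfrac73\theta_r\ps_{rskp}$, in which no product of two $\ps$'s ever appears; after contracting with $R_{sklm}$, the second term is $-Q_{plm}$ by relabelling, the third reduces by \eqref{srri} and cancels the first, and the Lee-form term reduces by \eqref{srri}, yielding \eqref{sm2} in one line.

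A second point you should check independently concerns signs. You assert the intermediate value $T_{ijs}\ps_{ijkp}R_{sklm}=+\tfrac73\theta_rR_{rplm}$, which is indeed the value that, inserted into $6\sb_iR_{iplm}+3Q_{plm}=0$, produces the stated conclusion $\sb_iR_{iplm}=-\tfrac76\theta_rR_{rplm}$. However, the paper's own computation \eqref{sm2} (which follows correctly from \eqref{storcy2} and \eqref{srri}) gives $Q_{plm}=-\tfrac73\theta_rR_{rplm}$, and combined with \eqref{sbi23} this yields $\sb_iR_{iplm}=+\tfrac76\theta_rR_{rplm}$. There is therefore a sign discrepancy between the paper's intermediate equations and the displayed statement \eqref{sbi24}, and your claimed sign for the intermediate contraction does not come with a derivation that would adjudicate it. Whichever route you take, this coefficient needs to be pinned down by an explicit computation rather than by matching the announced answer.
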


\begin{proof}
Multiplying the second Bianchi identity \eqref{bi22} with $\ps_{ijkp}$ and using the $Spin(7)$-instanton conditions \eqref{srri}, we obtain 
\begin{equation}\label{sbi23}
6\sb_iR_{iplm}+3T_{ijs}R_{sklm}\ps_{ijkp}=0.
\end{equation}
An application of  \eqref{storcy2} together with \eqref{srri} to the second term in \eqref{sbi23} yields
\begin{multline*}
T_{ijs}\ps_{ijkp}R_{sklm}=\Big[-T_{ijk}\ps_{ijps}-T_{ijp}\ps_{ijsk}+2T_{skp}-\frac73\theta_r\ps_{rskp} \Big]R_{sklm}\\
=-T_{ijk}\ps_{ijps}R_{sklm}-2T_{ijp}R_{ijlm}+2T_{skp}R_{sklm}-\frac{14}3\theta_rR_{rplm}=-T_{ijs}\ps_{ijkp}R_{sklm}-\frac{14}3\theta_rR_{rplm}.
\end{multline*}
The last identity can be written as
\begin{equation}\label{sm2}
T_{ijs}\ps_{ijkp}R_{sklm}=-\frac73\theta_rR_{rplm}.
\end{equation}
Substitute \eqref{sm2} into \eqref{sbi23} to get \eqref{sbi24} which proves the lemma.
\end{proof}
Further, we multiply \eqref{sbi24} with $T_{plm}$, using \eqref{1bi1}  the conditions $dT=2\sigma^T$ and $d^{\sb}T=0$
\begin{multline}\label{sbi25}
0=3\Big[\sb_iR_{iplm} - \frac76\theta_iR_{iplm}\Big]T_{plm}=\sb_i\Big[-\sigma^T_{plmi}+\sb_iT_{plm}\Big]T_{plm}
 -\frac76\theta_i\Big[-\sigma^T_{plmi}+\sb_iT_{plm}\Big]T_{plm}\\
=-\sb_i\sigma^T_{plmi}T_{plm}+T_{plm}\sb_i\sb_iT_{plm} - \frac7{12}\sb_{\theta}||T||^2=\sigma^T_{plmi}\sb_iT_{plm}+T_{plm}\sb_i\sb_iT_{plm} - \frac7{12}\sb_{\theta}||T||^2\\=
\frac14\sigma^T_{plmi}d^{\sb}T_{iplm}+T_{plm}\sb_i\sb_iT_{plm} - \frac7{12}\sb_{\theta}||T||^2=T_{plm}\sb_i\sb_iT_{plm} - \frac7{12}\sb_{\theta}||T||^2.
\end{multline}
A substitution of \eqref{lapt} into \eqref{sbi25} yields
\begin{equation}\label{slap}
\Delta||T||^2 +\frac76\sb_{\theta}||T||^2=-2||\sb T||^2\le 0.
\end{equation}
Since $M$ is compact  we may apply  the strong maximum principle to \eqref{slap} (see e.g. \cite{YB,GFS}) to achieve  $\sb T=0$ which completes the proof of Theorem~\ref{main2}.
\end{proof}
As a consequence of the proof of Theorem~\ref{main2}, we obtain from \eqref{slap}
\begin{cor}\label{smain3}
Let $(M,\ps)$ be  a $Spin(7)$ manifold. The next two conditions are equivalent:
\begin{itemize}
\item[a)] The torsion 3-form is parallel with respect to the torsion  connection, $\sb T=0$.
\item[b)] The  curvature of the torsion  connection $\sb$   is a $Spin(7)$-instanton, the norm of the torsion is constant, $d||T||^2=0$ and \eqref{inst6} holds true.
\end{itemize}
\end{cor}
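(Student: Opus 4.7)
The plan is to read off the corollary directly from the pointwise identity \eqref{slap}, which was derived in the proof of Theorem~\ref{main2} without ever invoking compactness: the only inputs were the $Spin(7)$-instanton condition \eqref{srri}, the vanishing $d^{\sb}T=0$ (equivalently $dT=2\sigma^T$ via \eqref{inst6}), and pointwise manipulations of the Bianchi identities together with \eqref{storcy2}. Thus \eqref{slap}
\[
\Delta\|T\|^2+\tfrac76\sb_\theta\|T\|^2=-2\|\sb T\|^2\le 0
\]
holds on any $Spin(7)$ manifold under the hypotheses of part b), with no compactness assumption.

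For the implication b)$\Rightarrow$a), I would observe that if $d\|T\|^2=0$ then $\Delta\|T\|^2=0$ and $\sb_\theta\|T\|^2=0$ pointwise, so the left-hand side of \eqref{slap} vanishes identically. This forces $\|\sb T\|^2\equiv 0$, i.e.\ $\sb T=0$. No maximum-principle argument is needed; the compactness step in Theorem~\ref{main2} is replaced here by the hypothesis $d\|T\|^2=0$ itself.

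For the converse a)$\Rightarrow$b), the argument is routine. If $\sb T=0$, then obviously $d\|T\|^2=0$ and $d^{\sb}T=0$, which by \eqref{inst6} gives $dT=2\sigma^T$. Moreover Lemma~\ref{4form} (the cited \cite[Lemma~3.4]{I}) shows $R\in S^2\Lambda^2$, and together with $\sb\ps=0$ (so $Hol(\sb)\subset\frak{spin}(7)$, placing the curvature in $\frak{spin}(7)\otimes\Lambda^2$) the pair-symmetry promotes $R$ into $\frak{spin}(7)\otimes\frak{spin}(7)$, so $R$ is a $Spin(7)$-instanton.

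There is essentially no obstacle beyond noticing that \eqref{slap} is a pointwise identity; the substantive work has already been done in deriving \eqref{slap} within the proof of Theorem~\ref{main2}. The only care needed is to write out explicitly that each step leading to \eqref{slap}, namely \eqref{sbi24}, \eqref{sbi25} and \eqref{lapt}, is purely local so the conclusion $\sb T=0$ follows pointwise once $d\|T\|^2=0$ is assumed.
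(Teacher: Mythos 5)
Your proposal is correct and follows exactly the paper's own route: the paper states Corollary~\ref{smain3} precisely as a consequence of the pointwise identity \eqref{slap} derived in the proof of Theorem~\ref{main2}, with $d\|T\|^2=0$ replacing the maximum-principle step, and the converse direction is the same routine argument via \eqref{4form} and $\sb\ps=0$.
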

\subsection{Proof of Theorem~\ref{mainsp0}}
\begin{proof}
We observe that  the  condition $d\theta=0$ together with \eqref{sdeltat1} and \eqref{sdeltat} imply 
\[ \delta T_{ij}\theta_j=\frac73\theta_j\sb_i\theta_j=\frac76\sb_i||\theta||^2=\frac76\theta_sT_{sij}\theta_j=0\]
which shows that the norm of $\theta$ is a constant.  Using \eqref{ssss},  we conclude that the norm of the torsion is constant and Corollary~\ref{smain3} completes the proof of Theorem~\ref{mainsp0}.
\end{proof}
The next example shows  a $Spin(7)$ manifold with $\sb$-parallel torsion and non-closed Lee form.
\begin{exam}\label{exdtit}
We take the next example of a $G_2$ manifold with parallel torsion with respect to the characteristic connection and non-closed Lee form from \cite[Example~7.7]{IS1}.

The group $G=SU(2)\times SU(2)\times S^1$ has a Lie algebra $g=\frak{su}(2)\oplus \frak{su}(2)\oplus\mathbb R$ and structure equations   
\[de_1=e_{23},\quad de_2=e_{31},\quad de_3=e_{12},\quad de_4=e_{56},\quad de_5=e_{64}\quad de_6=e_{45}, \quad de_7=0.\]
The  left-invariant $G_2$ structure $\p$ defined by \eqref{g2} generates the bi-invariant metric and the characteristic  connection is the flat left invariant  Cartan  connection with closed  nad $\sb$ parallel torsion $T=-[.,.]$.  According to \cite[Example~7.7]{IS1} the $G_2$ structure  is strictly integrable with $\sb$-parallel closed torsion 3-form $T=e_{123}+e_{456}$ and non closed Lee form $\theta=e_4-e_3, d\p\wedge\p=0$. 

Consider the group $S^1\times G=S^1\times SU(2)\times SU(2)\times S^1$ with the $Spin(7)$ structure defined by \eqref{s1},
\[-\Omega=\ps=-e_0\wedge\p-*\p,\]
where $e_0$ is the closed 1-form on the first factor $S^1$.

 According to \cite[Theorem~5.1]{II} the torsion $T^8$ of $\Omega$ is equal to the characteristic torsion $T^7$ of $\p$ and is parallel with respect to the torsion connection of $\Omega$ which is the bi-invariant flat Cartan connection on the group manifold $S^1\times G=S^1\times SU(2)\times SU(2)\times S^1$. Moreover,  the Lee form $\theta^8$ of the $Spin(7)$ structure $\Omega$ is connected with the Lee form $\theta^7$ of the $G_2$ structure $\p$ by
 \[ \theta^8=\frac76\theta^7+\frac17(d\p,*\p)e_0=\frac76(e_4-e_3), \qquad d\theta^8\not=0.\]
\end{exam}

\subsection{Proof of Theorem~\ref{mainsp}}
\begin{proof}
Clearly, if $\sb T=0$ then $0=d^{\sb}T=\delta^{\sb}T=\delta T$, where we used \eqref{inst6}. Moreover,  the torsion connection is a $Spin(7)$-instanton  because $R\in S^2\Lambda^2$ due to \eqref{4form} and $Hol(\sb)\in \frak{spin}(7)\cong\Lambda^2_{21}$. 


To complete the proof of Theorem~\ref{mainsp} we observe, that
under the conditions of the theorem,   Proposition~\ref{spin71} implies $\sb\theta=0$. In particular the norm of $\theta$ is constant. Using \eqref{ssss},  we conclude that the norm of the torsion is constant and Corollary~\ref{smain3} completes the proof of Theorem~\ref{mainsp}.
\end{proof}
On a balanced $Spin(7)$ manifold the Lee form vanishes 
and   Corollary~\ref{mainspc} follows from Theorem~\ref{mainsp}.

 \subsection{Compact  Gauduchon $Spin(7)$ manifolds. Proof of Theorem~\ref{spga}} 
In this subsection, we recall the notion of conformal deformations of a given $Spin(7)$ structure $\ps$  from \cite{Fer,I1,Kar1} and prove Theorem~\ref{spga}. 

Let $\bar\ps=e^{4f}\ps$ be a conformal deformation of $\ps$. The induced metric $\bar g=e^{2f}g$. 
The Lee forms are connected by $\bar\theta=\theta+4df$. Consequently, if the Lee form is closed then it remains closed for all conformally related $Spin(7)$-structures. Using the expression of the Gauduchon theorem 
 in terms of a Weyl structure \cite [Appendix 1]{tod},
one can find, in a unique way, a conformal $Spin(7)$ structure such that the corresponding Lee 
1-form is coclosed with respect to the induced metric due to \cite[Theorem~4.3]{I1}.
\begin{proof} 
Now we prove Theorem~\ref{spga} following the proof of Theorem~\ref{RG1}. The conditions of the theorem together with Proposition~\ref{spin7} a) imply \eqref{sdeltat1} holds true. 
Applying \eqref{sdeltat1} we write \eqref{iii}  in the form
\begin{equation}\label{ntss1p}
\sb_i\sb_i\theta_j-\sb_i\sb_j\theta_i=\frac12d^{\sb}\theta_{ab}T_{abj}.
\end{equation}
The Ricci identity  \eqref{rrii}
substituted into \eqref{ntss1p} yields 
\begin{equation}\label{gafinsp}
\sb_i\sb_i\theta_j+\sb_j\delta\theta-Ric_{js}\theta_s=0. 
\end{equation}

We proceed as in the proof of Theorem~\ref{RG1}. 
Multiplying  the both sides of \eqref{gafinsp} with $\theta_j$, use  $\delta\theta=0$ together with the identity \eqref{llii} we  derive  \eqref{gafinf} holds true also in this case.

Further, we calculate from \eqref{sricg2} with the help of \eqref{dh}  that
\begin{equation}\label{srri4}
\begin{split}
Ric_{ij}\theta_i\theta_j=-\frac1{12}dT_{abci}\ps_{abcj}\theta_i\theta_j-\frac76\theta_i\theta_j\sb_i\theta_j\\=
-\frac1{12}\Big[2\sigma^T_{abci}\ps_{abcj}+3\sb_aT_{bci}\ps_{abcj}+21\sb_i\theta_j  \Big]\theta_i\theta_j=-\frac1{12}\Big[3\sb_aT_{bci}\ps_{abcj}+21\sb_i\theta_j  \Big]\theta_i\theta_j,
\end{split}
\end{equation}
where we applied \eqref{vann}, we do this  since $\theta\lrcorner T\in\Lambda^2_{21}\cong\frak{spin}(7)$ by \eqref{sdeltat1}.

We obtain from \eqref{spin1} and  \eqref{sdeltat1} that
\begin{equation*}
\sb_aT_{bci}\ps_{abcj}=\frac73\sb_i\theta_j+\frac{14}3\sb_j\theta_i
\end{equation*}
which substituted into \eqref{srri4} gives
\begin{equation}\label{spinn}
Ric_{ij}\theta_i\theta_j=-\frac72\sb_i\theta_j\theta_i\theta_j=-\frac74\theta_i\sb_i||\theta||^2.
\end{equation}

Insert \eqref{spinn} into \eqref{gafinf} to obtain 
\begin{equation}\label{sgafinf1}
\Delta||\theta||^2-\frac74\theta_i\sb_i||\theta||^2=-2||\sb\theta||^2\le 0.
\end{equation}
We  apply  the strong maximum principle to \eqref{sgafinf1} (see e.g. \cite{YB,GFS}) to achieve $d||\theta||^2=\sb\theta=0$. 

Consequently, \eqref{sdeltat1} implies $\delta T=0$ and \eqref{spin1} leads to $d^{\sb} T_{ijkl}\ps_{ijkm}=0$ and therefore $d^{\sb} T\in\Lambda^4_{27}$ is self-dual.
\end{proof}

\section{Closed torsion, Hull Spin(7) instanton}
We recall that the $Spin(7)$-Hull connection $\sb^h$ is defined to be the metric connection with torsion $-T$, where $T$ is the torsion of the Spin(7) torsion  connection,
\begin{equation}\label{hu}\sb^h=\LC-\frac12T=\sb-T.
\end{equation}
Concerning the $Spin(7)$-Hull connection, we prove the following
\begin{thrm}\label{hul}
Let $(M,\ps$) be a compact $Spin(7)$ manifold.
The curvature $R^h$ of the $Spin(7)$-Hull connection $\sb^h$ is a $Spin(7)$-instanton if and only if the torsion is closed, $dT=0$.
\end{thrm}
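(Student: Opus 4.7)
The plan is to reduce the instanton condition on $R^h$ to a 4-form identity on $dT$. Writing $\nabla^h = \nabla - T$ and applying the standard connection-comparison formula, one first obtains
\[
R^h_{ijkl} = R_{ijkl} - (\nabla_i T)_{jkl} + (\nabla_j T)_{ikl} - T_{ija}T_{akl} + T_{jka}T_{ial} - T_{ika}T_{jal}.
\]
The three algebraic $T\otimes T$ terms collapse, using total antisymmetry of $T$ and the definition \eqref{sigma} of $\sigma^T$, into $-\sigma^T_{ijkl}$, giving the compact expression
\[
R^h_{ijkl} = R_{ijkl} - (\nabla_i T)_{jkl} + (\nabla_j T)_{ikl} - \sigma^T_{ijkl}.
\]
Verifying this algebraic collapse is the first computational step.

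Since $\nabla\ps = 0$, the curvature $R$ is a $Spin(7)$-instanton in the sense of \eqref{srri}. Setting $\mathcal{A}_{ijkl} := (\nabla_i T)_{jkl} - (\nabla_j T)_{ikl} + \sigma^T_{ijkl}$, the instanton condition for $R^h$ reduces to the requirement that, for every pair $(i,j)$, the 2-form $\mathcal{A}_{ij\,\cdot\cdot}$ lies in $\Lambda^2_{21}$. The key algebraic identity, which follows from \eqref{dh} applied twice together with $dT = d^{\nabla}T + 2\sigma^T$, is the pair-swap relation
\[
\mathcal{A}_{ijkl} + \mathcal{A}_{klij} = dT_{ijkl}.
\]

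For the sufficient direction ($dT = 0 \Longrightarrow R^h$ instanton), this identity gives $\mathcal{A} = -\mathcal{A}^\top$, and I would verify directly, using the $Spin(7)$-algebraic identities \eqref{p1} together with the explicit torsion formula \eqref{storcy2}, that the $\Lambda^2_7$-projection of $\mathcal{A}_{ij\,\cdot\cdot}$ vanishes. For the necessary direction, the instanton condition combined with the pair-swap identity forces $dT = \mathcal{A} + \mathcal{A}^\top$ to lie in $\Lambda^2\otimes\Lambda^2_{21} + \Lambda^2_{21}\otimes\Lambda^2$, which by Proposition~\ref{427} kills the $\Lambda^4_1 \oplus \Lambda^4_{27}$ components of $dT$. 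To remove the remaining $\Lambda^4_7 \oplus \Lambda^4_{35}$ components, I would parallel the argument for Theorem~\ref{main2}: contract the second Bianchi identity for $\nabla^h$ (with torsion $-T$) with $T$, combine with the contracted Bianchi \eqref{e1} and the Laplacian identity \eqref{lapt}, and arrive at an inequality of the form $\Delta\|T\|^2 + (\text{drift}) \leq -c\|dT\|^2$. Compactness of $M$ and the strong maximum principle then force $dT = 0$.

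The main obstacle is the necessary direction: pointwise algebra alone constrains only part of $dT$, and the global integration argument requires carefully adapting the Bianchi manipulations to the Hull connection. This is delicate because $\nabla^h$ does not preserve $\ps$, so the analogs of \eqref{spbi1}--\eqref{spbi2} used to control $R^h\cdot\ps$ must be established independently of the $\nabla$-computations, relying on the explicit formula for $R^h - R$ above.
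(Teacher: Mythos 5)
Your pair--swap identity $\mathcal{A}_{ijkl}+\mathcal{A}_{klij}=dT_{ijkl}$ is correct and is exactly the paper's formula \eqref{hust} in disguise, but the argument breaks at the very next step: the claim that $\sb\ps=0$ makes $R$ a $Spin(7)$-instanton in the sense of \eqref{srri} is false. The condition $\sb\ps=0$ only forces the \emph{endomorphism} part of $R$ into $\frak{spin}(7)$, i.e. $R_{ijab}\ps_{abkl}=2R_{ijkl}$ (contraction over the \emph{second} pair), whereas \eqref{srri} constrains the \emph{form} part, $R_{abij}\ps_{abkl}=2R_{klij}$ (contraction over the \emph{first} pair). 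The two agree only when $R\in S^2\Lambda^2$, equivalently $\sb T\in\Lambda^4$ by \eqref{4form}; indeed the nontriviality of the instanton condition for $\sb$ itself is the entire content of Theorems~\ref{main2}--\ref{mainsp}. Consequently your reduction of the hypothesis to ``$\mathcal{A}_{ij\cdot\cdot}\in\Lambda^2_{21}$'' does not follow, and your component count is also off: what the hypothesis actually yields (by tracing \eqref{hust} against $\ps$, using the holonomy condition on $R$, the instanton condition on $R^h$, and the fact that the trace of \eqref{hust} gives $Ric_{ij}=Ric^h_{ji}$) is $dT_{iabc}\ps_{jabc}=0$, i.e. $dT\in\Lambda^4_{27}$ by Proposition~\ref{427} --- so the $\Lambda^4_1\oplus\Lambda^4_7\oplus\Lambda^4_{35}$ components die, not the $\Lambda^4_1\oplus\Lambda^4_{27}$ components as you assert.

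The second, decisive gap is the global step. Contracting the second Bianchi identity with $T$ and running a maximum principle on $\|T\|^2$, as in the proof of Theorem~\ref{main2}, produces $-2\|\sb T\|^2$ on the right--hand side; nothing in that machinery produces a term $-c\|dT\|^2$, so there is no reason the proposed inequality exists. The idea you are missing is much simpler: once $dT\in\Lambda^4_{27}\subset\Lambda^4_+$, the 4--form $dT$ is self--dual, hence $\delta dT=-*d*dT=-*d(dT)=0$, and integrating $g(\delta dT,T)$ over the compact manifold gives $\int_M\|dT\|^2\,vol=0$, i.e. $dT=0$. No Bochner formula, no maximum principle, and no Bianchi identity for $\sb^h$ is needed. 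The sufficient direction is likewise immediate from \eqref{hust}: $dT=0$ gives $R^h_{abij}=R_{ijab}$, and the $(a,b)$--form on the right lies in $\frak{spin}(7)$ by holonomy of $\sb$, so $R^h$ satisfies \eqref{srri}; checking that the $\Lambda^2_7$--projection of $\mathcal{A}$ alone vanishes is not the statement required and is a detour that need not succeed.
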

\begin{proof}
We start with the general well-known formula for the curvatures of two metric connections with totally skew-symmetric torsion $T$ and $-T$, respectively, see e.g. \cite{MS}, which applied to the curvatures of the characteristic connection and the $Spin(7)$-Hull connection reads
\begin{equation}\label{hust}
R(X,Y,Z,V)-R^h(Z,V,X,Y)=\frac12dT(X,Y,Z,V).
\end{equation}
If $dT=0$ the result was   observed  in \cite{MS}. Indeed, in this case  the $Spin(7)$-Hull connection is a $Spin(7)$ instanton since $\sb\ps=0$ and the holonomy group of $\sb$ is contained in the Lie algebra $\frak{spin}(7)$ \cite{MS}.

For the converse, \eqref{hust} yields
\begin{equation}\label{huin1}
\begin{split}
dT_{iabc}\ps_{jabc}=R_{iabc}\ps_{jabc}+R^h_{bcai}\ps_{jabc}=2R_{iaja}+2R^h_{jaai}=-2Ric_{ij}+2Ric^h_{ji}=0,
\end{split}
\end{equation}
where $Ric^h$ is the Ricci tensor of the $Spin(7)$-Hull connection and  the trace of \eqref{hust} gives $Ric(X,V)-Ric^h(V,X)=0.$
The identity \eqref{huin1} shows that the 4-form $dT\in\Lambda^4_{27}$ by Proposition~\ref{427} and, in particular, it is self-dual, $*dT=dT$. Therefore, we have 
\[\delta dT=-*d**dT=-*d^2T=0. \]
Multiply with $T$ and integrate over the compact space we obtain 
\[0=\frac1{24}\int_Mg(\delta dT,T)vol = \frac1{24}\int_M||dT||^2vol.\]
Hence, $dT=0$.
\end{proof}

If the torsion is closed then the $Spin(7)$ manifold is said to be \emph{strong}.
\begin{thrm}\label{sp7clost}
Let $(M,\Psi)$ be a  compact  $Spin(7)$ manifold with  closed torsion. 

The next conditions are equivalent:
\begin{itemize}
\item[a)] The torsion is $\sb$-parallel.
\item[b)] The curvature of $\sb$ is  a $Spin(7)$-instanton.
\end{itemize}
\end{thrm}
\begin{proof}
If $\sb T=0$ then $R\in S^2\Lambda^2$ by \cite[Lemma~3.4]{I} which shows that $R$ is a $Spin(7)$-instanton.

For the converse, let $R$ be a $Spin(7)$-instanton. Then \eqref{sbi24} holds true.

We multiply \eqref{sbi24} with $T_{plm}$, using \eqref{1bi1}  and $dT=0$ to calculate 
\begin{multline}\label{bi25cl}
0=3\Big[\sb_iR_{iplm}-\frac76\theta_iR_{iplm}\Big]T_{plm}=T_{plm}\sb_i\sb_iT_{plm}
-\frac76\theta_i\sb_iT_{plm}T_{plm}\\=T_{plm}\sb_i\sb_iT_{plm}-\frac7{12}\sb_{\theta}||T||^2.
\end{multline}
A substitution of \eqref{lapt} into \eqref{bi25cl} yields that \eqref{slap} holds true.
Since $M$ is compact  we may apply  the strong maximum principle to \eqref{lap} (see e.g. \cite{YB,GFS}) to achieve  $\sb T=0$ which completes the proof of the theorem.
\end{proof}

\vskip 0.5cm
\noindent{\bf Acknowledgements} \vskip 0.1cm
\noindent 
\noindent 
The research of S.I. and A.P.  is partially  
supported   by Contract KP-06-H72-1/05.12.2023 with the National Science Fund of Bulgaria and  Contract 80-10-61 / 27.05.2025   with the Sofia University "St.Kl.Ohridski". 
L.U. is partially supported by grant PID2023-148446NB-I00, funded by MICIU/AEI/10.13039/501100011033, and by grant E22-23R ``Algebra y Geometr\'ia'' (Gobierno de Arag\'on/FEDER).

\vskip 0.5cm


\end{document}